\crefname{section}{Sect.}{Sect. }
\Crefname{section}{\textsection}{\textsection}
\numberwithin{equation}{section}
\theoremstyle{plain}
\newtheorem{theorem}{Theorem}[section]
\newtheorem{theoremletter}{Theorem}
\newtheorem{lemma}[theorem]{Lemma}
\newtheorem{lemmaletter}[theoremletter]{Lemma}
\theoremstyle{remark}
\newtheorem{remark}[theorem]{Remark}
\newtheorem{Claim}{Claim}
\theoremstyle{definition}
\newtheorem{definition}[theorem]{Definition}
\newcommand{\leqnomode}{\tagsleft@true}
\newcommand{\reqnomode}{\tagsleft@false}
\title[Quasilinear Lane-Emden type systems]
{Quasilinear Lane-Emden type systems with sub-natural growth terms}
\author[E. da Silva]{Estevan Luiz da Silva} 
\author[J. M.\ do \'O]{Jo\~ao Marcos do \'O*}
\address[E. da Silva]{Department of Mathematics,
	%\newline\indent 
	Federal University of Pernambuco
	\newline\indent
	50740-540, Recife - PE, Brazil}
\email{\href{mailto:estevan.luiz@ufpe.br}{estevan.luiz@ufpe.br}}
\address[J.M. do \'O]{Department of Mathematics,
%\newline\indent 
Federal University of Para\'{\i}ba
\newline\indent
58051-900, Jo\~ao Pessoa-PB, Brazil}
\email{\href{mailto:jmbo@pq.cnpq.br}{jmbo@pq.cnpq.br}}
\thanks{* Corresponding author.}
\subjclass[2000]{35J70, 45G15, 35B09, 35B45, 35J92, 35R11}
\keywords{Nonlinear elliptic equations, Wolff potentials, $p$-Laplacian, Fractional Laplacian}
\begin{document}

%%%%%%%%%%%%%%%%%%%%%%%%%%%%%%%%%%%%%%%%%%%%%%%%%%%%%%%%%%%%%%%%%%%%%%%%%%%%%%%%%%%%%%%%%%%%%%%%%%%%%%%%%%%%%%%%%%%%%%%%%%%%%%%%%%%%
%
%                               ABSTRACT
%
%%%%%%%%%%%%%%%%%%%%%%%%%%%%%%%%%%%%%%%%%%%%%%%%%%%%%%%%%%%%%%%%%%%%%%%%%%%%%%%%%%%%%%%%%%%%%%%%%%%%%%%%%%%%%%%%%%%%%%%%%%%%%%%%%%%%

\begin{abstract} 
Global pointwise estimates are obtained for quasilinear Lane-Emden-type systems involving measures in the ``sublinear growth'' rate. We give necessary and sufficient conditions for existence expressed in terms of Wolff's potential. Our approach is based on recent advances due to Kilpel\"{a}inen and Mal\'{y} in the potential theory. This method enables us to treat several problems, such as equations involving general quasilinear operators and fractional Laplacian.
\end{abstract}

\maketitle

%%%%%%%%%%%%%%%%%%%%%%%%%%%%%%%%%%%%%%

%\bigskip
%\begin{center}
%\begin{minipage}{8cm}
%\footnotesize
%\tableofcontents
%\pagebreak
%\end{minipage}
%\end{center}

%%%%%%%%%%%%%%%%%%%%%%%%%%%%%%%%%%%%%%

%%%%%%%%%%%%%%%%%%%%%%%%%%%%%%%%%%%%%%%%%%%%%%%%%%%%%%%%%%%%%%%%%%%%%%%%%%%%%%%%%%%%%%%%%%%%%%%%%%%%%%%%%%%%%%%%%%%%%%%%%%%%%%%%%%%%%%%%%%%%%%%%%%%%%%%%%%%%%%%%%%%%%%%%%%
%
%                                               INTRODUCTION
%
%%%%%%%%%%%%%%%%%%%%%%%%%%%%%%%%%%%%%%%%%%%%%%%%%%%%%%%%%%%%%%%%%%%%%%%%%%%%%%%%%%%%%%%%%%%%%%%%%%%%%%%%%%%%%%%%%%%%%%%%%%%%%%%%%%%%%%%%%%%%%%%%%%%%%%%%%%%%%%%%%%%%%%%%%%

\section{Introduction}\label{intro}
%%%%%%%%%%%%%%%%%%%%%%%%%%%%%%%%%%%%%%%%%%%%%%%%%%%%%%%%%%%%%%%%%%%%%%%%%%%%%%%%%%%%%%%%%%%%%%%%%%%%%%%%%%%%%%%%%%%%%%%%%%%%%%%%%%%%%%%%%%%%%%%%%%%%%%%%%%%%%%%%%%%%%%%%%%
	%  SYSTEM OF QUASILINEAR ELLIPTIC EQUATIONS
%%%%%%%%%%%%%%%%%%%%%%%%%%%%%%%%%%%%%%%%%%%%%%%%%%%%%%%%%%%%%%%%%%%%%%%%%%%%%%%%%%%%%%%%%%%%%%%%%%%%%%%%%%%%%%%%%%%%%%%%%%%%%%%%%%%%%%%%%%%%%%%%%%%%%%%%%%%%%%%%%%%%%%%%%%
In this work, we give necessary and sufficient conditions for the existence of distributional solutions for systems of the Lane-Emden type, including the following model problem for quasilinear elliptic equations:	
\begin{equation}\label{plaplaciansystem}\tag{$S$}
     \left\{
\begin{aligned}
& -\Delta_p u= \sigma\, v^{q_1}, & u>0 \quad \mbox{in}\quad \mathds{R}^n,&\\ 
& -\Delta_p v= \sigma\, u^{q_2}, & v>0 \quad \mbox{in}\quad \mathds{R}^n,&\\
& \varliminf_{|x|\to \infty} u(x)=0, & \varliminf_{|x|\to \infty} v(x)=0, &
\end{aligned}
\right.
 \end{equation}
where $n\geq 3$, $\Delta_p f=\mathrm{div}(|\nabla f|^{p-2}\nabla f)$ is the $p$-Laplacian, $1<  p<\infty$ and $\sigma$ belongs to  $M^+(\mathds{R}^n)$, the set of all nonnegative Radon measures on $\mathds{R}^n$. We are interested in studying Syst.~\eqref{plaplaciansystem} for the sub-natural growth, that is, the case $ \, 0<q_i<p-1$ for $ i=1,\, 2.$

%%%%%%%%%%%%%%%%%%%%%%%%%%%%%%%%%%%%%%%%%%%%%%%%%%%%%%%%%%%%%%%%%%%%%%%%%%%%%%%%%%%%%%%%%%%%%%%%%%%%%%%%%%%%%%%%%%%%%%%%%%%%%%%%%%%%%%%%%%%%%%%%%%%%%%%%%%%%%%%%%%%%%%%%%%
	%  RELATED RESULTS
%%%%%%%%%%%%%%%%%%%%%%%%%%%%%%%%%%%%%%%%%%%%%%%%%%%%%%%%%%%%%%%%%%%%%%%%%%%%%%%%%%%%%%%%%%%%%%%%%%%%%%%%%%%%%%%%%%%%%%%%%%%%%%%%%%%%%%%%%%%%%%%%%%%%%%%%%%%%%%%%%%%%%%%%%%
For a single semilinear elliptic equation that corresponds to  $p=2$, 
the sublinear problem was studied in \cite{MR1141779}, where the authors present a necessary and sufficient condition for the existence and uniqueness of \textit{bounded} solutions.
Precisely, they considered the equation $-\Delta u = \sigma(x) u^q$ in $\mathds{R}^n (n \geq 3)$  with $0<q<1, \; 0 \not \equiv  \sigma \geq 0$ and $\sigma \in L^{\infty}_{\mathrm{loc}}(\mathds{R})$. Using the Green function of the Laplacian in balls of $\mathds{R}^n$, they proved global pointwise estimates for bounded solutions of the semilinear elliptic equation  of the form 
\begin{equation}\label{BK estimate}
    c^{-1}\left(\mathbf{I}_2\sigma\right)^{\frac{1}{1-q}}\leq u\leq c\,\mathbf{I}_2\sigma,
\end{equation}
where $c>0$ is a constant independent of $u$ and 
\begin{equation*}
\mathbf{I}_2\sigma(x)=\int_{\mathds{R}^n}\frac{\mathrm{d}\sigma(y)}{|x-y|^{n-2}}
\end{equation*}
is the Newtonian potential of $\sigma$. In \cite{MR3556326} were obtained pointwise estimates of {B}rezis-{K}amin type for solutions of quasilinear elliptic equations involving sub-natural growth. See also \cite{MR3311903, MR3567503} for some related results.

%%%%%%%%%%%%%%%%%%%%%%%%%%%%%%%%%%%%%%%%%%%%%%%%%%%%%%%%%%%%%%%%%%%%%%%%%%%%%%%%%%%%%%%%%%%%%%%%%%%%%%%%%%%%%%%%%%%%%%%%%%%%%%%%%%%%%%%%%%%%%%%%%%%%%%%%%%%%%%%%%%%%%%%%%%
	%  Idea of the proof 
%%%%%%%%%%%%%%%%%%%%%%%%%%%%%%%%%%%%%%%%%%%%%%%%%%%%%%%%%%%%%%%%%%%%%%%%%%%%%%%%%%%%%%%%%%%%%%%%%%%%%%%%%%%%%%%%%%%%%%%%%%%%%%%%%%%%%%%%%%%%%%%%%%%%%%%%%%%%%%%%%%%%%%%%%%
Our main goal is to obtain necessary and sufficient conditions on $\sigma$ to prove the existence of distributional solutions to Syst.~\eqref{plaplaciansystem}  satisfying sharp global pointwise estimates of Brezis–Kamin type in terms of the Wolff potential. In our approach, using some ideas developed in \cite{MR3567503, MR3556326, MR2305115}; first, we transform Syst.~\eqref{plaplaciansystem} into a system of integral equations. Then, under suitable assumptions on $\sigma$, we can prove the existence of solutions for this associated system of integral equations based on the method of sub- and super-solutions.   Because of this result,  by using the method of successive approximations, we show the existence of distributional solutions (possibly unbounded) to Syst.~\eqref{plaplaciansystem} jointly with sharp global pointwise bounds of solutions.  Our results are new even for systems of nonlinear elliptic equations involving the classical Laplace operator, corresponding to the case $p=2$; or for nonnegative $\sigma\in L_{\mathrm{loc}}^1(\mathds{R}^n)$, where here $\mathrm{d}\sigma=\sigma\,\mathrm{d}x$.

%%%%%%%%%%%%%%%%%%%%%%%%%%%%%%%%%%%%%%%%%%%%%%%%%%%%%%%%%%%%%%%%%%%%%%%%%%%%%%%%%%%%%%%%%%%%%%%%%%
	%  SYSTEMS OF WOLFF POTENTIALS
%%%%%%%%%%%%%%%%%%%%%%%%%%%%%%%%%%%%%%%%%%%%%%%%%%%%%%%%%%%%%%%%%%%%%%%%%%%%%%%%%%%%%%%%%%%%%%%%%%%%%%%%%%%%%%%%%%%%%%%%%%%%%%%%%%%%%%%%%%%%%%%%%%%%%%%%%%%%%%%%%%%%%%%%%%%%%%%%%%%%%%%%%%%%%%%%%%%%%%%%%%%%%%%%%%%%%%%%%%%%%%%%%%%%%%%%%%%%%%%%%%%%%%%%%%%%%%%%%%%%%%%%%%%%%%%%%%%%%%%%% 
\subsection{Systems of Wolff potentials}
To study systems of quasilinear elliptic equations like \eqref{plaplaciansystem}, we introduce some elements of nonlinear potential theory that will be used throughout the paper. 
To be specific, we consider the Wolff potential $\mathbf{W}_{\alpha,p}\sigma$ defined by
\begin{equation}\label{wolff potential}
    \mathbf{W}_{\alpha,p}\sigma(x)=\int_0^{\infty} \left(\frac{\sigma(B(x,t))}{t^{n-\alpha p}}\right)^{\frac{1}{p-1}}\frac{\mathrm{d} t}{t}, \quad x\in\mathds{R}^n,
\end{equation}
 where $\sigma\in {M}^+(\mathds{R}^n), \; 1<p<\infty, \; 0<\alpha<{n}/{p}$
 and $B(x,t)$ is the open ball of radius $t>0$ centered at $x$.  
 Observe that $\mathbf{W}_{\alpha,p}\sigma$ is always positive for $\sigma\not\equiv 0$ which may be $\infty$.
 
To solve Syst.~\eqref{plaplaciansystem}, first 
we study in a general framework the following system of integral equations involving the Wolff potentials,
\begin{equation}\label{sistemawolff}\tag{$SI$}
    \left\{
\begin{aligned}
& u= \mathbf{W}_{\alpha,p}\left(v^{q_1}\mathrm{d} \sigma\right), \quad \mathrm{d}\sigma \mbox{-a.e in } \mathds{R}^n,\\
& v= \mathbf{W}_{\alpha,p}\left(u^{q_2}\mathrm{d} \sigma\right), \quad \mathrm{d}\sigma \mbox{-a.e in } \mathds{R}^n,
\end{aligned}
\right.
\end{equation}
where $u\in L_{\mathrm{loc}}^{q_2}(\mathds{R}^n,\, \mathrm{d} \sigma)$ and $v\in L_{\mathrm{loc}}^{q_1}(\mathds{R}^n,\, \mathrm{d}\sigma)$ are nonnegative functions. We prove the existence of solutions to Syst.~\eqref{sistemawolff} by using the sub and super solutions method. 
Indeed, solutions of \eqref{sistemawolff} work like an upper barrier, which will allow us to control the successive approximations.

In the context of quasilinear problems, Wolff potentials with $\alpha=1$ appeared in the celebrated works of T. Kilpel\"{a}inen and J. Mal\'{y} \cite{MR1264000, MR1205885}. A global version of one of their main theorems states that if $u$ is a solution understood in the potential theoretic sense of the equation
\begin{equation}\label{kilpelainenmalequation}
     \left\{
\begin{aligned}
& -\Delta_p u= \sigma, \quad u>0 \quad \mbox{in}\quad \mathds{R}^n,\\
& \quad \varliminf_{|x|\to \infty}u(x)=0,
\end{aligned}
\right.
\end{equation}
then there exists a constant $K\geq 1$ which depends only on $p$ and $n$ such that
\begin{equation}\label{potential estimate}
    K^{-1}\mathbf{W}_{1,p} \sigma(x) \leq u(x) \leq K\,\mathbf{W}_{1,p}\sigma(x), \quad x\in \mathds{R}^n,
\end{equation}
see definition in Sect. \ref{ preliminaries}. 
    Moreover, $u$ exists if and only if $\mathbf{W}_{1,p}\sigma \not\equiv \infty$, or equivalently, 
\begin{equation}\label{potencial finito}
    \int_1^{\infty} \left(\frac{\sigma(B(0,t))}{t^{n- p}}\right)^{\frac{1}{p-1}}\, \frac{\mathrm{d} t}{t}<\infty.
\end{equation}
For an overview of Wolff potentials and their applications in Analysis and PDE, see \cite{MR1411441, MR727526, MR3174278, MR2777530, MR2305115} and references therein.

%%%%%%%%%%%%%%%%%%%%%%%%%%%%%%%%%%%%%%%%%
%%%%%%%%%%%%%%%%%%%%%%%%%%%%%%%%%%%%%%%%%
	%  MAIN RESULTS
%%%%%%%%%%%%%%%%%%%%%%%%%%%%%%%%%%%%%%%%%
%%%%%%%%%%%%%%%%%%%%%%%%%%%%%%%%%%%%%%%%%

\subsection{Main results} For the next two theorems, we assume that $\mathbf{W}_{\alpha,p}\sigma(x)\not\equiv\infty$ for almost every $x\in \mathds{R}^n$, or equivalently, by \cite[Corollary~3.2]{MR3567503}
    \begin{equation}\label{general wolff finite}
        \int_1^\infty\left(\frac{\sigma(B(0,t))}{t^{n-\alpha p}}\right)^{\frac{1}{p-1}}\frac{\mathrm{d} t}{t}<\infty.
    \end{equation}
This means that $\mathbf{W}_{\alpha,p}\sigma(x)<\infty$ for almost every $x\in \mathds{R}^n$ if and only if $\mathbf{W}_{\alpha,p}\sigma(x_0)<\infty$ for some $x_0\in \mathds{R}^n$.

To state our first result in a precise way, let us recall the notion of $(\alpha, p)$-capacity of a compact set $E\subset\mathds{R}^n,$
\begin{equation}\label{capa}
    \mathrm{cap}_{\alpha,p}(E)=\inf \{\|f\|_{L^{p}}^p: 
    f \in L^{p}(\mathds{R}^n), \;  f \geq 0, \; \mathbf{I}_{\alpha}f\geq 1\ \mbox{on}\ E\},
\end{equation}
where $\mathbf{I}_{\alpha}\sigma$ is the Riesz potential of order $\alpha$ defined for $0<\alpha< n$ by
\begin{equation}\label{RieszPotential}
    \mathbf{I}_{\alpha}\sigma(x)=\int_{\mathds{R}^n}\frac{\mathrm{d}\sigma(y)}{|x-y|^{n-\alpha}}, \quad x\in \mathds{R}^n.
\end{equation}
We will show in Lemma \ref{Lemma sigma abs cont} below that if there exists a nontrivial solution to Syst. \eqref{sistemawolff}, then $\sigma$  must be absolutely continuous with respect to $\mathrm{cap}_{\alpha,p}$. Indeed, for our main results, we will impose
that $\sigma$ satisfies the following strongest condition
\begin{equation}\label{sigma abscont alfa p capacidade}
           \sigma(E)\leq C_{\sigma}\,\mathrm{cap}_{\alpha,p}(E) \quad \mbox{for all compact sets } E\subset \mathds{R}^n.
\end{equation}

%%%%%%%%%%%%%%%%%%%%%%%%%%%%%%%%%%%%%%%%%%%%%%%%%%%%%%%%%%%%%%%%
\begin{theorem}\label{existencewollfsystem}
Let $1<p<\infty$, $0<q_i<p-1$, $i=1,2, \; 0<\alpha<{n}/{p}$ and $\sigma\in  {M}^+(\mathds{R}^n)$ satisfying \eqref{general wolff finite} and \eqref{sigma abscont alfa p capacidade}. Then there exists a solution $(u,v)$ to Syst. \eqref{sistemawolff} such that 
\begin{equation}\label{estimate upper and lower}
    \begin{aligned}
    & c^{-1}\left(\mathbf{W}_{\alpha,p}\sigma\right)^{\frac{(p-1)(p-1+q_1)}{(p-1)^2-q_1q_2}}\leq u\leq c\left(\mathbf{W}_{\alpha,p}\sigma + \left(\mathbf{W}_{\alpha,p}\sigma\right)^{\frac{(p-1)(p-1+q_1)}{(p-1)^2-q_1q_2}} \right),\\
   & c^{-1}\left(\mathbf{W}_{\alpha,p}\sigma\right)^{\frac{(p-1)(p-1+q_2)}{(p-1)^2-q_1q_2}}\leq v\leq c\left(\mathbf{W}_{\alpha,p}\sigma + \left(\mathbf{W}_{\alpha,p}\sigma\right)^{\frac{(p-1)(p-1+q_2)}{(p-1)^2-q_1q_2}} \right),
    \end{aligned}
\end{equation}
where $c=c(n,p,q_1,q_2,\alpha,C_{\sigma})>0$. Furthermore, $u,v \in L_{\mathrm{loc}}^{s}(\mathds{R}^n,\, \mathrm{d} \sigma)$, for every $s>0$.
\end{theorem}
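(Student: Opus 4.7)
\medskip

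\textbf{Plan.} The strategy I would follow is the sub- and super-solution scheme sketched in the introduction, built on a scaling ansatz dictated by the exponents
\[
\gamma_1=\frac{(p-1)(p-1+q_1)}{(p-1)^2-q_1q_2}, \qquad \gamma_2=\frac{(p-1)(p-1+q_2)}{(p-1)^2-q_1q_2}.
\]
These are uniquely determined by the formal relations $q_1\gamma_2+(p-1)=(p-1)\gamma_1$ and $q_2\gamma_1+(p-1)=(p-1)\gamma_2$, which are exactly what is needed so that ansätze of the form $(\mathbf{W}_{\alpha,p}\sigma)^{\gamma_i}$ are fixed points of the right-hand sides of \eqref{sistemawolff} under iterated Wolff estimates. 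The sub-natural condition $0<q_i<p-1$ will enter as the contraction ingredient that allows one to absorb multiplicative constants when tuning the sub- and super-solutions.

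\medskip

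\textbf{Super-solution.} I would set $\overline{u}=M\bigl(\mathbf{W}_{\alpha,p}\sigma+(\mathbf{W}_{\alpha,p}\sigma)^{\gamma_1}\bigr)$ and $\overline{v}=M\bigl(\mathbf{W}_{\alpha,p}\sigma+(\mathbf{W}_{\alpha,p}\sigma)^{\gamma_2}\bigr)$, with $M\ge1$ to be fixed. The decisive technical input is the pointwise iterated Wolff inequality
\[
\mathbf{W}_{\alpha,p}\bigl((\mathbf{W}_{\alpha,p}\sigma)^{r}\,\mathrm{d}\sigma\bigr)(x)\le C\,\bigl(\mathbf{W}_{\alpha,p}\sigma(x)\bigr)^{(r+p-1)/(p-1)}\qquad (r>0),
\]
which is valid precisely when $\sigma$ satisfies the capacity bound \eqref{sigma abscont alfa p capacidade} and is of Cao--Verbitsky/Phuc--Verbitsky type; this is where the assumption on $C_\sigma$ is essential. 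Splitting $\overline{v}^{q_1}\le 2^{q_1}M^{q_1}\bigl((\mathbf{W}_{\alpha,p}\sigma)^{q_1}+(\mathbf{W}_{\alpha,p}\sigma)^{q_1\gamma_2}\bigr)$, applying the above to each term, and using the relation defining $\gamma_1$, I would obtain $\mathbf{W}_{\alpha,p}(\overline{v}^{q_1}\mathrm{d}\sigma)\le C\,M^{q_1/(p-1)}\bigl(\mathbf{W}_{\alpha,p}\sigma+(\mathbf{W}_{\alpha,p}\sigma)^{\gamma_1}\bigr)$. Since $q_1/(p-1)<1$, I can choose $M$ large enough so that $CM^{q_1/(p-1)}\le M$, yielding $\mathbf{W}_{\alpha,p}(\overline{v}^{q_1}\mathrm{d}\sigma)\le\overline{u}$; the symmetric inequality is verified in the same way.

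\medskip

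\textbf{Sub-solution and iteration.} For the lower ansatz I would take $\underline{u}=\varepsilon(\mathbf{W}_{\alpha,p}\sigma)^{\gamma_1}$, $\underline{v}=\varepsilon(\mathbf{W}_{\alpha,p}\sigma)^{\gamma_2}$. The matching lower iterated Wolff bound
\[
\mathbf{W}_{\alpha,p}\bigl((\mathbf{W}_{\alpha,p}\sigma)^{r}\,\mathrm{d}\sigma\bigr)(x)\ge c\,\bigl(\mathbf{W}_{\alpha,p}\sigma(x)\bigr)^{(r+p-1)/(p-1)}
\]
follows from a Jensen-type argument on dyadic annuli and does not need \eqref{sigma abscont alfa p capacidade}; combined with the defining identity for $\gamma_i$ it gives $\mathbf{W}_{\alpha,p}(\underline{v}^{q_1}\mathrm{d}\sigma)\ge c\,\varepsilon^{q_1/(p-1)}(\mathbf{W}_{\alpha,p}\sigma)^{\gamma_1}$, which is $\ge\underline{u}$ provided $\varepsilon$ is chosen small (again using $q_i<p-1$). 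Starting from $(u_0,v_0)=(\underline{u},\underline{v})$ I would define the monotone scheme $u_{k+1}=\mathbf{W}_{\alpha,p}(v_k^{q_1}\mathrm{d}\sigma)$, $v_{k+1}=\mathbf{W}_{\alpha,p}(u_k^{q_2}\mathrm{d}\sigma)$. Interlacing the two components, an induction shows $\underline{u}\le u_k\le u_{k+1}\le\overline{u}$ and analogously for $v_k$; by monotone convergence the limit $(u,v)$ solves \eqref{sistemawolff} $\mathrm{d}\sigma$-a.e.\ and satisfies \eqref{estimate upper and lower}.

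\medskip

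\textbf{Integrability and main obstacle.} The claim $u,v\in L^{s}_{\mathrm{loc}}(\mathbb{R}^n,\mathrm{d}\sigma)$ for every $s>0$ would be reduced via \eqref{estimate upper and lower} to the self-improving integrability $(\mathbf{W}_{\alpha,p}\sigma)^{s}\in L^{1}_{\mathrm{loc}}(\mathrm{d}\sigma)$, which is a known consequence of the capacity hypothesis (Muckenhoupt--Wheeden / good-$\lambda$ type argument). The main obstacle is unquestionably the upper iterated Wolff inequality used in the super-solution step: it is the nontrivial assertion that distinguishes the case where the capacity condition \eqref{sigma abscont alfa p capacidade} is imposed from the general measure setting, and everything else in the proof is a matter of choosing constants and running monotone iteration once that inequality is available.
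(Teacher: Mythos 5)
Your overall architecture (sub/supersolution ansätze $\varepsilon(\mathbf{W}_{\alpha,p}\sigma)^{\gamma_i}$ and $M(\mathbf{W}_{\alpha,p}\sigma+(\mathbf{W}_{\alpha,p}\sigma)^{\gamma_i})$, the exponent relations $q_1\gamma_2/(p-1)+1=\gamma_1$, $q_2\gamma_1/(p-1)+1=\gamma_2$, monotone iteration, and reduction of the $L^s_{\mathrm{loc}}(\mathrm{d}\sigma)$ claim to integrability of powers of $\mathbf{W}_{\alpha,p}\sigma$) is exactly the paper's route, and the subsolution step via the lower iterated bound (Lemma~\ref{estimativekappa}) is correct. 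However, the supersolution step rests on a key inequality that is false as you state it: the pure-power bound $\mathbf{W}_{\alpha,p}\bigl((\mathbf{W}_{\alpha,p}\sigma)^{r}\,\mathrm{d}\sigma\bigr)\le C\,(\mathbf{W}_{\alpha,p}\sigma)^{(r+p-1)/(p-1)}$ does \emph{not} hold under \eqref{sigma abscont alfa p capacidade}. Take, e.g., $\sigma$ equal to Lebesgue measure on the unit ball (which satisfies \eqref{sigma abscont alfa p capacidade} by the isocapacitary inequality): then $\mathbf{W}_{\alpha,p}\sigma\approx 1$ on the support, so $(\mathbf{W}_{\alpha,p}\sigma)^{r}\mathrm{d}\sigma\approx\mathrm{d}\sigma$ and the left-hand side decays like $|x|^{-(n-\alpha p)/(p-1)}$ at infinity, whereas your right-hand side decays like $|x|^{-(n-\alpha p)(r+p-1)/(p-1)^2}$, which is strictly faster; the inequality fails for $|x|$ large. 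The correct statement available under the capacity condition is the \emph{additive} bound $\mathbf{W}_{\alpha,p}\bigl((\mathbf{W}_{\alpha,p}\sigma)^{r}\,\mathrm{d}\sigma\bigr)\le C\bigl(\mathbf{W}_{\alpha,p}\sigma+(\mathbf{W}_{\alpha,p}\sigma)^{(r+p-1)/(p-1)}\bigr)$, and this is precisely why the upper estimate in \eqref{estimate upper and lower} carries the extra linear term $\mathbf{W}_{\alpha,p}\sigma$ (in contrast with the Brezis--Kamin bound for bounded data).

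Proving that additive bound is the genuine content of the supersolution step, and it is what you have skipped by citing a ``known'' inequality in the wrong form. The paper obtains it by splitting the inner Wolff integral at radius $t$: for the near part ($r\le t$) one has $B(y,r)\subset B(x,2t)$ and Lemma~\ref{regularidade wolff} (i.e.\ $\int_{E}(\mathbf{W}_{\alpha,p}\sigma_E)^{s}\mathrm{d}\sigma\le c\,\sigma(E)$, the quantitative consequence of \eqref{sigma abscont alfa p capacidade}) gives a contribution $\lesssim\sigma(B(x,2t))$, while the far part ($r\ge t$) is controlled by $\sigma(B(x,t))\,(\mathbf{W}_{\alpha,p}\sigma(x))^{r}$; reinserting these into the outer Wolff integral produces exactly the sum $\mathbf{W}_{\alpha,p}\sigma+(\mathbf{W}_{\alpha,p}\sigma)^{\gamma_1}$ (and similarly with $q_2,\gamma_2$). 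Once this is in hand, your absorption argument using $q_i/(p-1)<1$ to fix $M$ (the paper's $\lambda_2$) goes through verbatim, since $(q_1+p-1)/(p-1)\le\gamma_1$ lets the intermediate power be absorbed into the sum. Finally, note that the $L^s_{\mathrm{loc}}(\mathrm{d}\sigma)$ statement needs \eqref{general wolff finite} in addition to the capacity condition (Lemma~\ref{regularidade solution wolff} uses the tail condition to handle $\mathbf{W}_{\alpha,p}\sigma_{(2B)^c}$), so it is not a consequence of \eqref{sigma abscont alfa p capacidade} alone as your last paragraph suggests.
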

%%%%%%%%%%%%%%%%%%%%%%%%%%%%%%%%%%%%%%%%%%%%%%%%%%%%%%%%%%%%%%%%

\begin{remark}
Based on the assumption \eqref{general wolff finite}, we show that all nontrivial solutions to Syst~\eqref{sistemawolff} satisfy the lower bounds in \eqref{estimate upper and lower}. 
For the upper bounds in \eqref{estimate upper and lower}, we also use hypothesis \eqref{sigma abscont alfa p capacidade}, which will be decisive in building our argument. 
\end{remark}

For the next result, we assume the following condition on $\sigma$,
\begin{equation}\label{weakercontidion}
\begin{aligned}
&    \mathbf{W}_{\alpha,p}\left(\left(\mathbf{W}_{\alpha,p}\sigma\right)^{\frac{(p-1)(p-1+q_1)q_2}{(p-1)^2-q_1q_2}} \mathrm{d}\sigma\right)\leq \lambda \left(\mathbf{W}_{\alpha,p}\sigma + \left(\mathbf{W}_{\alpha,p}\sigma\right)^{\frac{(p-1)(p-1+q_1)}{(p-1)^2-q_1q_2}} \right), \\
& \mathbf{W}_{\alpha,p}\left(\left(\mathbf{W}_{\alpha,p}\sigma\right)^{\frac{(p-1)(p-1+q_2)q_1}{(p-1)^2-q_1q_2}}\mathrm{d} \sigma\right)\leq \lambda \left(\mathbf{W}_{\alpha,p}\sigma + \left(\mathbf{W}_{\alpha,p}\sigma\right)^{\frac{(p-1)(p-1+q_2)}{(p-1)^2-q_1q_2}} \right),
\end{aligned}
 \end{equation}
where the right-hand sides are finite almost everywhere in $\mathds{R}^n$, and $\lambda$ is a positive parameter. In general,  condition \eqref{weakercontidion} is weaker than capacity condition \eqref{sigma abscont alfa p capacidade} (see \cite{MR3556326} for more details). However, we can still construct solutions for Syst. \eqref{sistemawolff} satisfying \eqref{estimate upper and lower} and show that \eqref{weakercontidion} is necessary for the existence of such solutions.

%%%%%%%%%%%%%%%%%%%%%%%%%%%%%%%%
%%%%%%%%%%%%%%%%%%%%%%%%%%%%%%%%
\begin{theorem}\label{existencewolffsystemlocweaker}
Let $1<p<\infty$, $0<q_i<p-1$, $i=1, 2$, and $0<\alpha<{n}/{p}$. If $\sigma\in  {M}^+(\mathds{R}^n)$ satisfies \eqref{general wolff finite} and \eqref{weakercontidion}, then there exists a solution $(u,v)$ to 
Syst.~\eqref{sistemawolff} such that \eqref{estimate upper and lower} holds with a positive constant $c=c(n,p,\alpha,q_1,q_2,\lambda)$. Conversely, suppose that there exists a nontrivial solution $(u,v)$ to Syst.~\eqref{sistemawolff} satisfying \eqref{estimate upper and lower}. Then \eqref{weakercontidion} holds with $\lambda$ depending only on $p,q_1,q_2$ and $c$.
\end{theorem}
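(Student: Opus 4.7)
My plan is to split the proof into two parts. Necessity is a direct substitution, while existence uses the method of sub- and super-solutions with \eqref{weakercontidion} as the key input.

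For necessity, let $(u,v)$ be a nontrivial solution of Syst.~\eqref{sistemawolff} satisfying \eqref{estimate upper and lower}, and abbreviate $\beta_i=\frac{(p-1)(p-1+q_i)}{(p-1)^2-q_1q_2}$. Inserting the lower bound $v\ge c^{-1}(\mathbf{W}_{\alpha,p}\sigma)^{\beta_2}$ into the integral representation $u=\mathbf{W}_{\alpha,p}(v^{q_1}\,\mathrm{d}\sigma)$ and combining with the upper bound on $u$ from \eqref{estimate upper and lower} yields
\[
\mathbf{W}_{\alpha,p}\bigl((\mathbf{W}_{\alpha,p}\sigma)^{\beta_2 q_1}\,\mathrm{d}\sigma\bigr)\le c^{q_1}u\le c^{q_1+1}\bigl(\mathbf{W}_{\alpha,p}\sigma+(\mathbf{W}_{\alpha,p}\sigma)^{\beta_1}\bigr),
\]
and the symmetric manipulation of the $v$-equation yields the companion inequality. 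Together these give \eqref{weakercontidion} with $\lambda=\lambda(c,q_1,q_2)$.

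For existence I will build a super-solution, a sub-solution with $u_0\le U$, $v_0\le V$, and iterate in between. Take the natural candidates
\[
U = A\bigl(\mathbf{W}_{\alpha,p}\sigma+(\mathbf{W}_{\alpha,p}\sigma)^{\beta_1}\bigr),\qquad V = A\bigl(\mathbf{W}_{\alpha,p}\sigma+(\mathbf{W}_{\alpha,p}\sigma)^{\beta_2}\bigr),
\]
and verify $\mathbf{W}_{\alpha,p}(V^{q_1}\,\mathrm{d}\sigma)\le U$ and $\mathbf{W}_{\alpha,p}(U^{q_2}\,\mathrm{d}\sigma)\le V$ for $A$ large. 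The strategy is: expand $V^{q_1}\le 2^{q_1}A^{q_1}\bigl((\mathbf{W}_{\alpha,p}\sigma)^{q_1}+(\mathbf{W}_{\alpha,p}\sigma)^{\beta_2 q_1}\bigr)$, apply the quasi-subadditivity of $\mathbf{W}_{\alpha,p}$ (which follows from $(a+b)^{1/(p-1)}\le C(p)(a^{1/(p-1)}+b^{1/(p-1)})$), control the main piece $\mathbf{W}_{\alpha,p}((\mathbf{W}_{\alpha,p}\sigma)^{\beta_2 q_1}\,\mathrm{d}\sigma)$ directly by \eqref{weakercontidion}, and handle the leftover $\mathbf{W}_{\alpha,p}((\mathbf{W}_{\alpha,p}\sigma)^{q_1}\,\mathrm{d}\sigma)$ by splitting $\mathds{R}^n$ into the region where $\mathbf{W}_{\alpha,p}\sigma\ge 1$ (on which $(\mathbf{W}_{\alpha,p}\sigma)^{q_1}\le(\mathbf{W}_{\alpha,p}\sigma)^{\beta_2 q_1}$ since $\beta_2>1$) and its complement (on which $(\mathbf{W}_{\alpha,p}\sigma)^{q_1}\le 1$ so the contribution is dominated by $\mathbf{W}_{\alpha,p}\sigma$, already a summand of $U$). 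Choosing $A$ large in terms of $\lambda,p,q_1,q_2,C(p)$ closes both super-solution inequalities. For the sub-solution I set $u_0=\epsilon(\mathbf{W}_{\alpha,p}\sigma)^{\beta_1}$, $v_0=\epsilon(\mathbf{W}_{\alpha,p}\sigma)^{\beta_2}$ and invoke the classical reverse Wolff estimate $\mathbf{W}_{\alpha,p}((\mathbf{W}_{\alpha,p}\sigma)^s\,\mathrm{d}\sigma)\ge c_s(\mathbf{W}_{\alpha,p}\sigma)^{1+s/(p-1)}$; combined with the algebraic identity $1+\beta_j q_i/(p-1)=\beta_i$ (for $j\ne i$), this gives the sub-solution property for $\epsilon$ chosen appropriately, with $A$ then taken at least $\epsilon$ to ensure $u_0\le U$, $v_0\le V$.

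Finally, iterate $(u_{n+1},v_{n+1})=\bigl(\mathbf{W}_{\alpha,p}(v_n^{q_1}\,\mathrm{d}\sigma),\mathbf{W}_{\alpha,p}(u_n^{q_2}\,\mathrm{d}\sigma)\bigr)$ starting from $(u_0,v_0)$. Monotonicity of $\mathbf{W}_{\alpha,p}$ and induction show that $u_n$ and $v_n$ are nondecreasing and dominated by $U$ and $V$; monotone convergence then produces a limit $(u^*,v^*)$ that inherits both barriers, hence satisfies \eqref{estimate upper and lower} with $c=c(n,p,\alpha,q_1,q_2,\lambda)$, and dominated convergence shows that it solves Syst.~\eqref{sistemawolff}. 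I expect the main obstacle to be the super-solution verification: hypothesis \eqref{weakercontidion} directly controls only the \emph{nonlinear} piece of the binomial expansion of $V^{q_1}$, and the \emph{linear} residue $(\mathbf{W}_{\alpha,p}\sigma)^{q_i}$ has to be absorbed via the $\{\mathbf{W}_{\alpha,p}\sigma\gtrless 1\}$ dichotomy so that the $\mathbf{W}_{\alpha,p}\sigma$ summand of $U$ and $V$ can accommodate it.
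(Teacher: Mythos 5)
Your proposal is correct and takes essentially the same route as the paper: necessity by substituting the lower bounds of \eqref{estimate upper and lower} into the integral equations, and existence by sub- and super-solutions with exactly the paper's barriers $\lambda_1(\mathbf{W}_{\alpha,p}\sigma)^{\gamma_i}$ and $\lambda_2\left(\mathbf{W}_{\alpha,p}\sigma+(\mathbf{W}_{\alpha,p}\sigma)^{\gamma_i}\right)$, the reverse estimate of Lemma~\ref{estimativekappa} for the subsolution, and monotone iteration between them. The only deviation is in one step: you absorb the residual term $\mathbf{W}_{\alpha,p}\left((\mathbf{W}_{\alpha,p}\sigma)^{q_i}\,\mathrm{d}\sigma\right)$ via the pointwise dichotomy $\{\mathbf{W}_{\alpha,p}\sigma\gtrless 1\}$, whereas the paper uses H\"{o}lder's and Young's inequalities on ball integrals (estimate \eqref{estimate1}); both reduce to the same two quantities controlled by \eqref{weakercontidion} and by $\mathbf{W}_{\alpha,p}\sigma$, and the only bookkeeping slip in your necessity step is that, by the $1/(p-1)$-homogeneity of $\mathbf{W}_{\alpha,p}$ in the measure, the constants should read $c^{q_1/(p-1)}$ rather than $c^{q_1}$, which still yields $\lambda=\lambda(p,q_1,q_2,c)$ as claimed.
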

%%%%%%%%%%%%%%%%%%%%%%%%%%%%%%%%
%%%%%%%%%%%%%%%%%%%%%%%%%%%%%%%%

\begin{remark}
For the case  $q_1=q_2$,  one can see that the solution $(u,v)$ obtained in Theorem~\ref{existencewollfsystem} is such that $u=v$, and satisfies 
 \begin{equation}\label{estimative for doubleequation}
c^{-1}\big(\mathbf{W}_{\alpha,p}\sigma\big)^{\frac{p-1}{p-1-q}}\leq u\leq c\left(\mathbf{W}_{\alpha,p}\sigma + \left(\mathbf{W}_{\alpha,p}\sigma\right)^{\frac{p-1}{p-1-q}} \right),
  \end{equation}
   where $q:=q_1$. Thus, \cite[Theorem~3.2]{MR3556326}  is a corollary of Theorem \ref{existencewollfsystem} for $q_1=q_2$. Also, \cite[Theorem~3.3]{MR3556326} is a corollary of Theorem~\ref{existencewolffsystemlocweaker}, since in the case $q_1=q_2$ condition \eqref{weakercontidion} is written as follows 
   \begin{equation*}
\mathbf{W}_{\alpha,p}\left(\left(\mathbf{W}_{\alpha,p}\sigma\right)^{\frac{q(p-1)}{p-1-q}} \mathrm{d}\sigma\right)\leq \lambda \left(\mathbf{W}_{\alpha,p}\sigma + \left(\mathbf{W}_{\alpha,p}\sigma\right)^{\frac{p-1}{p-1-q}} \right)<\infty.
   \end{equation*}
\end{remark}
%%%%%%%%%%%%%%%%%%%%%%%%%%%%%%%%
%%%%%%%%%%%%%%%%%%%%%%%%%%%%%%%%
\subsection{Application}
We will apply the preceding theorems to obtain solutions to Syst.~\eqref{plaplaciansystem}.
 For that, we recall the $p$-capacity for compact subsets $E$ of $\mathbb{R}^n,$
 \begin{equation*}%\label{capap}
     \mathrm{cap}_{p}(E)=\inf\left\{\|\nabla \varphi\|_{L^p}^{p}: \varphi\in C_c^{\infty}(\mathds{R}^n), \varphi\geq 1 \,\mbox{on}\, E\right\}.
 \end{equation*}
 We remark that  $\mathrm{cap}_{1,p}(E)\approx \mathrm{cap}_p(E)$ for all compact sets $E$, where the notation $A \approx B$ means that there exists a positive constant $c$ such that  $c^{-1}B \leq A \leq c\, B$ (see \cite{MR1411441}).
 
 We assume that $\sigma$ satisfies the capacity condition
\begin{equation}\label{sigma abs cap_p}
    \sigma(E)\leq C_{\sigma}\,\mathrm{cap}_p(E) \quad \mbox{for all compact sets }E\subset\mathds{R}^n.
\end{equation}
We recall that  $W_{\mathrm{loc}}^{1,p}(\mathds{R}^n)$ is the space of all functions $u \in L_{\mathrm{loc}}^p(\mathds{R}^n)$ which admit weak derivatives $\partial_i u \in L_{\mathrm{loc}}^p(\mathds{R}^n)$ for $i=1,\ldots,n$.

Under assumption \eqref{sigma abs cap_p}, we prove the existence of a minimal positive 
\textit{$p$-superharmonic solution} to Syst.~\eqref{plaplaciansystem}, more precisely, a pair 
$(u,v)\in W_{\mathrm{loc}}^{1,p}(\mathds{R}^n)\times W_{\mathrm{loc}}^{1,p}(\mathds{R}^n)$ such that $u$, $v$ are positive and $p$-superharmonic functions in $\mathds{R}^n$, and satisfies Syst.~\eqref{plaplaciansystem} in the  distributional sense. Here a minimal solution $(u,v)$ to Syst.~\eqref{plaplaciansystem} means that for any solution 
$(\tilde{u},\tilde{v})$ to \eqref{plaplaciansystem}, we have $\tilde{u}\geq u$ and $\tilde{v}\geq v$ almost everywhere in $\mathds{R}^n$. 
See Sect. \ref{ preliminaries} for the definition of solutions to Syst.~\eqref{plaplaciansystem} and $p$-superharmonic functions.
%%%%%%%%%%%%%%%%%%%%%%%%%%%%%%%%
%%%%%%%%%%%%%%%%%%%%%%%%%%%%%%%%
\begin{theorem}\label{solutionplaplacian general}
Let $1<p<n$ and $0<q_i<p-1$ for $i=1,2$.
Suppose $\sigma\in {M}^+(\mathds{R}^n)$ satisfies \eqref{potencial finito} and \eqref{sigma abs cap_p}. Then there exists a minimal $p$-superharmonic positive solution $(u,v)\in W_{\mathrm{loc}}^{1,p}(\mathds{R}^n)\times W_{\mathrm{loc}}^{1,p}(\mathds{R}^n)$ to Syst.~\eqref{plaplaciansystem} such that almost everywhere we have
\begin{equation}\label{estimative upper and lower p-laplacian}
    \begin{aligned}
    & c^{-1}\left(\mathbf{W}_{1,p}\sigma\right)^{\frac{(p-1)(p-1+q_1)}{(p-1)^2-q_1q_2}}\leq u\leq c\left(\mathbf{W}_{1,p}\sigma + \left(\mathbf{W}_{1,p}\sigma\right)^{\frac{(p-1)(p-1+q_1)}{(p-1)^2-q_1q_2}} \right),\\
   & c^{-1}\left(\mathbf{W}_{1,p}\sigma\right)^{\frac{(p-1)(p-1+q_2)}{(p-1)^2-q_1q_2}}\leq v\leq c\left(\mathbf{W}_{1,p}\sigma + \left(\mathbf{W}_{1,p}\sigma\right)^{\frac{(p-1)(p-1+q_2)}{(p-1)^2-q_1q_2}} \right),
    \end{aligned}
\end{equation}
where $c=c(n,p,q_1,q_2,C_{\sigma})>0$. 
If $p\geq n$, there are no nontrivial solutions to Syst.~\eqref{plaplaciansystem} in $\mathds{R}^n$.
\end{theorem}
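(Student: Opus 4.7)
The plan is to combine Theorem~\ref{existencewollfsystem} with the Kilpel\"ainen--Mal\'y theory for the scalar equation \eqref{kilpelainenmalequation} via a monotone successive approximation scheme. Since $\mathrm{cap}_{1,p}\approx \mathrm{cap}_p$, assumption \eqref{sigma abs cap_p} implies \eqref{sigma abscont alfa p capacidade} with $\alpha=1$, and \eqref{potencial finito} is the special case $\alpha=1$ of \eqref{general wolff finite}. Hence Theorem~\ref{existencewollfsystem} produces a pair $(U,V)$ solving the integral system \eqref{sistemawolff} with $\alpha=1$ and satisfying the Wolff potential bounds \eqref{estimate upper and lower}; this pair will serve as the global a priori barrier for the iteration.

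Next I build the approximations. Set $(u_0,v_0)\equiv(0,0)$ and, given $(u_k,v_k)$, define $u_{k+1}$ and $v_{k+1}$ to be the minimal nonnegative $p$-superharmonic solutions in $\mathds{R}^n$ to
\[
-\Delta_p u_{k+1}=v_k^{q_1}\,\mathrm{d}\sigma,\qquad -\Delta_p v_{k+1}=u_k^{q_2}\,\mathrm{d}\sigma,
\]
both with vanishing $\varliminf$ at infinity, as furnished by the global Kilpel\"ainen--Mal\'y existence theorem (solvability being guaranteed once the corresponding Wolff potential is not identically infinite). The two-sided estimate \eqref{potential estimate} then gives at each step $K^{-1}\mathbf{W}_{1,p}(v_k^{q_1}\mathrm{d}\sigma)\leq u_{k+1}\leq K\,\mathbf{W}_{1,p}(v_k^{q_1}\mathrm{d}\sigma)$ and analogously for $v_{k+1}$. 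I will prove by induction, using the integral identities satisfied by $(U,V)$ from Theorem~\ref{existencewollfsystem} together with the sub-natural hypothesis $q_1q_2<(p-1)^2$, that the sequences are monotone nondecreasing and satisfy $u_k\leq M\,U$, $v_k\leq M\,V$ for a constant $M=M(n,p,q_1,q_2,C_\sigma)$. Monotonicity is transmitted stepwise from the comparison principle for $-\Delta_p$, since $0\leq v_{k-1}\leq v_k$ implies $v_{k-1}^{q_1}\,\mathrm{d}\sigma\leq v_k^{q_1}\,\mathrm{d}\sigma$ and analogously in the other component.

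Passing to the limit, set $u=\lim_k u_k$ and $v=\lim_k v_k$. The uniform upper bounds $u\leq M\,U$, $v\leq M\,V$, combined with the standard local regularity theory for $p$-superharmonic functions (see, e.g.,~\cite{MR1411441}), place $u,v\in W_{\mathrm{loc}}^{1,p}(\mathds{R}^n)$ and allow passage to the distributional limit in each scalar equation, producing a $p$-superharmonic solution $(u,v)$ of Syst.~\eqref{plaplaciansystem}. Applying \eqref{potential estimate} to the limiting equations and invoking \eqref{estimate upper and lower} for $\alpha=1$ yields the pointwise estimates \eqref{estimative upper and lower p-laplacian}. Minimality is automatic: any other nonnegative $p$-superharmonic solution $(\tilde u,\tilde v)$ of \eqref{plaplaciansystem} dominates $(u_k,v_k)$ at every stage by inductive use of the comparison principle starting from $(u_0,v_0)=(0,0)$, and hence dominates $(u,v)$ in the limit.

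Finally, the nonexistence claim for $p\geq n$ follows from the behavior of the fundamental solution of $-\Delta_p$ on $\mathds{R}^n$ (which behaves like $|x|^{(p-n)/(p-1)}$ for $p>n$ and like $\log|x|$ for $p=n$), so any nontrivial nonnegative $p$-superharmonic function with $\varliminf_{|x|\to\infty}=0$ must be identically zero; this immediately rules out nontrivial solutions to Syst.~\eqref{plaplaciansystem}. The most delicate step I anticipate is closing the induction $u_k\leq M\,U$, $v_k\leq M\,V$ with a single constant $M$ independent of $k$: this requires calibrating the constant $K$ from \eqref{potential estimate} against the comparability constant $c$ from \eqref{estimate upper and lower}, and it is precisely here that the sub-naturality $q_1q_2<(p-1)^2$ is used to prevent the iteration from diverging.
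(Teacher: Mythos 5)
Your overall strategy (barrier from Theorem~\ref{existencewollfsystem}, successive approximations solving scalar $p$-Laplace equations, monotonicity via comparison) matches the paper's in spirit, but the proposal has a fatal flaw and several glossed-over points.

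\textbf{The initialization $(u_0,v_0)\equiv(0,0)$ kills the iteration.} With $v_0\equiv 0$ and $q_1>0$, the datum $v_0^{q_1}\,\mathrm{d}\sigma$ is the zero measure, so the minimal nonnegative $p$-superharmonic solution of $-\Delta_p u_1=0$ on $\mathds{R}^n$ with $\varliminf_{|x|\to\infty}u_1=0$ is $u_1\equiv 0$ (any nonnegative $p$-harmonic function on all of $\mathds{R}^n$ with vanishing $\varliminf$ at infinity is identically zero by Harnack). The same gives $v_1\equiv 0$, and by induction $(u_k,v_k)\equiv(0,0)$ for all $k$: the scheme never leaves the trivial solution. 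The paper avoids this by starting from an explicit positive subsolution $u_0=\varepsilon(\mathbf{W}_{1,p}\sigma)^{\gamma_1}$, $v_0=\varepsilon(\mathbf{W}_{1,p}\sigma)^{\gamma_2}$ with $\varepsilon>0$ small; this is essential, not a technical convenience. Relatedly, the minimality argument in the paper works precisely because the $\varepsilon$ can be chosen small enough that $(u_0,v_0)$ sits below any nontrivial solution (via the lower bound from Theorem~\ref{constante Maly} and Lemma~\ref{estimativainferior}), not because the iteration starts at zero.

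Two further gaps. First, you invoke a ``comparison principle for $-\Delta_p$'' to transmit monotonicity across iterates on $\mathds{R}^n$, and a ``global Kilpel\"ainen--Mal\'y existence theorem'' for each step. Both of these, on all of $\mathds{R}^n$, are nontrivial: there is no boundary to compare on, and existence requires an exhaustion argument. The paper handles this carefully by first solving on the balls $B_i=B(0,2^i)$ in $W_0^{1,p}(B_i)$ (where the comparison principle is classical), sending $i\to\infty$ using Lemma~\ref{Lemma limit}, the Trudinger--Wang weak continuity (Theorem~\ref{weak continuity p-laplacian}) and Monotone Convergence. Moreover, your worry about tracking a constant $M$ with $u_k\leq MU$, $v_k\leq MV$ is resolved in the paper by applying Theorem~\ref{existencewollfsystem} with $K^{p-1}\sigma$ in place of $\sigma$, so that the barrier $(\tilde u,\tilde v)$ satisfies $\tilde u=K\,\mathbf{W}_{1,p}(\tilde v^{q_1}\mathrm{d}\sigma)$ and the upper bound $u_j^i\leq K\,\mathbf{W}_{1,p}(v_{j-1}^{q_1}\mathrm{d}\sigma)\leq\tilde u$ propagates with no loss; your sketch would otherwise accumulate powers of $K$ at each step, and taming that requires exactly the fixed-point computation you defer. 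Second, the $W_{\mathrm{loc}}^{1,p}$ membership of the limit is not just ``standard regularity'': the paper needs $v^{q_1}\mathrm{d}\sigma$, $u^{q_2}\mathrm{d}\sigma\in W_{\mathrm{loc}}^{-1,p'}(\mathds{R}^n)$, which it gets from the local Wolff inequality~\eqref{wolff's inequality local} together with $L^s_{\mathrm{loc}}(\mathrm{d}\sigma)$-integrability of the barrier (Lemma~\ref{regularidade solution wolff}), and then invokes Lemma~\ref{soution in W_loc}. Finally, for $p\geq n$ the paper runs a capacity argument ($\mathrm{cap}_p(B_r)=0$ forces $\nabla w\equiv 0$ via a Caccioppoli-type bound), which is a complete proof; your appeal to the growth of the fundamental solution (including the $p=n$ logarithm) is only a heuristic and would need the same kind of Liouville argument spelled out.
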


\begin{remark}
  In view of Theorem~\ref{constante Maly} and Lemma~\ref{estimativainferior}, the lower estimates in \eqref{estimative upper and lower p-laplacian} holds for any distributional solution $(u,v)$ to \eqref{plaplaciansystem}. Nevertheless, the upper estimates in \eqref{estimative upper and lower p-laplacian} is obtained only for the minimal solution.
\end{remark}
%%%%%%%%%%%%%%%%%%%%%%%%%%%%%%%%
%%%%%%%%%%%%%%%%%%%%%%%%%%%%%%%%
In the next theorem, we give a necessary and sufficient condition for the existence of a distributional solution to Syst.~\eqref{plaplaciansystem}  satisfying \eqref{estimative upper and lower p-laplacian}.
\begin{theorem}\label{solutionplaplacian general weaker}
Let $\sigma\in {M}^+(\mathds{R}^n), \; 1<p<n$ and $0<q_i<p-1$ for $i=1,2$. 
Then there exists a $p$-superharmonic positive solution $(u,v)$ to  Syst.~\eqref{plaplaciansystem} satisfying \eqref{estimative upper and lower p-laplacian} if and only if there exists $\lambda>0$ such that almost everywhere we have
\begin{equation}\label{condition necessary weaker}
    \begin{aligned}
  & \mathbf{W}_{1,p}\left((\mathbf{W}_{1,p}\sigma)^{\frac{q_2(p-1)(p-1+q_1)}{(p-1)^2-q_1q_2}} \right)\leq \lambda\left(\mathbf{W}_{1,p}\sigma + \left(\mathbf{W}_{1,p}\sigma\right)^{\frac{(p-1)(p-1+q_1)}{(p-1)^2-q_1q_2}} \right) <\infty, 
  \\
   & \mathbf{W}_{1,p}\left((\mathbf{W}_{1,p}\sigma)^{\frac{q_1(p-1)(p-1+q_2)}{(p-1)^2-q_1q_2}} \right)\leq \lambda\left(\mathbf{W}_{1,p}\sigma + \left(\mathbf{W}_{1,p}\sigma\right)^{\frac{(p-1)(p-1+q_2)}{(p-1)^2-q_1q_2}} \right) <\infty .
    \end{aligned}
\end{equation}
\end{theorem}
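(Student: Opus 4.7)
The plan is to reduce both directions of the equivalence to Theorem~\ref{existencewolffsystemlocweaker} applied with $\alpha=1$, using the Kilpel\"ainen--Mal\'y pointwise estimate \eqref{potential estimate} as a bridge between $p$-superharmonic distributional solutions of \eqref{plaplaciansystem} and solutions of the integral system \eqref{sistemawolff}. Observe that for $\alpha=1$ condition \eqref{weakercontidion} coincides with \eqref{condition necessary weaker} once the implicit measure $\mathrm{d}\sigma$ is identified on the left-hand sides, so the characterization for the integral system is already available from Theorem~\ref{existencewolffsystemlocweaker}.

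For sufficiency, assume \eqref{condition necessary weaker}. Finiteness of the right-hand sides forces $\mathbf{W}_{1,p}\sigma$ to be finite a.e., so \eqref{potencial finito} holds. Theorem~\ref{existencewolffsystemlocweaker} with $\alpha=1$ then produces a pair $(u_{\ast},v_{\ast})$ solving \eqref{sistemawolff} and satisfying the two-sided bounds \eqref{estimate upper and lower}. To lift $(u_{\ast},v_{\ast})$ to a distributional solution of \eqref{plaplaciansystem}, I would mimic the iterative scheme of Theorem~\ref{solutionplaplacian general}: set $u_{0}=v_{0}\equiv 0$ and let $u_{k+1}, v_{k+1}$ be the minimal $p$-superharmonic solutions of $-\Delta_p u_{k+1}=v_k^{q_1}\sigma$ and $-\Delta_p v_{k+1}=u_k^{q_2}\sigma$ vanishing at infinity, whose existence is guaranteed by Kilpel\"ainen--Mal\'y solvability of \eqref{kilpelainenmalequation} under the finiteness of the relevant Wolff potentials. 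Applying \eqref{potential estimate} to each step, using the integral identities for $(u_{\ast},v_{\ast})$, and exploiting the scaling $\mathbf{W}_{1,p}(t\mu)=t^{1/(p-1)}\mathbf{W}_{1,p}(\mu)$, a straightforward induction yields $u_{k}\le Cu_{\ast}$ and $v_{k}\le Cv_{\ast}$ with $C$ independent of $k$. Monotonicity of the iterates and standard stability of $p$-superharmonic functions then produce a limit $(u,v)\in W_{\mathrm{loc}}^{1,p}(\mathds{R}^n)\times W_{\mathrm{loc}}^{1,p}(\mathds{R}^n)$ solving \eqref{plaplaciansystem} in the distributional sense, and the bounds \eqref{estimative upper and lower p-laplacian} follow by combining \eqref{estimate upper and lower} for $(u_{\ast},v_{\ast})$ with one more application of \eqref{potential estimate}.

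For necessity, let $(u,v)$ be a $p$-superharmonic solution of \eqref{plaplaciansystem} satisfying \eqref{estimative upper and lower p-laplacian}. Applying \eqref{potential estimate} to the equations $-\Delta_p u=v^{q_1}\sigma$ and $-\Delta_p v=u^{q_2}\sigma$ gives
\begin{equation*}
K^{-1}\mathbf{W}_{1,p}(v^{q_1}\sigma)\le u\le K\mathbf{W}_{1,p}(v^{q_1}\sigma),\qquad K^{-1}\mathbf{W}_{1,p}(u^{q_2}\sigma)\le v\le K\mathbf{W}_{1,p}(u^{q_2}\sigma).
\end{equation*}
Inserting the lower bound for $u$ from \eqref{estimative upper and lower p-laplacian} into $u^{q_2}\sigma$ and using the scaling of $\mathbf{W}_{1,p}$ under multiplication by positive constants yields
\begin{equation*}
\mathbf{W}_{1,p}\!\left((\mathbf{W}_{1,p}\sigma)^{\frac{q_{2}(p-1)(p-1+q_1)}{(p-1)^{2}-q_1 q_2}}\mathrm{d}\sigma\right)\le c^{q_2/(p-1)}\mathbf{W}_{1,p}(u^{q_2}\sigma)\le c^{q_2/(p-1)}K\,v,
\end{equation*}
and combining with the upper bound for $v$ in \eqref{estimative upper and lower p-laplacian} delivers the first inequality of \eqref{condition necessary weaker}; the second inequality follows symmetrically from the lower bound for $v$ and the upper bound for $u$.

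The principal obstacle lies in the sufficiency direction: to control the iterates without assuming the capacity hypothesis \eqref{sigma abs cap_p} of Theorem~\ref{solutionplaplacian general}, the uniform domination $u_k\le Cu_{\ast}$, $v_k\le Cv_{\ast}$ must be bootstrapped solely from \eqref{potential estimate} and the homogeneity of the Wolff potential. The necessary integrability $u_{\ast},v_{\ast}\in L^{s}_{\mathrm{loc}}(\mathds{R}^n,\mathrm{d}\sigma)$ for every $s>0$ is supplied by the last clause of Theorem~\ref{existencewolffsystemlocweaker}, and once the uniform bound is in place the convergence of $(u_k,v_k)$ to a $p$-superharmonic distributional solution is routine via classical stability and compactness results for $p$-superharmonic functions.
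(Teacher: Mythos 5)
Your necessity argument is essentially the paper's: you combine the Kilpel\"ainen--Mal\'y bounds $K^{-1}\mathbf{W}_{1,p}(u^{q_2}\mathrm{d}\sigma)\le v$ and $K^{-1}\mathbf{W}_{1,p}(v^{q_1}\mathrm{d}\sigma)\le u$ with the assumed two-sided bounds \eqref{estimative upper and lower p-laplacian}, and this is sound. (Note only that what your computation literally yields is $\mathbf{W}_{1,p}\big((\mathbf{W}_{1,p}\sigma)^{q_2\gamma_1}\mathrm{d}\sigma\big)\le \lambda\big(\mathbf{W}_{1,p}\sigma+(\mathbf{W}_{1,p}\sigma)^{\gamma_2}\big)$ together with its symmetric counterpart, with $\gamma_1,\gamma_2$ as in \eqref{gamma1}--\eqref{gamma2}; this is exactly what the paper's own proof derives, even though the pairing of exponents displayed in \eqref{condition necessary weaker} reads differently, so be careful when you assert that you have obtained ``the first inequality'' verbatim.)

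The sufficiency direction, however, has a genuine flaw: you initialize the successive approximation at $u_0=v_0\equiv 0$. Since $q_1,q_2>0$, the first step solves $-\Delta_p u_1=v_0^{q_1}\sigma=0$ with $\varliminf_{|x|\to\infty}u_1(x)=0$, so $u_1\equiv 0$ by Theorem~\ref{constante Maly}, and likewise $v_1\equiv 0$; by induction every iterate vanishes, and your scheme converges to the trivial pair, which is neither positive nor compatible with the lower bounds in \eqref{estimative upper and lower p-laplacian}. The paper avoids this by starting the iteration at the explicit positive subsolution $u_0=\varepsilon(\mathbf{W}_{1,p}\sigma)^{\gamma_1}$, $v_0=\varepsilon(\mathbf{W}_{1,p}\sigma)^{\gamma_2}$ with $\varepsilon$ small, solving Dirichlet problems on the balls $B(0,2^i)$ and passing to the limit via the comparison principle, Lemma~\ref{Lemma limit} and Theorem~\ref{weak continuity p-laplacian}; the constants $c_j,\tilde c_j$ of \eqref{c_j and d_j} then tend to positive limits, and this is how the lower bounds for the limit solution are obtained --- they do not follow, as you claim, from the upper barrier together with one more application of \eqref{potential estimate}. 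Two lesser points: the uniform domination $u_k\le Cu_{\ast}$, $v_k\le Cv_{\ast}$ with $C$ independent of $k$ is asserted rather than proved; it does hold because $q_i/(p-1)<1$ keeps the recursive constants $C_{k+1}=K\max\{C_k^{q_1/(p-1)},C_k^{q_2/(p-1)}\}$ bounded, but the paper sidesteps the issue entirely by applying Theorem~\ref{existencewolffsystemlocweaker} with $K^{p-1}\sigma$ in place of $\sigma$, so that the barrier satisfies $\tilde u=K\,\mathbf{W}_{1,p}(\tilde v^{q_1}\mathrm{d}\sigma)$, $\tilde v=K\,\mathbf{W}_{1,p}(\tilde u^{q_2}\mathrm{d}\sigma)$ and absorbs $K$ exactly. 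Finally, your claim that the limit lies in $W^{1,p}_{\mathrm{loc}}(\mathds{R}^n)\times W^{1,p}_{\mathrm{loc}}(\mathds{R}^n)$ is not part of the statement and is not available here without the dual-space membership $v^{q_1}\mathrm{d}\sigma,\,u^{q_2}\mathrm{d}\sigma\in W^{-1,p'}_{\mathrm{loc}}(\mathds{R}^n)$, which the weaker hypothesis \eqref{condition necessary weaker} does not obviously provide (that membership is obtained in Theorem~\ref{solutionplaplacian general} through the capacity condition \eqref{sigma abs cap_p} and Lemma~\ref{regularidade solution wolff}).
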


We also consider solutions to systems of equations involving the fractional Laplacian of the form	
\begin{equation}\label{fractlaplaciamsystem}
     \left\{
\begin{aligned}
& (-\Delta)^{\alpha} u= \sigma\, v^{q_1}, \quad v>0 \quad \mbox{in}\quad \mathds{R}^n,\\ 
& (-\Delta)^{\alpha} v= \sigma\, u^{q_2}, \quad u>0 \quad \mbox{in}\quad \mathds{R}^n,\\
%& u\geq 0,\, v\geq 0,\\ 
& \varliminf_{|x|\to \infty}u(x)=0, \quad \varliminf_{|x|\to \infty}v(x)=0.
\end{aligned}
\right.
 \end{equation}
 with $0<\alpha<n/2$ and $q_1,~ q_2\in(0,1)$. A solution $(u,v)$ to \eqref{fractlaplaciamsystem} is understood in the sense
 \begin{equation*}
 \left\{
\begin{aligned}
& u(x)=\frac{1}{c(n,\alpha)}\mathbf{I}_{2\alpha}(v^{q_1}\mathrm{d} \sigma)(x), \quad x\in \mathds{R}^n,\\ 
& v(x)=\frac{1}{c(n,\alpha)}\mathbf{I}_{2\alpha}(u^{q_2}\mathrm{d} \sigma)(x), \quad x\in \mathds{R}^n,
\end{aligned}
\right.
 \end{equation*}
where $\mathbf{I}_{2\alpha}$ is the Riesz Potential defined in \eqref{RieszPotential} and
$c(n,\alpha)$ is a normalization constant given by
\begin{equation*}
    c(n,\alpha)=\frac{2^{\alpha}\pi^{\frac{n}{2}}\Gamma(\alpha)}{\Gamma(\frac{n-2\alpha}{2})}
\end{equation*}
(see  for instance \cite{MR4043885}).
From now on, the normalization constant will be dropped for the sake of convenience.
Thus, Syst.~\eqref{fractlaplaciamsystem} is equivalent (up to a constant) to the system integral \eqref{sistemawolff}, since 
\begin{equation*}
    \mathbf{I}_{2\alpha}\sigma (x)=\int_{\mathds{R}^n} \frac{\mathrm{d}\sigma(y)}{|x-y|^{n-2\alpha}}=(n-2\alpha)\int_{0}^\infty \frac{\sigma(B(x,t))}{t^{n-2\alpha}}\frac{\mathrm{d} t}{t}=(n-2\alpha)\mathbf{W}_{\alpha,2}\sigma(x).
\end{equation*}

Therefore, the following theorem is a special case of Theorem \ref{existencewollfsystem} with $p=2$.

\begin{theorem}\label{thm frac system}
Let $0<2\alpha<{n}$, $0<q_i<1$, $i=1,2$. Suppose $\sigma\in {M}^+(\mathds{R}^n)$ satisfies both \eqref{general wolff finite} and \eqref{sigma abscont alfa p capacidade} with $p=2$. Then Syst.~\eqref{fractlaplaciamsystem} admits a solution such that
\begin{equation}\label{estimate riesz}
     \begin{aligned}
    & c^{-1}\left(\mathbf{I}_{2\alpha}\sigma\right)^{\frac{1+q_1}{1-q_1q_2}}\leq u\leq c\left(\mathbf{I}_{2\alpha}\sigma + \left(\mathbf{I}_{2\alpha}\sigma\right)^{\frac{1+q_1}{1-q_1q_2}} \right),\\
   & c^{-1}\left(\mathbf{I}_{2\alpha}\sigma\right)^{\frac{1+q_2}{1-q_1q_2}}\leq v\leq c\left(\mathbf{I}_{2\alpha}\sigma + \left(\mathbf{I}_{2\alpha}\sigma\right)^{\frac{1+q_2}{1-q_1q_2}} \right),
    \end{aligned}
\end{equation}
where $c=c(n,q_1,q_2,\alpha,C_{\sigma})>0$. Furthermore, $u, v \in L_{\mathrm{loc}}^{s}(\mathds{R}^n,\, \mathrm{d} \sigma)$, for every $s>0$.
\end{theorem}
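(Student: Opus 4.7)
The plan is to deduce Theorem~\ref{thm frac system} directly from Theorem~\ref{existencewollfsystem} specialized to $p=2$. First I would invoke the equivalence, already recorded in the discussion preceding the statement, between Syst.~\eqref{fractlaplaciamsystem} and the system of integral equations
\begin{equation*}
u=\mathbf{I}_{2\alpha}(v^{q_1}\mathrm{d}\sigma), \qquad v=\mathbf{I}_{2\alpha}(u^{q_2}\mathrm{d}\sigma),
\end{equation*}
obtained after absorbing the normalization constant $c(n,\alpha)$. Combined with the identity $\mathbf{I}_{2\alpha}\sigma(x)=(n-2\alpha)\,\mathbf{W}_{\alpha,2}\sigma(x)$ recalled just before the statement, this shows that the fractional system is, up to a multiplicative constant depending only on $n$ and $\alpha$, the same as Syst.~\eqref{sistemawolff} with $p=2$.

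Next, the hypotheses \eqref{general wolff finite} and \eqref{sigma abscont alfa p capacidade} imposed here for $p=2$ are exactly those required in Theorem~\ref{existencewollfsystem}. I would therefore apply that theorem with $p=2$ to produce a solution $(u,v)$, belonging to $L^s_{\mathrm{loc}}(\mathds{R}^n,\mathrm{d}\sigma)$ for every $s>0$, that satisfies the Wolff potential estimates \eqref{estimate upper and lower}. Substituting $p=2$ reduces the exponents
\begin{equation*}
\frac{(p-1)(p-1+q_i)}{(p-1)^2-q_1q_2} \quad \text{to} \quad \frac{1+q_i}{1-q_1q_2}, \qquad i=1,2,
\end{equation*}
and replacing $\mathbf{W}_{\alpha,2}\sigma$ by $(n-2\alpha)^{-1}\mathbf{I}_{2\alpha}\sigma$ only changes the overall constant. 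Consequently the bounds in \eqref{estimate upper and lower} translate term by term into the Riesz potential bounds stated in \eqref{estimate riesz}, with the constant $c$ updated accordingly.

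Finally, a suitable rescaling of $(u,v)$ by the appropriate power of $c(n,\alpha)$ (to restore the normalization dropped after \eqref{fractlaplaciamsystem}) produces a bona fide solution of the original fractional Laplacian system satisfying \eqref{estimate riesz}, and the local integrability conclusion $u,v\in L^{s}_{\mathrm{loc}}(\mathds{R}^n,\mathrm{d}\sigma)$ transfers verbatim. I do not anticipate a substantive obstacle here, since everything has been arranged so that Theorem~\ref{thm frac system} is an immediate specialization of Theorem~\ref{existencewollfsystem}; the only bookkeeping is the careful tracking of multiplicative constants and the simplification of the exponents at $p=2$.
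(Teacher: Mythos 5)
Your proposal is correct and follows exactly the paper's own route: the paper proves Theorem~\ref{thm frac system} in one line by noting $\mathbf{I}_{2\alpha}\mu=(n-2\alpha)\mathbf{W}_{\alpha,2}\mu$ and declaring the result a special case of Theorem~\ref{existencewollfsystem} with $p=2$, which is precisely your reduction (including the simplification of the exponents to $\frac{1+q_i}{1-q_1q_2}$ and the absorption of the normalization constants into $c$).
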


\begin{remark}
    Suppose $\mathbf{I}_{2\alpha}\sigma \in L^{\infty}(\mathds{R}^n)$ as in \cite{MR1141779}, 
    then assumption  \eqref{sigma abscont alfa p capacidade} holds for $p=2$. 
    Indeed, in view of \cite[Theorem~1.11]{MR1747901}, there exists a constant $C_0>0$ which depends only on $n$ and $\alpha$, such that, for all compact sets $E\subset \mathds{R}^n$,
\begin{equation*}
\begin{aligned}
   \sigma(E) & = \int_E\mathrm{d}\sigma = \int_E \mathbf{W}_{\alpha,2}\sigma \,\frac{\mathrm{d}\sigma}{\mathbf{W}_{\alpha,2}\sigma} =\int_E \mathbf{I}_{2\alpha}\sigma\,\frac{\mathrm{d}\sigma}{\mathbf{I}_{2\alpha}\sigma} \\
    & \leq \|\mathbf{I}_{2\alpha}\sigma\|_{L^{\infty}}\int_E \frac{\mathrm{d}\sigma}{\mathbf{I}_{2\alpha}\sigma}\\
    & \leq \|\mathbf{I}_{2\alpha}\sigma\|_{L^{\infty}}\, C_0 \,\,\mathrm{cap}_{\alpha, 2}(E),
\end{aligned}    
\end{equation*}
and this shows condition \eqref{sigma abscont alfa p capacidade} for $p=2$. Thus, under assumption $\mathbf{I}_{2\alpha}\sigma \in L^{\infty}(\mathds{R}^n)$, 
from Theorem~\ref{thm frac system}, it follows that Syst.~\eqref{fractlaplaciamsystem} possess a \textit{bounded} nontrivial solution $(u,v)$ satisfying the so-called Brezis–Kamin estimate \eqref{estimate riesz}. In particular, Theorem~\ref{thm frac system}, with $\alpha=1$ and $q_1=q_2$, recover \cite[Theorem~1]{MR1141779} together with the estimate \eqref{BK estimate}, provided $\mathbf{I}_{2}\sigma \in L^{\infty}(\mathds{R}^n)$.
\end{remark}

\subsection{Related results}
Recently, studies of systems involving the $p$-Laplace operator with measure-valued right-hand side have been done. In this respect, in \cite{MR3779689}, T. Kuusi and G. Mingione used the notion SOLA (Solution Obtained as Limits of Approximations) to propose a vectorial version of \cite{MR1205885}. They use a different approach from ours to establish local upper pointwise potential estimates for $W_{\mathrm{loc}}^{1,p-1}$-vectorial solutions in terms of Wolff's potential if $p>2-1/n$,  and its gradient in terms of Riesz's potential if $p>2$, under the standard assumptions $|\sigma|(\mathds{R}^n)<\infty$ (see also \cite{MR3485149}).  
We emphasize that the present work brings a global and more accurate estimate depending only on Wolff's potential of $\sigma$ of some distributional solutions to Syst.~\eqref{plaplaciansystem} than the one in \cite{MR3779689}. For more results of distributional solutions to the $p$-Laplacian system with the measure-valued right-hand side, see, for instance, \cite{MR1450953}.  G. Dolzmann, N. Hungerb\"{u}hler and S. M\"{u}ller in \cite{MR1450953} proved the existence of distributional solutions for $p$-harmonic functions and established the Lorentz space estimates for such solutions.
I. Chlebicka, Y. Youn, and A. Zatorska-Goldstein \cite{MR4530311} applied the approach introduced in \cite{MR3779689} to study solutions to measure data elliptic systems involving operators of the divergence form with Orlicz growth. They provided pointwise estimates for the solutions expressed in terms of a nonlinear potential of generalized Wolff type.

\subsection{Organization of the paper} 

In Sect.~\ref{ preliminaries}, we present auxiliary results from nonlinear potential theory. 
In Sect.~\ref{section3}, we produce a framework to solve Syst.~\eqref{plaplaciansystem} by proving Theorem~\ref{existencewollfsystem} and Theorem~\ref{existencewolffsystemlocweaker}. In Sect.~\ref{section4}, we give the proofs of Theorems~\ref{solutionplaplacian general}, Theorem~\ref{solutionplaplacian general weaker}, and Theorem~\ref{thm frac system}. In Sect.~\ref{final comments}, we finish with some consequences of another type of problem.

\subsection{Notations and definitions}  
\begin{itemize}
    \item  $\Omega$ is a domain in $\mathds{R}^n$.
    \item As usual, we use the letters $c$, $\tilde{c}$, $C$, and $\tilde{C}$,  with or without subscripts, to denote different constants.
    \item $\chi_E:=$ the characteristic function of a set $E$.
    \item $ {M}^+(\Omega):=$ the set of all nonnegative Radon measures $\sigma$ defined  on $\Omega$. 
    \item Often, we use the Greek letters $\mu$ and $\omega$ to denote Radon measures.
    \item  We just denote by $ \mathrm{d}\sigma \mbox{-a.e in } \mathds{R}^n,$ when a property holds almost everywhere in $\mathds{R}^n$ in the sense of the measure $\sigma$.
    \item $C(\Omega):=$ the set of all continuous functions in on $\Omega$.
    \item $C_c^{\infty}(\Omega):=$ the set of all infinitely differentiable functions with compact support in $\Omega$. 
    \item $L_{\mathrm{loc}}^s(\Omega, \mathrm{d}\mu):=$ the local $L^s$ space with respect to $\mu\in  {M}^+{(\Omega)}$, $s>0$. If $\mu$ is the Lebesgue measure, we write $L_{\mathrm{loc}}^s(\Omega)$. 
    \item $W_{\mathrm{loc}}^{1,p}(\mathds{R}^n)$ is the space of all functions $u \in L_{\mathrm{loc}}^p(\mathds{R}^n)$ which admit weak derivatives $\partial_i u \in L_{\mathrm{loc}}^p(\mathds{R}^n)$ for $i=1,\ldots,n$.
    \item $ W_{\mathrm{loc}}^{-1,p'}(\Omega) $ is the dual of the Sobolev space  $W_{\mathrm{loc}}^{1,p}(\mathds{R}^n)$, where $p'=p/(p-1)$.
   \item We denote by $\sigma(E)=\int_E \mathrm{d}\sigma$ the measure of any $\sigma$-measurable subset $E$ of $\Omega$.
\end{itemize}

%%%%%%%%%%%%%%%%%%%%%%%%%%%%%%%%%%%%%%%%%%%%%%%%%%%%%%%%%%%%
   \section{Preliminaries}\label{ preliminaries} 
%%%%%%%%%%%%%%%%%%%%%%%%%%%%%%%%%%%%%%%%%%%%%%%%%%%%%%%%%%%%
We begin this section by recalling some basic definitions and results for easy reference.  For details see  \cite{MR1205885, MR1264000, MR2305115}.
\begin{definition}\label{maracatu} Let $\Omega$ be a domain in $\mathds{R}^n$ and $1<p<\infty$. 
\begin{enumerate}
\item We define the $p$-Laplace operator for $w\in W_{\mathrm{loc}}^{1,p}(\Omega)$ in a distributional sense as follows 
\begin{equation*}
    \langle \Delta_pw,\varphi\rangle=\langle\mathrm{div}\left(|\nabla w|^{p-2}\nabla w\right),\varphi\rangle =-\int_\Omega |\nabla w|^{p-2}\nabla w\cdot\nabla \varphi\,\mathrm{d} x, \quad \forall \varphi\in C_c^{\infty}(\Omega).
\end{equation*}
\item We say that $w\in W_{\mathrm{loc}}^{1,p}(\Omega)$ is a distributional solution of the homogeneous $p$-laplacian equation ($p$-harmonic) if
$ \langle-\Delta_pw,\varphi\rangle=0, \;\forall  \varphi\in C_c^{\infty}(\Omega)$.
\item We define a  supersolution $w\in W_{\mathrm{loc}}^{1,p}(\Omega)$ in $\Omega$ if 
$\langle-\Delta_pw,\varphi\rangle \geq 0$
for all nonnegative $\varphi\in C_c^{\infty}(\Omega)$.
\end{enumerate}
\end{definition}

Next, we extend the notion of the distributional solutions for the equation  $-\Delta_p w=\mu$ where $w$ does not necessarily belong to $W_{\mathrm{loc}}^{1,p}(\Omega)$ and $\mu$ is in the dual space $ W_{\mathrm{loc}}^{-1,p'}(\Omega) $. Indeed, we will understand solutions in the
following potential-theoretic sense using $p$-superharmonic functions.

\begin{definition}\label{frevo}
A function $w:\Omega\to(-\infty,\infty) \cup \{\infty\}$ is $p$-superharmonic in $\Omega$ if 
\begin{enumerate}
    \item $w$ is lower semicontinuous,
    \item $w$ is not identically infinite in any component of $\Omega$,
    \item for each open subset  $D$ compactly contained in $\Omega$ and each $p$-harmonic function $h$ in $D$ such that $h \in  C(\overline{D})$  and $h\leq w$ in $\partial D$ implies $h\leq w$ in $D$.
\end{enumerate}
We denote by $\mathcal{S}_p(\Omega)$ for the class of all $p$-superharmonic functions in $\Omega$.
\end{definition}
   
For $w \in \mathcal{S}_p(\Omega)$ we define its truncation as follows 
\begin{equation*}
    T_k(w)=\min (k,\max(w,-k)), \quad \forall k >0. 
\end{equation*}
We mention that $ T_k(w) \in W_{\mathrm{loc}}^{1,p}(\Omega)$ although $w$ does not necessarily belong to $W_{\mathrm{loc}}^{1,p}(\Omega)$.
It is well known that $T_k(w)$ is a supersolution in $\Omega$, for all $k>0$, in the sense of Definition~\ref{maracatu} item (3). 

Following \cite{MR2305115}, we consider the generalized gradient for $w\in \mathcal{S}_p(\Omega)$ (see Definition~\ref{frevo}), as the following pointwiese limit 
\begin{equation*}
    Dw(x)=\lim_{k\to \infty}\nabla( T_k(w))(x) \quad \text{almost everywhere in }  \Omega.
\end{equation*}
If $p>2-1/n$, one checks that $Du$ is the distributional gradient of $u$ (see \cite[page~154]{MR2305115}). Using  \cite[Theorem~1.15]{MR1205885}, we see that for $w \in \mathcal{S}_p(\Omega)$  and $1\leq r<n/(n-1)$, we have $|Dw|^{p-1}\in L_{\mathrm{loc}}^{r}(\Omega)$. In particular,  $|Dw|^{p-2}Dw\in L_{\mathrm{loc}}^{r}(\Omega)$. 
 Because of this fact, we can define the $p$-Laplace operator in a distributional sense for $w\in \mathcal{S}_p(\Omega)$ as follows 
   \begin{equation*}
       \langle-\Delta_p w,\varphi\rangle=\int_\Omega |Dw|^{p-2}Dw\cdot \nabla \varphi\,\mathrm{d} x, \quad \forall \varphi\in C_c^{\infty}(\Omega).
   \end{equation*}
Therefore, by the Riesz Representation Theorem, there exists a unique measure $\mu=\mu[w]\in  {M}^+(\Omega)$ such that $-\Delta_p w=\mu[w]$. In the literature, $\mu[w]$  is called the Riesz measure of $w$.
   \begin{definition}\label{mangai}
       
For $ \sigma \in {M}^+(\Omega)$, we say that $w$ is  a solution in the potential-theoretic sense to the equation
\begin{equation*}
    -\Delta_p w= \sigma \quad \mbox{in}\quad \Omega
\end{equation*}
   if $w \in \mathcal{S}_p(\Omega)$ and $\mu[w]=\sigma$.
   \end{definition}
   In light of Definition~\ref{mangai}, if $\sigma\in  {M}^+(\Omega)$, then a pair $(u,v)$ is a solution (in the potential-theoretic sense) to the system
\begin{equation}\label{solutionsystemsense}
     \left\{
\begin{aligned}
& -\Delta_p u= \sigma\, v^{q_1} \quad \mbox{in}\quad \Omega,\\ 
& -\Delta_p v= \sigma\, u^{q_2} \quad \mbox{in}\quad \Omega
\end{aligned}
\right.
\end{equation}
whenever $u$ and $v$ are nonnegative functions and 
\begin{equation}
     \left\{
\begin{aligned}
& u\in \mathcal{S}_p(\Omega)\cap L_{\mathrm{loc}}^{q_2}(\Omega, \mathrm{d} \sigma), \\
& v\in \mathcal{S}_p(\Omega)\cap L_{\mathrm{loc}}^{q_1}(\Omega, \mathrm{d} \sigma), \\
& \mathrm{d} \mu[u]=v^{q_1}\mathrm{d} \sigma,\\ 
& \mathrm{d}\mu[v]=u^{q_2}\mathrm{d}\sigma.
\end{aligned}
\right.
\end{equation}

\begin{definition}\label{frevo1}
   Let $\sigma\in {M}^+(\mathds{R}^n)$. A pair $(u,v)$ is called a supersolution to Syst.~\eqref{plaplaciansystem} if $u$ and $v$ are nonnegative functions and
   \begin{equation}
     \left\{
\begin{aligned}
& u\in \mathcal{S}_p(\mathds{R}^n)\cap L_{\mathrm{loc}}^{q_2}(\mathds{R}^n, \mathrm{d} \sigma), \\
& v\in \mathcal{S}_p(\mathds{R}^n)\cap L_{\mathrm{loc}}^{q_1}(\mathds{R}^n, \mathrm{d} \sigma), \\
& \int_{\mathds{R}^n} |D u|^{p-2}Du\cdot \nabla \varphi\,\mathrm{d} x\geq \int_{\mathds{R}^n} v^{q_1}\varphi\,\mathrm{d}\sigma, \\ 
& \int_{\mathds{R}^n} |D v|^{p-2}Dv\cdot \nabla \varphi\,\mathrm{d} x\geq \int_{\mathds{R}^n} u^{q_2}\varphi\,\mathrm{d}\sigma, \quad \forall \varphi\in C_c^{\infty}(\mathds{R}^n),\, \varphi\geq 0.
\end{aligned}
\right.
\end{equation}
The notion of  \textit{subsolution} to Syst.~\eqref{plaplaciansystem} is defined similarly by replacing ``$\geq$'' by  ``$\leq$'' in Definition~\ref{frevo1}. 
\end{definition}

\begin{remark}\label{frevo2}
In view of Definitions \ref{frevo} and \ref{frevo1}, supersolutions (or solution) to Syst.~\eqref{plaplaciansystem} are supersolutions in $\mathds{R}^n$. Indeed, if $(u,v)$ is a supersolution to \eqref{plaplaciansystem} in the sense of Definition~\ref{frevo1} with $u,\, v \in W_{\mathrm{loc}}^{1,p}(\mathds{R}^n)$, then $u$ and $v$ are supersolutions in $\mathds{R}^n$ in the sense of Definition~\ref{frevo}.  \end{remark}

Next, we will employ some fundamental results of the potential theory of quasilinear elliptic
equations.  Let us state the next result, which will be used to prove that a pointwise limit of a sequence of $p$-superharmonic functions is, indeed, a $p$-superharmonic (see \cite[Lemma~7.3]{MR2305115}).

\begin{lemmaletter}
    \label{Lemma limit}
       Suppose that $\{w_j\}$ is a sequence of $p$-superharmonic functions in $\Omega$. If the sequence $\{w_j\}$ either inscreasing or converges uniformly on compact subsets in $\Omega$, then in each component of $\Omega$ the pointwise limit function $w=\lim_{j\to \infty}w_j$ is a $p$-superharmonic function unless $w\equiv \infty$.
\end{lemmaletter}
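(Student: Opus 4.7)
The plan is to verify, for the pointwise limit $w$, the three defining conditions of a $p$-superharmonic function from Definition~\ref{frevo}. Condition (2), that $w \not\equiv \infty$ in the chosen component, is built into the statement in the monotone case and is automatic in the uniform-convergence case, since on any compact subset $w$ is the uniform limit of finite functions and is therefore finite there. Condition (1) (lower semicontinuity) is easy in both regimes: in the monotone case $w = \sup_j w_j$ is lower semicontinuous as a supremum of lower semicontinuous functions, and in the uniform case lower semicontinuity is preserved under uniform limits on compact sets.

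The heart of the argument, and the step I expect to require the most care, is condition (3), the comparison with $p$-harmonic functions. Fix an open set $D \Subset \Omega$ and a $p$-harmonic function $h \in C(\overline{D})$ with $h \leq w$ on $\partial D$. I want to conclude that $h \leq w$ throughout $D$. The idea is to reduce to the already-known comparison property of each $w_j$ by an $\varepsilon$-perturbation. For $\varepsilon > 0$, I will produce an index $j_\varepsilon$ such that $h - \varepsilon \leq w_{j_\varepsilon}$ on $\partial D$. In the monotone setting I intend to apply a Dini-type argument: the sets $\{x \in \partial D : h(x) - \varepsilon < w_j(x)\}$ are relatively open in the compact set $\partial D$ (by continuity of $h$ and lower semicontinuity of $w_j$), nested and increasing in $j$, and cover $\partial D$ because $h - \varepsilon < h \leq w = \sup_j w_j$ on $\partial D$; compactness then picks out a single $j_\varepsilon$. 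In the uniform-convergence case, the inequality $h - \varepsilon \leq w_{j_\varepsilon}$ on $\partial D$ is obtained directly by choosing $j_\varepsilon$ so that $\sup_{\overline{D}} |w_{j_\varepsilon} - w| < \varepsilon$.

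With $h - \varepsilon \leq w_{j_\varepsilon}$ on $\partial D$ in hand, condition (3) of Definition~\ref{frevo} applied to the $p$-superharmonic function $w_{j_\varepsilon}$ against the $p$-harmonic competitor $h - \varepsilon$ yields $h - \varepsilon \leq w_{j_\varepsilon}$ in $D$. Since $w_{j_\varepsilon} \leq w$ (directly in the monotone case, and up to an additional $\varepsilon$ that is absorbed by relabelling $\varepsilon \to 2\varepsilon$ in the uniform case), this gives $h - \varepsilon \leq w$ in $D$, and letting $\varepsilon \to 0^+$ finishes the proof. The main obstacle is precisely the Dini-type reduction in the monotone case, because $w_j \nearrow w$ holds only pointwise and $w$ is only lower semicontinuous, so one cannot directly compare $h$ and $w_j$ on $\partial D$ without the perturbation combined with the compactness of $\partial D$.
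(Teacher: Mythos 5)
The paper does not give a proof of this lemma; it is quoted from Heinonen--Kilpel\"ainen--Martio (\cite[Lemma~7.3]{MR2305115}) and used as a black box, so there is no in-paper argument to compare against. Your blind reconstruction is correct and is essentially the standard one: you verify the three properties in Definition~\ref{frevo} directly, and the only real work — the comparison property (3) — is handled by the $\varepsilon$-shift combined with Dini/compactness on $\partial D$. The Dini step is set up properly: the sets $\{x\in\partial D: h(x)-\varepsilon<w_j(x)\}$ are relatively open in $\partial D$ (continuity of $h$ plus lower semicontinuity of $w_j$), nested, and cover $\partial D$, including at points where $w(x)=\infty$, since the monotone sequence $w_j(x)$ eventually exceeds the finite number $h(x)-\varepsilon$. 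Subtracting a constant preserves $p$-harmonicity, so the comparison property of $w_{j_\varepsilon}$ applies, and letting $\varepsilon\to 0^+$ finishes condition (3); conditions (1) and (2) are dispatched as you say. The only cosmetic point is that in the locally-uniform case you implicitly assume the $w_j$ are finite on compacta so that $\sup_{\overline D}|w_j-w|$ is meaningful; this is a standing convention and does not affect the argument.
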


    The following weak continuity of the $p$-Laplacian result is due to 
    N. S. Trudinger and X.-J. Wang \cite{MR1890997} and will be used to prove the existence of $p$-superharmonic solutions to quasilinear equations.

\begin{theoremletter}
\label{weak continuity p-laplacian}
    Let $\{w_j\}$ be a sequence of nonnegative $p$-superharmonic functions in $\Omega$. Suppose that $\{w_j\}$ converge pointwise to $w$  where is finite almost everywhere and  $p$-superharmonic function in $\Omega$. Then $\mu[w_j]$ converges weakly to $\mu[w]$, that is,
    \begin{equation*}
        \lim_{j\to \infty}\int_{\Omega}\varphi\, \mathrm{d}\mu[w_j]=\int_{\Omega}\varphi\, \mathrm{d}\mu[w], \quad  \forall \varphi\in C_{c}^{\infty}(\Omega).
    \end{equation*}
\end{theoremletter}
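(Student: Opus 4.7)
The plan is to pass to the limit in the distributional identity
\[
\int_\Omega |Dw_j|^{p-2}Dw_j\cdot\nabla\varphi\,\mathrm{d}x = \int_\Omega \varphi\,\mathrm{d}\mu[w_j],\qquad \varphi\in C_c^\infty(\Omega),
\]
by establishing compactness of the nonlinear vector fields $\mathbf{A}_j:=|Dw_j|^{p-2}Dw_j$. First I would localize to $D\Subset \Omega$. Since $w$ is finite a.e.\ and $p$-superharmonic, it belongs to $L^1_{\mathrm{loc}}(\Omega)$, and a standard quasicontinuity/capacity argument together with the pointwise convergence $w_j\to w$ gives a uniform $L^1_{\mathrm{loc}}$-bound on $\{w_j\}$. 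Plugging this into the Kilpel\"ainen--Mal\'y estimate recalled before Definition~\ref{mangai} yields, for each $1\le r<n/(n-1)$, a uniform bound $\||Dw_j|^{p-1}\|_{L^r(D)}\le C$. Consequently $\{\mathbf{A}_j\}$ is bounded in $L^{r/(p-1)}(D)$ and, up to a subsequence, converges weakly to some $\mathbf{F}\in L^{r/(p-1)}(D)$.

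The crucial step is to identify $\mathbf{F}=|Dw|^{p-2}Dw$, equivalently to prove $Dw_j\to Dw$ a.e.\ on $D$. Because $w_j$ and $w$ need not lie in $W^{1,p}_{\mathrm{loc}}$, I would work through the truncations $T_k(w_j)$, which are bounded supersolutions in $W^{1,p}_{\mathrm{loc}}(\Omega)$ and converge pointwise to $T_k(w)$. For fixed $k$, I would test the respective inequalities against $\eta\,T_\delta\bigl(T_k(w_j)-T_k(w)\bigr)$ with a cutoff $\eta\in C_c^\infty(D)$, exploit the strict monotonicity of the vector field $\xi\mapsto|\xi|^{p-2}\xi$, and let first $j\to\infty$ then $\delta\to 0$; this Boccardo--Murat type scheme forces $\nabla T_k(w_j)\to\nabla T_k(w)$ in measure, hence a.e.\ along a subsequence, on $D$. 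Since, by the very definition of the generalized gradient, $\nabla T_k(w_j)=Dw_j\cdot\chi_{\{|w_j|<k\}}$ and similarly for $w$, letting $k\to\infty$ and using that $w,w_j$ are finite a.e.\ yields $Dw_j\to Dw$ a.e.\ on $D$.

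Combining the a.e.\ convergence with the uniform $L^{r/(p-1)}$ bound and continuity of $\xi\mapsto|\xi|^{p-2}\xi$, Vitali's convergence theorem gives $\mathbf{A}_j\to|Dw|^{p-2}Dw$ in $L^1_{\mathrm{loc}}(D)$, so $\mathbf{F}=|Dw|^{p-2}Dw$. By uniqueness of the limit, the full sequence converges, and passing to the limit in the left-hand side of the distributional identity with test function $\varphi$ produces $\int_\Omega\varphi\,\mathrm{d}\mu[w]$, which is precisely the claimed weak convergence $\mu[w_j]\rightharpoonup\mu[w]$.

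The main obstacle is the a.e.\ gradient convergence step: unlike in the linear case, one cannot bypass the nonlinearity by weak convergence alone, and the truncation/monotonicity argument must be carried out carefully because $w_j$ is only a supersolution (the distributional relation is an inequality, not an equality, for $T_k(w_j)$). One must therefore manage the unknown defect measure carried by the truncation level sets $\{w_j=k\}$, which is the essential technical content of the Trudinger--Wang weak continuity result and the reason it does not reduce to a routine Minty--Browder application.
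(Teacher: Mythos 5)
The paper does not prove this statement; Theorem~\ref{weak continuity p-laplacian} is recalled verbatim from Trudinger--Wang \cite{MR1890997} as an external ingredient, so there is no ``paper's own proof'' to compare against. Your proposal is therefore a sketch of a proof of the cited result, and it has to be judged on its own terms.

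As a sketch, it correctly identifies the overall architecture (compactness of the fields $|Dw_j|^{p-2}Dw_j$, a.e.\ convergence of generalized gradients, Vitali), but two steps are left as gaps that are genuinely nontrivial. First, the claim that pointwise a.e.\ convergence to a finite $w$ ``gives a uniform $L^1_{\mathrm{loc}}$-bound'' via a quasicontinuity argument is not quite right as stated: pointwise a.e.\ convergence to a finite limit does \emph{not} by itself bound $L^1_{\mathrm{loc}}$ norms (mass can concentrate). What actually produces the bound is the weak Harnack inequality for nonnegative $p$-superharmonic functions, which controls an $L^s$-average of $w_j$ over a ball by $\operatorname{ess\,inf}$ over a smaller concentric ball; pointwise convergence to a finite limit prevents these infima from diverging. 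This is the device you need to invoke, and without it the compactness in step one is unsupported. Second, and more seriously, the Boccardo--Murat scheme you appeal to is designed for sequences $u_j$ solving $-\operatorname{div}\,a(x,\nabla u_j)=f_j$ with $f_j$ bounded in $L^1$ or a Radon measure with controlled total variation; here $T_k(w_j)$ is only a supersolution, so $-\Delta_p T_k(w_j)=\nu_{j,k}$ with $\nu_{j,k}\ge 0$ an \emph{unknown} measure that in general carries mass on the level set $\{w_j=k\}$ and need not be uniformly controlled in $j$. You acknowledge this (``defect measure'') at the end, but you do not resolve it, and resolving it is precisely the content of the Trudinger--Wang theorem; absent that, the monotonicity/truncation argument does not close. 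In short: the plan is plausible and self-aware, but it reproduces the statement of the difficulty rather than overcoming it, so it does not yet constitute a proof.
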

    
Next, we estate a crucial result on pointwise estimates of nonnegative $p$-superharmonic functions in terms of Wolff's potential due to  T. Kilpel\"{a}inen and J. Mal\'y in \cite[Theorem~1.6]{MR1264000}.

\begin{theoremletter}
    \label{constante Maly}
    Let $w$ be a $p$-superharmonic function in $\mathds{R}^n$ with $\varliminf_{|x|\to \infty}w(x)=0$. If $1 < p<n$ and $\omega=\mu[w]$, that is, $-\Delta_p w=\omega$, then there exists a constant $K\geq 1$ depending only on $n$ and $p$ such that
        \begin{equation*}
            K^{-1}\mathbf{W}_{1,p}\omega(x)\leq w(x)\leq K\,\mathbf{W}_{1,p}\omega(x), \quad \forall  x\in\mathds{R}^n.
        \end{equation*}
\end{theoremletter}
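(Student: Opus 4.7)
The plan is to reduce the global estimate to a localized one on a large ball and then pass to the limit using the condition $\varliminf_{|x|\to\infty}w(x)=0$. For each $R>0$ introduce the truncated Wolff potential
$$\mathbf{W}_{1,p}^R\omega(x)=\int_0^R\left(\frac{\omega(B(x,t))}{t^{n-p}}\right)^{\frac{1}{p-1}}\frac{\mathrm{d} t}{t}.$$
I would first establish, for every $x_0\in\mathds{R}^n$ and $R>0$, the localized two-sided bound
$$K^{-1}\mathbf{W}_{1,p}^R\omega(x_0)\leq w(x_0)-\inf_{B(x_0,2R)} w\leq K\,\mathbf{W}_{1,p}^R\omega(x_0),$$
with $K=K(n,p)$. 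Once this is in hand, the $p$-superharmonic minimum principle together with the boundary condition at infinity forces $\inf_{B(x_0,2R)} w\to 0$ as $R\to\infty$, while $\mathbf{W}_{1,p}^R\omega(x_0)\nearrow \mathbf{W}_{1,p}\omega(x_0)$ by monotone convergence, which closes the global estimate.

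For the lower bound I would work on dyadic balls $B_k=B(x_0,2^{-k}R)$. On each $B_{k-1}$, compare $w-\inf_{B_{k-1}} w$ with the nonnegative $p$-superharmonic Poisson potential $\phi_k$ of $\chi_{B_k}\mathrm{d}\omega$ in $B_{k-1}$: the Kilpel\"ainen--Mal\'y comparison principle (since $\mu[w]\vert_{B_{k-1}}\geq \chi_{B_k}\mathrm{d}\omega$ and $w-\inf_{B_{k-1}} w\geq 0$ on $\partial B_{k-1}$) gives $w(x_0)-\inf_{B_{k-1}} w\geq\phi_k(x_0)$, and the standard pointwise estimate on the $p$-Green function of a ball yields
$$\phi_k(x_0)\geq c\Bigl(\tfrac{\omega(B_k)}{(2^{-k}R)^{n-p}}\Bigr)^{\frac{1}{p-1}}.$$
The nonlinearity of $\Delta_p$ prevents a pure superposition argument across scales, so a Kilpel\"ainen--Mal\'y-style dyadic iteration is needed to telescope the scale-by-scale contributions into the integral $\mathbf{W}_{1,p}^R\omega(x_0)$ up to a dimensional constant.

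The upper bound is substantially more delicate and is the main obstacle. Following Kilpel\"ainen--Mal\'y, I would set $m_k=\inf_{B_k}w$, which is nondecreasing in $k$ with $m_k\to w(x_0)$ by lower semicontinuity (since $x_0\in B_k$), and aim at a one-step recursion of the schematic form
$$m_{k+1}-m_k\leq c\Bigl(\tfrac{\omega(B_k)}{(2^{-k}R)^{n-p}}\Bigr)^{\frac{1}{p-1}}+\tfrac{1}{2}(m_{k+2}-m_{k+1}),$$
derived by comparing $w$ with Poisson-type modifications on the dyadic annuli $B_{k-1}\setminus B_{k+1}$ and exploiting energy estimates for $p$-superharmonic functions. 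Summing in $k\geq 0$, the errors $\tfrac{1}{2}(m_{k+2}-m_{k+1})$ telescope into $\tfrac{1}{2}(w(x_0)-m_1)\leq \tfrac{1}{2}(w(x_0)-m_0)$ and are absorbed into the left-hand side, yielding $w(x_0)-m_0\leq c'\,\mathbf{W}_{1,p}^R\omega(x_0)$. The technical heart is engineering the test functions and cutoffs so that the absorption constant is strictly less than one and the $\omega$-forcing appears with the nonlinear exponent $1/(p-1)$; this algebraic matching is precisely what converts the naively expected Newtonian-type bound into the genuine Wolff potential on the right-hand side.
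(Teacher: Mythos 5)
Two things. First, on the comparison you were asked about: the paper does not prove this statement at all -- Theorem~\ref{constante Maly} is imported verbatim as the global version of the Kilpel\"ainen--Mal\'y pointwise estimate (\cite[Theorem~1.6]{MR1264000}), so the only question is whether your sketch would actually prove it. Your overall scheme -- a local estimate on balls $B(x_0,R)$ with the truncated potential $\mathbf{W}_{1,p}^{R}\omega$, followed by $R\to\infty$ using $\varliminf_{|x|\to\infty}w=0$ and monotone convergence -- is indeed the standard route to the global statement.

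However, your pivotal intermediate inequality is false, and this is a genuine gap. The upper half, $w(x_0)-\inf_{B(x_0,2R)}w\leq K\,\mathbf{W}_{1,p}^{R}\omega(x_0)$, fails even for functions satisfying the hypotheses of the theorem: take $\omega=\delta_{y}$ with $|y-x_0|=10R$, so that $w$ behaves like the fundamental solution centered at $y$; then $\omega(B(x_0,t))=0$ for all $t\leq R$, hence $\mathbf{W}_{1,p}^{R}\omega(x_0)=0$, while $w$ is nonconstant on $B(x_0,2R)$ and the left-hand side is strictly positive. (Equivalently, with $\omega=0$ any nonconstant positive $p$-harmonic function violates the claim locally.) The same example destroys the schematic recursion $m_{k+1}-m_k\leq c\,\big(\omega(B_k)/(2^{-k}R)^{n-p}\big)^{1/(p-1)}+\tfrac12(m_{k+2}-m_{k+1})$: with $\omega(B_k)=0$ and $m_k\to w(x_0)$ it forces all increments to vanish, i.e.\ $w(x_0)=\inf_{B(x_0,R)}w$, which is wrong. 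The point you are missing is that the ``$p$-harmonic part'' of $w$ near $x_0$ cannot be absorbed with constant $1$; the correct Kilpel\"ainen--Mal\'y local estimate reads $w(x_0)\leq C\,\inf_{B(x_0,R)}w+C\,\mathbf{W}_{1,p}^{2R}\omega(x_0)$ with $C=C(n,p)>1$ coming from a weak Harnack inequality, and their iteration runs on carefully constructed level-set quantities, not on the raw infima $m_k$. Fortunately, the weaker (correct) form is all your limiting argument needs: the minimum principle on large balls together with $\varliminf_{|x|\to\infty}w=0$ gives $w\geq 0$ and $\inf_{B(x_0,R)}w\to\inf_{\mathds{R}^n}w=0$, so letting $R\to\infty$ still yields $w(x_0)\leq C\,\mathbf{W}_{1,p}\omega(x_0)$; the lower bound, as you note, also requires the dyadic iteration rather than superposition, but your sketch of it is consistent with the cited proof. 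If you repair the upper local estimate as indicated (keeping the constant $C>1$ in front of the infimum), your argument reduces correctly to the local theorem of Kilpel\"ainen--Mal\'y, which is exactly what the paper cites.
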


We note that we can assume, without loss of generality,  that any $p$-superharmonic function $w$ in $\mathds{R}^n$ can be chosen to be quasicontinuous in $\mathds{R}^n$, that is, $w$ is a continuous function in $\mathds{R}^n$ except in a set of $p$-capacity zero, see more details in \cite[Chapter~7]{MR2305115}.

%%%%%%%%%%%%%%%%%%%%%%%%%%%%%%%%%%%%%%%%%%%%%%%%%%%%%%%%%%%%
\section{Systems of Wolff potentials}\label{section3}
%%%%%%%%%%%%%%%%%%%%%%%%%%%%%%%%%%%%%%%%%%%%%%%%%%%%%%%%%%%%

This section will consider supersolutions to the integral Syst.~\eqref{sistemawolff}. 
\begin{definition}
    We say that a pair of nonnegative functions $(v,u) \in L_{\mathrm{loc}}^{q_1}(\mathds{R}^n,\, \mathrm{d}\sigma)\times L_{\mathrm{loc}}^{q_2}(\mathds{R}^n,\, \mathrm{d} \sigma)$ is a \textit{supersolution} to \eqref{sistemawolff} if satisfies (pointwise)
\begin{equation}\label{sistemawolffsuper}
    \left\{ 
\begin{aligned}
& u(x) \geq  \mathbf{W}_{\alpha,p}\left(v^{q_1}\mathrm{d} \sigma\right)(x), \quad \mathrm{d}\sigma \mbox{-a.e in } \mathds{R}^n,\\
& v(x)\geq  \mathbf{W}_{\alpha,p}\left(u^{q_2}\mathrm{d} \sigma\right)(x),\quad \mathrm{d}\sigma \mbox{-a.e in } \mathds{R}^n.
\end{aligned}
\right.
\end{equation}
The notion of \textit{solution} or \textit{subsolution} to \eqref{sistemawolff} is defined similarly by replacing ``$\geq$'' by ``$=$'' or ``$\leq$'' in \eqref{sistemawolffsuper}, respectively. 
\end{definition}
We start showing that if there exists a nontrivial supersolution $(u,v)$ to \eqref{sistemawolff}, then $\sigma$ must be absolutely continuous with respect to the $(\alpha, p)$-capacity $\mathrm{cap}_{\alpha,p}$, see \eqref{capa}. In view of Theorem~\ref{constante Maly} and next lemma we see that if Syst.~\eqref{plaplaciansystem} has a nontrivial $p$-superharmonic supersolution, then $\sigma$ is absolutely continuous with respect to the $p$-capacity $\mathrm{cap}_p$, since $\mathrm{cap}_{p}(E) \approx \mathrm{cap}_{1,p}(E)$ for all compact sets $E$.
 \begin{lemma}\label{Lemma sigma abs cont}
    Let $1< p<\infty,\; 0<q_i<p - 1,\; i=1,\, 2,\;0<\alpha<{n}/{p}$, and $\sigma\in {M}^+(\mathds{R}^n)$. Suppose there is a nontrivial supersolution $(u,v)$ to \eqref{sistemawolff}. Then there exists a positive constant $C$ depending only on $n, p, q_1,q_2$ and  $\alpha$ such that  for every compact set $E\subset\mathds{R}^n$,
    \begin{equation}\label{sigma abs cont necessarily}
    \begin{aligned}
      \sigma(E)&\leq C\Big[\mathrm{cap}_{\alpha,p}(E)^{\frac{q_1}{p-1}}\Big(\int_E v^{q_1}\,\mathrm{d}\sigma\Big)^{\frac{p-1}{p-1-q_1}}+ \mathrm{cap}_{\alpha,p}(E)^{\frac{q_2}{p-1}}\Big(\int_E u^{q_2}\,\mathrm{d}\sigma\Big)^{\frac{p-1}{p-1-q_2}} \Big].
    \end{aligned}
      \end{equation}
 \end{lemma}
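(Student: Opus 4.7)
The central tool is the Wolff capacity--energy inequality: for any compactly supported $\omega\in M^+(\mathds{R}^n)$ and every compact set $E\supseteq \mathrm{supp}\,\omega$,
\begin{equation*}
\omega(E)^{p}\;\leq\; C(n,p,\alpha)\,\mathrm{cap}_{\alpha,p}(E)\,\Big(\int_{E} \mathbf{W}_{\alpha,p}\omega\, d\omega\Big)^{p-1},
\end{equation*}
which follows by combining the dual description of $(\alpha,p)$-capacity through Riesz potentials ($\mu(E)\leq \mathrm{cap}_{\alpha,p}(E)^{1/p}\|\mathbf{I}_{\alpha}\mu\|_{p'}$) with Wolff's inequality $\|\mathbf{I}_{\alpha}\omega\|_{p'}^{\,p'}\!\approx \int \mathbf{W}_{\alpha,p}\omega\, d\omega$. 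I would cite this as a standard consequence of the Hedberg--Wolff theory (e.g.\ \cite{MR1411441}).

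First I would apply the inequality to the measure $\omega_{1}:=\chi_{E}\,v^{q_{1}}\,d\sigma$, so that $\omega_{1}(E)=\int_{E}v^{q_{1}}\,d\sigma$. Since $\omega_{1}\ll\sigma$, the supersolution bound $u\geq \mathbf{W}_{\alpha,p}(v^{q_{1}}\,d\sigma)\geq \mathbf{W}_{\alpha,p}(\omega_{1})$, valid $\sigma$-a.e., also holds $\omega_{1}$-a.e., and substituting gives
\begin{equation*}
\Big(\int_{E}v^{q_{1}}\,d\sigma\Big)^{p}\;\leq\; C\,\mathrm{cap}_{\alpha,p}(E)\,\Big(\int_{E}u\,v^{q_{1}}\,d\sigma\Big)^{p-1}.
\end{equation*}
Symmetrically, with $\omega_{2}:=\chi_{E}\,u^{q_{2}}\,d\sigma$ and the companion supersolution $v\geq \mathbf{W}_{\alpha,p}(u^{q_{2}}\,d\sigma)$, I would obtain
\begin{equation*}
\Big(\int_{E}u^{q_{2}}\,d\sigma\Big)^{p}\;\leq\; C\,\mathrm{cap}_{\alpha,p}(E)\,\Big(\int_{E}v\,u^{q_{2}}\,d\sigma\Big)^{p-1}.
\end{equation*}

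The final step is to bound the cross-integrals $\int_{E}uv^{q_{1}}\,d\sigma$ and $\int_{E}vu^{q_{2}}\,d\sigma$ by combinations of $\sigma(E)$, $\int_{E}v^{q_{1}}d\sigma$, and $\int_{E}u^{q_{2}}d\sigma$, using H\"older's inequality with the conjugate exponents $(p-1)/q_{i}$ and $(p-1)/(p-1-q_{i})$; these are precisely the exponents that conjure up the powers $q_{i}/(p-1)$ on the capacity factor and $(p-1)/(p-1-q_{i})$ on the local integrals appearing in the target. Inserting the H\"older bounds into the two displayed inequalities and solving for $\sigma(E)$ in each, the claimed sum estimate emerges by an either/or argument: whichever of the two local mass quantities dominates provides the corresponding summand. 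The main obstacle is exactly this last H\"older manipulation, since $0<q_{i}<p-1$ may be strictly less than one, forbidding a direct H\"older split with exponent $q_{i}$; this is overcome by a two-stage application combining H\"older with a Young-type inequality and exploiting the coupling of the two supersolution inequalities (so that a \emph{lower} bound on $v^{q_{1}}$ or $u^{q_{2}}$ via the Wolff potential of the other compensates for the missing integrability).
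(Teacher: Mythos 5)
Your first two steps are sound: the capacity--energy inequality you quote does follow from the dual description of $\mathrm{cap}_{\alpha,p}$ together with the Hedberg--Wolff energy equivalence, and applying it to $\omega_1=\chi_E v^{q_1}\,\mathrm{d}\sigma$ and $\omega_2=\chi_E u^{q_2}\,\mathrm{d}\sigma$ with the supersolution bounds $u\geq \mathbf{W}_{\alpha,p}\omega_1$, $v\geq \mathbf{W}_{\alpha,p}\omega_2$ correctly yields
\begin{equation*}
\Big(\int_E v^{q_1}\,\mathrm{d}\sigma\Big)^{p}\leq C\,\mathrm{cap}_{\alpha,p}(E)\Big(\int_E u\,v^{q_1}\,\mathrm{d}\sigma\Big)^{p-1},
\qquad
\Big(\int_E u^{q_2}\,\mathrm{d}\sigma\Big)^{p}\leq C\,\mathrm{cap}_{\alpha,p}(E)\Big(\int_E v\,u^{q_2}\,\mathrm{d}\sigma\Big)^{p-1}.
\end{equation*}
The gap is in the final step, which is the only place where $\sigma(E)$ could enter, and it is exactly the step you leave as a sketch. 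Neither of the two displayed inequalities contains $\sigma(E)$, and any H\"older estimate of the cross-integrals $\int_E u\,v^{q_1}\,\mathrm{d}\sigma$, $\int_E v\,u^{q_2}\,\mathrm{d}\sigma$ against $\sigma(E)$ places $\sigma(E)$ on the \emph{larger} side of the inequality; substituting such a bound and ``solving for $\sigma(E)$'' then produces a \emph{lower} bound for $\sigma(E)$, not the upper bound \eqref{sigma abs cont necessarily}. To bound the plain mass $\sigma(E)$ from above you must write $1$ as a product of a negative and a positive power of $v$ (or $u$) and control the negative-power integral by capacity; upper bounds on the weighted masses $\int_E v^{q_1}\,\mathrm{d}\sigma$, $\int_E u^{q_2}\,\mathrm{d}\sigma$ alone cannot rule out $\sigma$ being huge where $u,v$ are tiny. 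The appeal to ``a two-stage H\"older/Young argument exploiting the coupling'' does not supply this mechanism.

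The missing tool is Verbitsky's reverse capacitary inequality \cite[Theorem~1.11]{MR1747901}: for every $\mu\in M^+(\mathds{R}^n)$,
\begin{equation*}
\int_E \frac{\mathrm{d}\mu}{\big(\mathbf{W}_{\alpha,p}\mu\big)^{p-1}}\leq C_0\,\mathrm{cap}_{\alpha,p}(E),
\end{equation*}
which is a genuinely different statement from the capacity--energy inequality and does not follow from it by H\"older. The paper applies it with $\mathrm{d}\mu=v^{q_1}\mathrm{d}\sigma$ (resp.\ $u^{q_2}\mathrm{d}\sigma$), uses $u\geq \mathbf{W}_{\alpha,p}(v^{q_1}\mathrm{d}\sigma)$ to get $\int_E v^{q_1}u^{-(p-1)}\,\mathrm{d}\sigma\leq C_0\,\mathrm{cap}_{\alpha,p}(E)$, splits $E$ into $\{v\geq u\}$ and $\{u\geq v\}$ so that on each piece the mixed negative power becomes $v^{q_1-p+1}$ (resp.\ $u^{q_2-p+1}$), and then writes $\sigma(E\cap\{v\geq u\})=\int v^{-\beta}v^{\beta}\,\mathrm{d}\sigma$ with $\beta=q_1(p-1-q_1)/(p-1)$ and applies H\"older with exponents $(p-1)/q_1$ and $(p-1)/(p-1-q_1)$. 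None of these ingredients (the reverse inequality, the set splitting, the factorization of $1$) appears in your proposal, so as written it cannot reach the conclusion. A minor further point: this route gives the local integrals to the power $(p-1-q_i)/(p-1)$, whereas the lemma as stated displays $(p-1)/(p-1-q_i)$ (an internal discrepancy of the paper), so your claim that your H\"older exponents ``conjure up'' the stated powers is also unsubstantiated.
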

\begin{proof}
We first recall the following result \cite[Theorem~1.11]{MR1747901}: for any $\mu\in {M}^+(\mathds{R}^n)$ it holds 
\begin{equation*}
\int_E \frac{\mathrm{d} \mu}{\left(\mathbf{W}_{\alpha,p}\mu\right)^{p-1}}\leq C_0 \,\mathrm{cap}_{\alpha,p}(E), 
\end{equation*}
where $C_0=C_0(n,p,\alpha)$ is a positive constant.
Taking $\mathrm{d}\mu=v^{q_1}\mathrm{d}\sigma$, we obtain
\begin{equation*}
   \int_Ev^{q_1}u^{-(p-1)}\,\mathrm{d}\sigma \leq \int_{E}\frac{\mathrm{d}\mu}{(\mathbf{W}_{\alpha,p}\mu)^{p-1}}\leq C_0\,\mathrm{cap}_{\alpha,p}(E),
\end{equation*}
since $u\geq \mathbf{W}_{\alpha,p}\mu$. 
Thus,
\begin{align}\label{estimate3}
    \int_{E\cap\{v\geq u\}} v^{q_1-p+1}\,\mathrm{d} \sigma &\leq \int_{E\cap\{v\geq u\}} v^{q_1}u^{-(p-1)}\,\mathrm{d} \sigma\nonumber\\
    & \leq \int_E v^{q_1}u^{-(p-1)}\,\mathrm{d} \sigma\leq C_0\,\mathrm{cap}_{\alpha,p}(E).
\end{align}
Setting $\beta=q_1(p-1-q_1)/(p-1)$ and using the H\"{o}lder's inequality with exponents $r=(p-1)/q_1$ and $r'=(p-1)/(p-1-q_1)$, we deduce
\begin{align*}
     \sigma(E\cap\{v\geq u\})&= \int_{E\cap\{v\geq u\}}v^{-\beta}v^{\beta}\,\mathrm{d} \sigma\\
    &\leq \left(\int_{E\cap\{v\geq u\}} v^{-\beta r}\,\mathrm{d}\sigma\right)^{\frac{1}{r}}\left(\int_{E\cap\{v\geq u\}} v^{\beta r'}\,\mathrm{d}\sigma\right)^{\frac{1}{r'}} \\
    &=\left(\int_{E\cap\{v\geq u\}} v^{q_1-p+1}\,\mathrm{d}\sigma\right)^{\frac{q_1}{p-1}}\left(\int_{E\cap\{v\geq u\}} v^{q_1}\,\mathrm{d}\sigma\right)^{\frac{p-1-q_1}{p-1}},
\end{align*}
since $-\beta r=q_1-p+1$ and $\beta r'=q_1$.
By \eqref{estimate3},
\begin{equation*}
    \sigma(E\cap\{v\geq u\})\leq C_0^{\frac{q_1}{p-1}}\,\left(\mathrm{cap}_{\alpha,p}(E)\right)^{\frac{q_1}{p-1}}\left(\int_E v^{q_1}\,\mathrm{d}\sigma\right)^{\frac{p-1-q_1}{p-1}}.
\end{equation*}
Similarly, we also obtain
\begin{equation*}
    \int_{E\cap \{u\geq v\}} u^{q_2-p+1}\,\mathrm{d}\sigma \leq C_0\,\mathrm{cap}_{\alpha,p}(E),
\end{equation*}
and consequently
\begin{equation*}
    \sigma(E\cap\{u\geq v\})\leq C_0^{\frac{q_2}{p-1}}\,\left(\mathrm{cap}_{\alpha,p}(E)\right)^{\frac{q_2}{p-1}}\left(\int_{E} u^{q_2}\,\mathrm{d}\sigma\right)^{\frac{p-1-q_2}{p-1}}.
\end{equation*}
Since $\sigma(E)\leq \sigma(E\cap\{v\geq u\})+\sigma(E\cap\{u\geq v\})$, picking $C=\max\{C_0, 1\}$,  \eqref{sigma abs cont necessarily} follows.
\end{proof}

Notice that in Syst.~\eqref{sistemawolffsuper}, placing the second inequality  $v\geq \mathbf{W}_{\alpha,p}(u^{q}\mathrm{d}\sigma)$ in the first one,  we obtain
\begin{equation}\label{double inequality wolff}
    u\geq\mathbf{W}_{\alpha,p}\left(\left(\mathbf{W}_{\alpha,p}(u^{q_2}\mathrm{d}\sigma)\right)^{q_1}\mathrm{d}\sigma\right) \quad \mathrm{d}\sigma \mbox{-a.e in } \mathds{R}^n.
   \end{equation}
Thus, if $(u,v)$ solves \eqref{sistemawolffsuper}, then $u$ solves \eqref{double inequality wolff}. In the next Lemma, we will use this perception to obtain a lower bound for supersolutions of \eqref{sistemawolff} in terms of Wolff potentials. 

\begin{lemma}\label{estimativainferior}
   If $(u,v)$ is a nontrivial supersolution to \eqref{sistemawolff}, then there exist a positive constant $c$, which depends  only on $n,\,p,\,\alpha,\,q_1$ and $q_2$, such that the inequalities below holds
   \begin{equation*}
\begin{aligned}
& u(x) \geq  c\left(\mathbf{W}_{\alpha,p}\sigma(x)\right)^{\frac{(p-1)(p-1+q_1)}{(p-1)^2-q_1q_2}}, &\quad \mathrm{d}\sigma \mbox{-a.e in } \mathds{R}^n,\\
& v(x)\geq  c\left(\mathbf{W}_{\alpha,p}\sigma(x)\right)^{\frac{(p-1)(p-1+q_2)}{(p-1)^2-q_1q_2}}, & \quad \mathrm{d}\sigma \mbox{-a.e in } \mathds{R}^n.
\end{aligned}
   \end{equation*}
\end{lemma}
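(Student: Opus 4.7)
The plan is to couple the two supersolution inequalities into a self-contained bound on $u$ (respectively $v$) and to extract the claimed power of $\mathbf{W}_{\alpha,p}\sigma$ by a monotone Wolff-potential iteration. Substituting the second inequality into the first yields
\[
u(x)\geq \mathbf{W}_{\alpha,p}\!\left(\left(\mathbf{W}_{\alpha,p}(u^{q_2}\,\mathrm{d}\sigma)\right)^{q_1}\mathrm{d}\sigma\right)(x),\qquad \mathrm{d}\sigma\text{-a.e.},
\]
and symmetrically for $v$. The analysis then proceeds entirely from this double inequality.

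The main technical input I would establish first is an elementary lemma: for every $\mu\in {M}^+(\mathds{R}^n)$ and $\beta\geq 0$,
\[
\mathbf{W}_{\alpha,p}\!\left((\mathbf{W}_{\alpha,p}\mu)^\beta\,\mathrm{d}\mu\right)(x)\geq c_0\,\bigl(\mathbf{W}_{\alpha,p}\mu(x)\bigr)^{1+\beta/(p-1)},
\]
with $c_0=c_0(n,p,\alpha,\beta)>0$. To prove it, observe that if $y\in B(x,t/2)$ and $s\geq t$, then $B(y,s)\supset B(x,s/2)$, whence $\mathbf{W}_{\alpha,p}\mu(y)\geq c\, H(t/2)$ for $H(\tau)=\int_\tau^\infty(\mu(B(x,r))/r^{n-\alpha p})^{1/(p-1)}\,\mathrm{d}r/r$. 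Plugging this into the outer Wolff potential and changing variables reduces the integral to $\int_0^\infty H^{\beta/(p-1)}(-\mathrm{d}H)$; an explicit antiderivative, together with $H(0)=\mathbf{W}_{\alpha,p}\mu(x)$, gives the conclusion.

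Next, for the iteration, nontriviality of $(u,v)$ combined with the lower semicontinuity and strict positivity of Wolff potentials supplies a compact $K$ with $\sigma(K)>0$ and $\varepsilon>0$ such that $u,v\geq \varepsilon$ on $K$ $\sigma$-a.e. Writing $\sigma_K=\chi_K\sigma$ and feeding this seed through the two supersolution inequalities, one obtains inductively
\[
u\geq C^{(1)}_k(\mathbf{W}_{\alpha,p}\sigma_K)^{a_k},\qquad v\geq C^{(2)}_k(\mathbf{W}_{\alpha,p}\sigma_K)^{b_k},
\]
with the affine recursion $a_{k+1}=1+q_1 b_k/(p-1)$, $b_{k+1}=1+q_2 a_k/(p-1)$. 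Since $q_1q_2<(p-1)^2$, the iteration is a contraction whose fixed point is exactly $\bigl(\tfrac{(p-1)(p-1+q_1)}{(p-1)^2-q_1q_2},\tfrac{(p-1)(p-1+q_2)}{(p-1)^2-q_1q_2}\bigr)$. Passing to $k\to\infty$ and then to $K\nearrow\mathds{R}^n$ (using monotone convergence $\mathbf{W}_{\alpha,p}\sigma_K\nearrow \mathbf{W}_{\alpha,p}\sigma$ guaranteed by \eqref{general wolff finite}) produces the two lower bounds stated in the lemma.

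The principal obstacle I anticipate is the bookkeeping of multiplicative constants $C^{(i)}_k$, which could a priori collapse to $0$ along the iteration. The crucial observation is that $C^{(1)}_k$ obeys a two-step recursion of the form $C^{(1)}_k\geq D\,(C^{(1)}_{k-2})^{q_1q_2/(p-1)^2}$ with $D>0$ inherited from the lemma above; since $q_1q_2/(p-1)^2<1$, taking logarithms shows that $C^{(1)}_k$ converges to a strictly positive limit. This is precisely the place where the sub-natural hypothesis is essential. A secondary point is to choose $K$ large enough that the base constant $\varepsilon$ does not degenerate as $K\nearrow\mathds{R}^n$, which is arranged by selecting $K$ so that a fixed fraction of $\mathbf{W}_{\alpha,p}\sigma$ is already captured by $\mathbf{W}_{\alpha,p}\sigma_K$ near the point of interest.
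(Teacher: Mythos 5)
Your proposal is correct and follows essentially the same route as the paper: substitute one supersolution inequality into the other, apply the elementary lower bound $\mathbf{W}_{\alpha,p}\bigl((\mathbf{W}_{\alpha,p}\mu)^{\beta}\mathrm{d}\mu\bigr)\gtrsim(\mathbf{W}_{\alpha,p}\mu)^{1+\beta/(p-1)}$ (the paper cites this as \cite[Lemma~3.5]{MR3567503}), iterate the affine recursion on exponents to its fixed point $(\gamma_1,\gamma_2)$, and observe that the multiplicative constants converge to a positive limit because $q_1q_2/(p-1)^2<1$. Two remarks. First, the paper's seeding is slightly more direct than yours: rather than extracting a compact $K$ where $u,v\geq\varepsilon$ from lower semicontinuity, it fixes $B=B(0,R)$ with $R>|x|$ and bounds $\mathbf{W}_{\alpha,p}(u^{q_2}\mathrm{d}\sigma_B)$ from below on $B$ by the explicit tail quantity $A(R)\propto\bigl(\int_B u^{q_2}\mathrm{d}\sigma\bigr)^{1/(p-1)}\int_R^\infty s^{-(n-\alpha p)/(p-1)}\tfrac{\mathrm{d}s}{s}$, which is positive once $R$ is large enough that $\int_B u^{q_2}\mathrm{d}\sigma>0$; either seeding is fine. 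Second, your closing ``secondary point'' about choosing $K$ so that $\varepsilon$ does not degenerate is a red herring: because the two-step recursion has exponent $q_1q_2/(p-1)^2<1$, the $k\to\infty$ limit of $C^{(1)}_k$ is \emph{independent} of the seed constant, so no care in selecting $K$ (or $R$) is needed. Indeed in the paper's version $A(R)\to 0$ as $R\to\infty$, yet the limit constant $C$ in \eqref{gamma1} depends only on $\kappa$, $n$, $p$, $\alpha$, $q_1$, $q_2$ — the paper relies on exactly this wash-out and then sends $R\to\infty$ (equivalently $K\nearrow\mathds{R}^n$) harmlessly.
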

Before proving Lemma~\ref{estimativainferior}, let us recall the following result \cite[Lemma~3.5]{MR3567503}.

\begin{lemmaletter}
    \label{estimativekappa}
    Let $\omega\in M^+(\mathds{R}^n)$. For every $r>0$ and for all $x\in \mathds{R}^n$, it holds
    \begin{equation}
        \mathbf{W}_{\alpha,p}\left((\mathbf{W}_{\alpha,p}\omega)^r\mathrm{d} \omega\right)(x)\geq \kappa^{\frac{r}{p-1}}\left(\mathbf{W}_{\alpha,p}\omega(x)\right)^{\frac{r}{p-1}+1},
    \end{equation}
    where $\kappa$ depends only on $n, p$ and $\alpha$.
\end{lemmaletter}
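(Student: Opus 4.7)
My plan is to establish the inequality by comparing the Wolff potential at points near $x$ with the global Wolff potential restricted to a tail, then reducing the resulting iterated integral to a telescoping one-variable integral. Write $W = \mathbf{W}_{\alpha,p}\omega$ for brevity and, for $x\in\mathds{R}^n$, set
\begin{equation*}
g(t)=\left(\frac{\omega(B(x,t))}{t^{n-\alpha p}}\right)^{\frac{1}{p-1}}\frac{1}{t},\qquad F(t)=\int_t^\infty g(s)\,\mathrm{d} s,
\end{equation*}
so that $F(0)=W(x)$, $F(\infty)=0$ and $F'=-g$.

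The key geometric step is a lower bound for $W(y)$ when $y\in B(x,t)$. Since $s\geq 2t$ gives $B(y,s)\supseteq B(x,s-t)\supseteq B(x,s/2)$, I obtain
\begin{equation*}
W(y)\geq \int_{2t}^\infty\left(\frac{\omega(B(x,s/2))}{s^{n-\alpha p}}\right)^{\frac{1}{p-1}}\frac{\mathrm{d} s}{s}=2^{-\frac{n-\alpha p}{p-1}}F(t)=c_1 F(t),
\end{equation*}
after changing variables $s\mapsto 2s$. The constant $c_1=2^{-(n-\alpha p)/(p-1)}$ depends only on $n,p,\alpha$.

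Using this pointwise bound, I can estimate
\begin{equation*}
\int_{B(x,t)}W^r\,\mathrm{d}\omega\geq c_1^r F(t)^r\,\omega(B(x,t)),
\end{equation*}
and therefore
\begin{equation*}
\mathbf{W}_{\alpha,p}(W^r\mathrm{d}\omega)(x)\geq c_1^{\frac{r}{p-1}}\int_0^\infty F(t)^{\frac{r}{p-1}}g(t)\,\mathrm{d} t.
\end{equation*}
Setting $\beta=r/(p-1)$, the identity $g(t)\,\mathrm{d} t=-\mathrm{d} F(t)$ yields the closed-form computation
\begin{equation*}
\int_0^\infty F(t)^\beta g(t)\,\mathrm{d} t=-\int_{F(0)}^{0}F^\beta\,\mathrm{d} F=\frac{W(x)^{\beta+1}}{\beta+1}=\frac{p-1}{r+p-1}\,W(x)^{\frac{r}{p-1}+1}.
\end{equation*}

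To put this in the form stated in the lemma, I need to absorb the polynomial factor $(p-1)/(r+p-1)$ into an exponential $\kappa^{r/(p-1)}$. Choosing $\kappa=c_1/e$, the elementary inequality $s\geq \log(1+s)$ for $s\geq 0$ (applied with $s=r/(p-1)$) gives $(c_1/\kappa)^{r/(p-1)}\geq 1+r/(p-1)=(r+p-1)/(p-1)$, whence
\begin{equation*}
\kappa^{\frac{r}{p-1}}\leq \frac{c_1^{\frac{r}{p-1}}(p-1)}{r+p-1},
\end{equation*}
and the desired bound follows. The only real obstacle in the argument is the transition from the explicit constant $c_1^{r/(p-1)}(p-1)/(r+p-1)$ to a single exponential constant $\kappa^{r/(p-1)}$; the calibration $\kappa=c_1/e$ handles this uniformly for all $r>0$, and $\kappa$ depends only on $n,p,\alpha$ as required.
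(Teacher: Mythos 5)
Your proof is correct: the containment $B(y,s)\supseteq B(x,s/2)$ for $y\in B(x,t)$, $s\geq 2t$ gives the pointwise tail bound $W(y)\geq 2^{-\frac{n-\alpha p}{p-1}}F(t)$, the substitution into the outer potential reduces everything to $\int_0^\infty F^{\beta}g\,\mathrm{d} t=\frac{W(x)^{\beta+1}}{\beta+1}$ with $\beta=\frac{r}{p-1}$, and the calibration $\kappa=2^{-\frac{n-\alpha p}{p-1}}/e$ together with $e^{s}\geq 1+s$ legitimately absorbs the factor $\frac{p-1}{r+p-1}$ into a constant of the required exponential form, uniformly in $r>0$. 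Note that the paper does not prove this lemma at all but quotes it from \cite[Lemma~3.5]{MR3567503}; your argument is essentially the standard proof behind that citation (tail estimate on the ball plus the exact telescoping integral), so it is a correct, self-contained substitute for the reference.
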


\begin{proof}[Proof of Lemma~\ref{estimativainferior}]
We will prove the lower estimate for $u$, the first coordinate of $(u,v)$. The lower estimate for $v$ is entirely analogous. First, we prove the following claim.
%\medskip 
\begin{Claim}\label{claim1} Let $\sigma\in M^+(\mathds{R}^n)$ and let $w$ be a nontrivial solution to \eqref{double inequality wolff}. 
If there exist $c>0$ and $\delta>0$ such that 
\begin{equation*}
    w(x)\geq c\left(\mathbf{W}_{\alpha,p}\sigma(x)\right)^{\delta}, \quad \forall  x\in\mathds{R}^n,
\end{equation*}
then  
\begin{equation}
    w(x)\geq c^{\frac{q_1q_2}{(p-1)^2}}\kappa^{\frac{q_1(p-1 + 2q_2 \delta)}{(p-1)^2}}\left(\mathbf{W}_{\alpha,p}\sigma(x)\right)^{\frac{q_1}{p-1}\left(\frac{q_2}{p-1}\delta+1\right)+1}, \quad \forall x \in \mathds{R}^n,
\end{equation}
where $\kappa=\kappa(n,p,\alpha)>0$ is the constant in Lemma \ref{estimativekappa}.
\end{Claim}
\noindent Indeed, if $w\geq c\left(\mathbf{W}_{\alpha,p}\sigma\right)^{\delta}$, we can estimate for all $x\in\mathds{R}^n,$
\begin{align*}
   w(x) & = \mathbf{W}_{\alpha,p}\left(\left(\mathbf{W}_{\alpha,p}(u^{q_2}\mathrm{d} \sigma)\right)^{q_1}\mathrm{d} \sigma\right)(x)\\
    & \geq  c^{\frac{q_1q_2}{(p-1)^2}}\mathbf{W}_{\alpha,p}\left[\left(\mathbf{W}_{\alpha,p}(\mathbf{W}_{\alpha,p}\sigma)^{q_2\delta}\mathrm{d}\sigma)\right)^{q_1}\mathrm{d} \sigma\right](x).
\end{align*}
Using Lemma \ref{estimativekappa} twice with $\omega=\sigma$,  $r=q_2 \delta$ and $r=(\delta q_2/(p-1)+1)q_1$ respectively in the previous estimate, we deduce that
\begin{align*}
    w(x) &  \geq  c^{\frac{q_1q_2}{(p-1)^2}}\mathbf{W}_{\alpha,p}\left[\left(\kappa^{\frac{q_2}{p-1}\delta}\left(\mathbf{W}_{\alpha,p}\sigma\right)^{\frac{q_2}{p-1}\delta+1}\right)^{q_1}\mathrm{d} \sigma\right](x)\\
    & = c^{\frac{q_1q_2}{(p-1)^2}}\kappa^{\frac{q_1q_2}{(p-1)^2}\delta}\mathbf{W}_{\alpha,p}\left[\left(\mathbf{W}_{\alpha,p}\sigma\right)^{\left(\frac{q_2}{p-1}\delta+1\right)q_1}\mathrm{d} \sigma\right](x)
    \\
    & \geq c^{\frac{q_1q_2}{(p-1)^2}}\kappa^{\frac{q_1q_2}{(p-1)^2}\delta}\kappa^{\frac{q_1}{p-1}\left(\frac{q_2}{p-1}\delta+1\right)}\left(\mathbf{W}_{\alpha,p}\sigma(x)\right)^{\frac{q_1}{p-1}\left(\frac{q_2}{p-1}\delta+1\right)+1},
\end{align*}
which completes the proof of Claim~\ref{claim1}.

Now, let us fix arbitrarily $x\in\mathds{R}^n$ and $R>|x|$. Let $B=B(0,R)$, $\sigma_B=\chi_B\sigma$ and $\mu \in M^+(\mathds{R}^n)$ defined by $\mu=u^{q_2}\sigma_B$. From \eqref{double inequality wolff}, 
\begin{align}
        u(x)&\geq \mathbf{W}_{\alpha,p}\left(\left(\mathbf{W}_{\alpha,p}(u^{q_2}\mathrm{d} \sigma)\right)^{q_1}\mathrm{d} \sigma\right)(x) \nonumber\\
        &\geq \mathbf{W}_{\alpha,p}\left(\left(\mathbf{W}_{\alpha,p}\mu\right)^{q_1}\mathrm{d} \sigma\right)(x)  \nonumber\\
        &\geq \mathbf{W}_{\alpha,p}\left(\left(\mathbf{W}_{\alpha,p}\mu\right)^{q_1}\mathrm{d} \sigma_B\right)(x). \label{estimativalemmainferior1}
\end{align}
We first obtain a lower bound for $\mathbf{W}_{\alpha,p}\mu(z)$,
\begin{align*}
   \mathbf{W}_{\alpha,p}\mu(z)&=\int_0^{\infty}\left(\frac{\mu (B(z,t))}{t^{n-\alpha p}}\right)^{\frac{1}{p-1}}\frac{\mathrm{d} t}{t} \geq \int_R^{\infty}\left(\frac{\mu (B(z,t))}{t^{n-\alpha p}}\right)^{\frac{1}{p-1}}\frac{\mathrm{d} t}{t} \\
    &= c_0 \int_R^{\infty}\left(\frac{\mu (B(z,2s))}{s^{n-\alpha p}}\right)^{\frac{1}{p-1}}\frac{\mathrm{d} s}{s},
\end{align*}
where $c_0=2^{-(n-\alpha p)/(p-1)}$. Note that if $z\in B$ and $s\geq R$, $B(0,s)\subset B(z,2s)$. Thus for $z\in B$
\begin{align}
     \mathbf{W}_{\alpha,p}\mu(z)& \geq  2^{-\frac{n-\alpha p}{p-1}} \int_R^{\infty}\left(\frac{\mu (B(0,s))}{s^{n-\alpha p}}\right)^{\frac{1}{p-1}}\frac{\mathrm{d} s}{s} = 2^{-\frac{n-\alpha p}{p-1}} \int_R^{\infty}\left(\frac{\int_{B(0,s)}u^{q_2}\mathrm{d} \sigma_B}{s^{n-\alpha p}}\right)^{\frac{1}{p-1}}\frac{\mathrm{d} s}{s} \nonumber\\
     & =2^{-\frac{n-\alpha p}{p-1}}\int_R^{\infty}\left(\frac{\int_{B(0,s)\cap B}u^{q_2}\mathrm{d} \sigma}{s^{n-\alpha p}}\right)^{\frac{1}{p-1}}\frac{\mathrm{d} s}{s} = 2^{-\frac{n-\alpha p}{p-1}}\int_R^{\infty}\left(\frac{\int_{B}u^{q_2}\mathrm{d} \sigma}{s^{n-\alpha p}}\right)^{\frac{1}{p-1}}\frac{\mathrm{d} s}{s} \nonumber\\
     & = 2^{-\frac{n-\alpha p}{p-1}} \left(\int_{B}u^{q_2}\mathrm{d} \sigma\right)^{\frac{1}{p-1}} \int_R^{\infty}\left(\frac{1}{s^{n-\alpha p}}\right)^{\frac{1}{p-1}}\frac{\mathrm{d} s}{s} =: A(R) \label{estimete A(R)}
\end{align}
Let $c_1= A(R)^{q_1/(p-1)}$. By \eqref{estimativalemmainferior1}, it follows from \eqref{estimete A(R)} that 
\begin{equation}\label{first estimate}
    u(x)\geq c_1\mathbf{W}_{\alpha,p}\sigma_B (x).
\end{equation} 
With the aid of Claim~\ref{claim1}, where we consider $\sigma_B$ in place of $\sigma$ and $\delta=1$, we obtain from \eqref{first estimate} that
\begin{equation*}
     u(x)\geq c_2\left(\mathbf{W}_{\alpha,p}\sigma_B(x)\right)^{{\delta_2}},
\end{equation*}
where 
\begin{equation*}
c_2={c_1}^{\frac{q_1q_2}{(p-1)^2}}\kappa^{\frac{q_1(p-1 + 2q_2 )}{(p-1)^2}}, \quad \delta_2=\frac{q_1}{p-1}\left(\frac{q_2}{p-1}+1\right)+1.
\end{equation*}
In fact, setting $\delta_1=1$ and $c_1$ as above, iterating \eqref{estimativalemmainferior1} and \eqref{first estimate} with Claim~\ref{claim1}, we concluded that
\begin{equation}\label{estimateu_j}
    u(x)\geq c_j\left(\mathbf{W}_{\alpha,p}\sigma_B(x)\right)^{\delta_j}, 
\end{equation}
where $\delta_j$ and $c_j$, for $j=2,3,\ldots$, are given by
\begin{align}
    \delta_j&=\frac{q_1}{p-1}\left(\frac{q_2}{p-1}\delta_{j-1}+1\right)+1,\label{seqr_j} \\
    c_j&= {c_{j-1}}^{\frac{q_1q_2}{(p-1)^2}}\kappa^{\frac{q_1(p-1 + 2q_2 \delta_{j-1})}{(p-1)^2}}.\label{seqc_j}
\end{align}
If $\gamma_1=\lim_{j\to \infty} \delta_j$ and $C=\lim_{j\to \infty}c_j$, letting $j\to \infty$ in \eqref{seqr_j} and \eqref{seqc_j} is a straightforward computation to conclude that
\begin{equation}\label{gamma1}
    \gamma_1=\frac{(p-1)(p-1+q_1)}{(p-1)^2-q_1q_2} \quad \mbox{and}\quad C=\kappa^{\frac{q_1(p-1)[(p-1)^2+2(p-1)q_2+q_1q_2]}{[(p-1)^2-q_1q_2]^2}}.
\end{equation}
Hence letting $j\to\infty$ in \eqref{estimateu_j}, we obtain
\begin{equation}\label{lowerestilocal}
    u(x)\geq C\left(\mathbf{W}_{\alpha,p}\sigma_B(x)\right)^{\gamma_1},
\end{equation}
where we recall that $B=B(0,R)$ and  $R>|x|$.
Finally, letting $R\to \infty$ in \eqref{lowerestilocal} yields 
\begin{equation*}
    u(x)\geq C\left(\mathbf{W}_{\alpha,p}\sigma(x)\right)^{\frac{(p-1)(p-1+q_1)}{(p-1)^2-q_1q_2}}, \quad \forall x\in\mathds{R}^n.
\end{equation*}
The same arguments, replacing $q_1$ by $q_2$ in \eqref{double inequality wolff} and $u$ by $v$, yield the lower estimate for $v$, that is,
\begin{equation}\label{estimatev_j}
    v(x)\geq \tilde{c}_j\left(\mathbf{W}_{\alpha,p}\sigma_B(x)\right)^{\tilde{\delta}_j},
\end{equation}
where 
\begin{align}
  \tilde{\delta}_1&=1, \nonumber \\
  \tilde{c}_1&=2^{-\frac{n-\alpha p}{p-1}} \left(\int_{B}v^{q_1}\mathrm{d} \sigma\right)^{\frac{1}{p-1}} \int_R^{\infty}\left(\frac{1}{s^{n-\alpha p}}\right)^{\frac{1}{p-1}}\frac{\mathrm{d} s}{s},   \nonumber\\
  \tilde{\delta}_j&=\frac{q_2}{p-1}\left(\frac{q_1}{p-1}\tilde{\delta}_{j-1}+1\right)+1,\label{seqdelta_j} \\
    \tilde{c}_j&= {\tilde{c}_{j-1}}^{\frac{q_1q_2}{(p-1)^2}}\kappa^{\frac{q_2(p-1 + 2q_1 \tilde{\delta}_{j-1})}{(p-1)^2}}, \;  j=2,3,\ldots\label{seqd_j}
\end{align}
If $\gamma_2=\lim_{j\to \infty} \tilde{\delta}_j$ and $\Tilde{C}=\lim_{j\to \infty}\tilde{c}_j$, letting $j\to \infty$ in \eqref{seqdelta_j} and \eqref{seqd_j}, a straightforward computation yields 
\begin{equation}\label{gamma2}
    \gamma_2=\frac{(p-1)(p-1+q_2)}{(p-1)^2-q_1q_2} \quad \mbox{and}\quad \tilde{C}=\kappa^{\frac{q_2(p-1)[(p-1)^2+2(p-1)q_1+q_1q_2]}{[(p-1)^2-q_1q_2]^2}}.
\end{equation}
Thus, taking the limits $j\to\infty$ and $R\to\infty$ respectively in \eqref{estimatev_j},
\begin{equation*}
    v(x)\geq \tilde{C}\left(\mathbf{W}_{\alpha,p}\sigma(x)\right)^{\frac{(p-1)(p-1+q_2)}{(p-1)^2-q_1q_2}}, \quad\forall x\in\mathds{R}^n.
\end{equation*}
Setting  $c=\min\{C, \tilde{C}\}$, we complete the proof of Lemma~\ref{estimativainferior}.
\end{proof}

We will prove that conditions \eqref{general wolff finite} and \eqref{sigma abscont alfa p capacidade} are sufficient for the existence of a positive solution to the Syst.~\eqref{sistemawolff}. For that, let us state the following result,  \cite[Lemma~2.1]{MR3556326}.

\begin{lemmaletter}
    \label{regularidade wolff}
       Let $\sigma\in {M}^+(\mathds{R}^n)$ satisfying \eqref{sigma abscont alfa p capacidade}. 
       Then, for every $s>0$, 
       \begin{equation}\label{wolff L^sloc}
           \int_E(\mathbf{W}_{\alpha,p}\sigma_E)^s\,\mathrm{d}\sigma\leq c\, \sigma(E) \quad \mbox{for all compact sets } E\subset \mathds{R}^n,
       \end{equation}
       where $c$ is a positive constant which depends on $n,\;p,\;\alpha,\;s$ and $C_{\sigma}$. Moreover, if \eqref{wolff L^sloc} holds for a given $s>0$, then \eqref{sigma abscont alfa p capacidade} holds with $C=C(n,p,\alpha,s,c)$; hence \eqref{wolff L^sloc} holds for every $s>0$.
\end{lemmaletter}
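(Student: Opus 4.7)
The plan is to establish the two implications of the equivalence separately, with the forward direction carrying most of the weight. The principal tools I would use are Wolff's inequality $\int \mathbf{W}_{\alpha,p}\mu\,d\mu \approx \|\mathbf{I}_\alpha\mu\|_{p'}^{p'}$ (Havin--Maz'ya), the Maz'ya--Adams characterization of the $(\alpha,p)$-capacity via Sobolev trace inequalities, and a nonlinear good-$\lambda$/John--Nirenberg self-improvement.

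For the forward direction, I would first handle the base exponent $s=1$ by the following chain. By the Maz'ya--Adams theorem, condition \eqref{sigma abscont alfa p capacidade} is equivalent to the trace inequality $\int |\mathbf{I}_\alpha f|^p\,d\sigma \leq c\,C_\sigma\|f\|_p^p$; its dual form, tested with $g=\chi_E$ using the self-adjointness of $\mathbf{I}_\alpha$, yields $\|\mathbf{I}_\alpha\sigma_E\|_{p'}^{p'} \lesssim C_\sigma^{p'/p}\sigma(E)$, and Wolff's inequality then transfers this to $\int_E \mathbf{W}_{\alpha,p}\sigma_E\,d\sigma \lesssim \sigma(E)$. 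The case $s<1$ follows by H\"older's inequality applied to the $s=1$ bound. For $s>1$ I would run a good-$\lambda$/self-improvement argument aiming at a recursion of the form
\[
\sigma\bigl(\{\mathbf{W}_{\alpha,p}\sigma_E > 2\lambda\}\cap E\bigr) \leq \gamma\,\sigma\bigl(\{\mathbf{W}_{\alpha,p}\sigma_E > \lambda\}\cap E\bigr),
\]
with $\gamma<1$ independent of $\lambda$, obtained by isolating the tail of the Wolff potential on the super-level set $\{\mathbf{W}_{\alpha,p}\sigma_E > \lambda\}$ and re-applying the capacity assumption to control it. Integration in $\lambda$ would then produce exponential integrability of $\mathbf{W}_{\alpha,p}\sigma_E$ on $E$ with respect to $\sigma$, and hence the $L^s$ bound for every $s>0$ by Taylor expansion.

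For the reverse direction, I would first reduce an estimate \eqref{wolff L^sloc} at an arbitrary $s_0>0$ to the tractable exponent $s=1$. When $s_0 \geq 1$ this is a straightforward H\"older inequality, while for $s_0<1$ one can pull the exponent up by iterating the nonlinear inequality of Lemma~\ref{estimativekappa} against the Wolff scale integral to absorb a factor of $\mathbf{W}_{\alpha,p}\sigma_E$ at a time. Once the $s=1$ estimate is available, Wolff's inequality converts it to $\|\mathbf{I}_\alpha\sigma_E\|_{p'}^{p'} \leq c'\,\sigma(E)$; testing any $f\geq 0$ admissible in \eqref{capa} against $d\sigma_E$ via the self-adjointness of $\mathbf{I}_\alpha$ and taking the infimum over such $f$ yields the duality bound $\sigma(E) \leq \mathrm{cap}_{\alpha,p}(E)^{1/p}\|\mathbf{I}_\alpha\sigma_E\|_{p'}$. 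Using $p'/p=p'-1$ and inserting the previous estimate,
\[
\sigma(E)^{p'} \leq \mathrm{cap}_{\alpha,p}(E)^{p'-1}\,\|\mathbf{I}_\alpha\sigma_E\|_{p'}^{p'} \leq c'\,\mathrm{cap}_{\alpha,p}(E)^{p'-1}\sigma(E),
\]
so $\sigma(E) \leq (c')^{p-1}\mathrm{cap}_{\alpha,p}(E)$, which is \eqref{sigma abscont alfa p capacidade}.

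The hard part will be the promotion from $s=1$ to all $s>0$ in the forward direction. The capacity assumption delivers only $\sigma(B(x,t))/t^{n-\alpha p} \lesssim C_\sigma$, so the logarithmic divergence of the Wolff scale integral $\int_0^\infty(\cdots)^{1/(p-1)}\,dt/t$ blocks any pointwise bound on $\mathbf{W}_{\alpha,p}\sigma_E$ over $E$; the missing estimate is intrinsically nonlinear, and closing the loop requires the good-$\lambda$/exponential argument sketched above, where the full strength of \eqref{sigma abscont alfa p capacidade} (uniformly over \emph{all} compact sets, rather than in an integrated form) is genuinely used. Everything else reduces to standard manipulations with Wolff and Riesz potentials and to the variational duality characterizing $\mathrm{cap}_{\alpha,p}$.
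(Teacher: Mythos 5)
This lemma is not proved in the paper; it is cited verbatim as \cite[Lemma~2.1]{MR3556326}, so there is no ``paper's own proof'' to compare against. Assessing your sketch on its merits: the $s=1$ step of the forward direction (Maz'ya--Adams $\to$ trace inequality $\to$ dualize $\to$ Wolff's inequality), the $s<1$ step (H\"older down), and the $s_0\geq 1$ reduction of the converse are all sound, but two links are defective.

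The first is the $s>1$ self-improvement. The good-$\lambda$ inequality you write,
\[
\sigma\bigl(\{\mathbf{W}_{\alpha,p}\sigma_E > 2\lambda\}\cap E\bigr) \le \gamma\,\sigma\bigl(\{\mathbf{W}_{\alpha,p}\sigma_E > \lambda\}\cap E\bigr), \qquad \gamma<1,
\]
is a \emph{multiplicative} recursion under dyadic doubling, and iterating it yields only $\sigma(\{\mathbf{W}_{\alpha,p}\sigma_E>\lambda\}\cap E)\lesssim \sigma(E)\,\lambda^{\log_2\gamma}$ --- polynomial decay, giving $L^s(\mathrm{d}\sigma)$ integrability only for $s< -\log_2\gamma$, and not exponential integrability. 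To obtain all $s>0$ (indeed $\exp L$ integrability) from a level-set scheme one needs an \emph{additive} John--Nirenberg inequality of the form $\sigma(\{\mathbf{W}_{\alpha,p}\sigma_E>\lambda+A\}\cap E)\le \gamma\,\sigma(\{\mathbf{W}_{\alpha,p}\sigma_E>\lambda\}\cap E)$ with $A$ a fixed constant depending on $C_\sigma,n,p,\alpha$; only that iterates to $\sigma(\{\mathbf{W}_{\alpha,p}\sigma_E>\lambda\}\cap E)\lesssim \sigma(E)\,e^{-c\lambda}$. As written, the recursion you aim for is the wrong one, and ``exponential integrability'' does not follow from it.

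The second is the converse for $s_0<1$, where you propose to ``pull the exponent up by iterating Lemma~\ref{estimativekappa}.'' That lemma is a pointwise \emph{lower} bound on a composed Wolff potential; it does not, in any way I can see, convert a bound on a low moment of $\mathbf{W}_{\alpha,p}\sigma_E$ into a bound on a higher one. In fact the reduction is unnecessary: the converse admits a uniform proof for every $s_0>0$ using exactly the tool the paper already invokes in Lemma~\ref{Lemma sigma abs cont}, namely \cite[Theorem~1.11]{MR1747901}, which gives $\int_E (\mathbf{W}_{\alpha,p}\mu)^{-(p-1)}\,\mathrm{d}\mu \le C_0\,\mathrm{cap}_{\alpha,p}(E)$ for any $\mu\in M^+(\mathds{R}^n)$. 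Take $\mu=\sigma_E$, write $\sigma(E)=\int_E (\mathbf{W}_{\alpha,p}\sigma_E)^{-\beta}(\mathbf{W}_{\alpha,p}\sigma_E)^{\beta}\,\mathrm{d}\sigma$ with $\beta=\frac{(p-1)s_0}{s_0+p-1}$, and apply H\"older with exponents $r=\frac{s_0+p-1}{s_0}$, $r'=\frac{s_0+p-1}{p-1}$, so that $\beta r=p-1$ and $\beta r'=s_0$. The first factor is bounded by $(C_0\,\mathrm{cap}_{\alpha,p}(E))^{1/r}$ via Theorem~1.11 and the second by $(c\,\sigma(E))^{1/r'}$ via \eqref{wolff L^sloc}; since $1-1/r'=1/r$, dividing by $\sigma(E)^{1/r'}$ and raising to the $r$-th power gives $\sigma(E)\le C_0\,c^{(p-1)/s_0}\,\mathrm{cap}_{\alpha,p}(E)$. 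I would replace your converse argument with this; it treats all $s_0>0$ at once and makes the constant dependence explicit.
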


    Lemma \ref{regularidade wolff} implies, in particular, that for all balls $B$,
    \begin{equation}\label{wolff L^sloc usual}
         \int_B(\mathbf{W}_{\alpha,p}\sigma_B)^s\,\mathrm{d}\sigma\leq c\, \sigma(B).
    \end{equation}
The following Lemma is a consequence of \eqref{wolff L^sloc usual}.
\begin{lemma}\label{regularidade solution wolff}
   Let $\sigma\in {M}^+(\mathds{R}^n)$ satisfying  \eqref{general wolff finite} and \eqref{sigma abscont alfa p capacidade}. Then 
   \begin{equation*}
       \mathbf{W}_{\alpha,p}\sigma\in L_{\mathrm{loc}}^s(\mathds{R}^n,\mathrm{d} \sigma), \quad \forall s>0.
   \end{equation*}
\end{lemma}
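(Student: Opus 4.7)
The plan is to reduce the $L^s_{\mathrm{loc}}(\mathds{R}^n,\mathrm{d}\sigma)$ membership of $\mathbf{W}_{\alpha,p}\sigma$ to the localized estimate \eqref{wolff L^sloc usual} of Lemma~\ref{regularidade wolff} via a standard short-range/long-range split. Fix a ball $B(0,R)$ (it is enough to bound the $s$-integral there, since any compact set sits inside some such ball) and, for $x\in B(0,R)$, write
\[
\mathbf{W}_{\alpha,p}\sigma(x)=\int_0^{2R}\left(\frac{\sigma(B(x,t))}{t^{n-\alpha p}}\right)^{\frac{1}{p-1}}\frac{\mathrm{d}t}{t}+\int_{2R}^{\infty}\left(\frac{\sigma(B(x,t))}{t^{n-\alpha p}}\right)^{\frac{1}{p-1}}\frac{\mathrm{d}t}{t}.
\]
For $t\geq 2R$ the inclusion $B(x,t)\subset B(0,2t)$ holds uniformly in $x\in B(0,R)$, so after the change of variable $s=2t$ the tail is dominated by a constant $M_R<\infty$ depending only on $R,n,p,\alpha$ and $\sigma$, whose finiteness is exactly the content of hypothesis \eqref{general wolff finite} (together with $\sigma(B(0,1))<\infty$, which follows from the capacity condition since $\mathrm{cap}_{\alpha,p}(B(0,1))<\infty$). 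For $t<2R$, the inclusion $B(x,t)\subset B(0,3R)$ gives $\sigma(B(x,t))=\sigma_{B(0,3R)}(B(x,t))$ where $\sigma_{B(0,3R)}:=\chi_{B(0,3R)}\sigma$, so the short-range piece is controlled pointwise by $\mathbf{W}_{\alpha,p}\sigma_{B(0,3R)}(x)$.

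Combining the two bounds yields the pointwise estimate
\[
\mathbf{W}_{\alpha,p}\sigma(x)\leq \mathbf{W}_{\alpha,p}\sigma_{B(0,3R)}(x)+M_R,\qquad x\in B(0,R).
\]
I then raise to the power $s$ via $(a+b)^s\leq C_s(a^s+b^s)$ and integrate against $\mathrm{d}\sigma$ over $B(0,R)$. The capacity condition \eqref{sigma abscont alfa p capacidade} forces $\sigma(B(0,R)),\,\sigma(B(0,3R))<\infty$, so the $M_R^s$ term contributes a finite quantity. The remaining term is exactly the setting of Lemma~\ref{regularidade wolff}: applying \eqref{wolff L^sloc usual} with $E=\overline{B(0,3R)}$,
\[
\int_{B(0,R)}\left(\mathbf{W}_{\alpha,p}\sigma_{B(0,3R)}\right)^{s}\mathrm{d}\sigma\leq \int_{B(0,3R)}\left(\mathbf{W}_{\alpha,p}\sigma_{B(0,3R)}\right)^{s}\mathrm{d}\sigma\leq c\,\sigma(B(0,3R))<\infty,
\]
which gives $\mathbf{W}_{\alpha,p}\sigma\in L^s(B(0,R),\mathrm{d}\sigma)$ for every $R>0$ and every $s>0$, and hence the asserted local integrability.

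The only delicate step is verifying that the tail piece admits a bound \emph{independent} of $x\in B(0,R)$; this is a purely geometric observation ($B(x,t)\subset B(0,2t)$ whenever $t\geq 2R$ and $|x|\leq R$), so no real obstacle arises. The remaining ingredients — the capacity-based finiteness of $\sigma$ on balls, and the localized estimate \eqref{wolff L^sloc usual} — are already at hand from the hypotheses and from Lemma~\ref{regularidade wolff}.
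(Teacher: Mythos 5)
Your proof is correct and follows essentially the same strategy as the paper: a near/far decomposition in which the local piece is controlled by \eqref{wolff L^sloc usual} from Lemma~\ref{regularidade wolff} and the far piece is bounded uniformly on the ball via \eqref{general wolff finite}. The only cosmetic difference is that you split the $t$-integral at $t=2R$, whereas the paper splits the measure as $\sigma_{2B}+\sigma_{(2B)^c}$; both routes produce the same two estimates.
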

\begin{proof}
We need to show that $\mathbf{W}_{\alpha,p}\sigma\in L^s(B, \mathrm{d}\sigma)$ for every ball $B=B(x_0,R)$  in $\mathds{R}^n$ and $s>0$. We set $2B=B(x_0,2R)$ and denote by $(2B)^c$ the complement of $2B$ in $\mathds{R}^n$. 
Using the elementary inequality: given $r>0$, it holds 
\begin{equation}\label{inequality elementary}
    |a_1+a_2|^{r}\leq 2^r\,(|a_1|^r+|a_ 2|^r), \quad \forall a_1, a_2\in\mathds{R},
\end{equation}
and writing $\sigma=\sigma_{2B}+\sigma_{(2B)^c} 
$, we see from \eqref{inequality elementary} that
\begin{align*}
\int_B(\mathbf{W}_{\alpha,p}\sigma)^s\,\mathrm{d}\sigma &\leq c_0\int_B\left(\mathbf{W}_{\alpha,p}\sigma_{2B}\right)^s\,\mathrm{d}\sigma+c_0\int_B\left(\mathbf{W}_{\alpha,p}\sigma_{(2B)^{c}}\right)^s\,\mathrm{d}\sigma\\
    & =: c_0\left(I_1 + I_2\right),
\end{align*}
where $c_0=c_0(p,s)>0$. By hypothesis on $\sigma$, we can using \eqref{wolff L^sloc usual} to estimate $I_1$
\begin{equation*}
        I_1 \leq \int_{2B}\left(\mathbf{W}_{\alpha,p}\sigma_{2B}\right)^s\,\mathrm{d}\sigma \leq c_1 \,\sigma(2B)<\infty,
\end{equation*}
where $c_1=c_1(s,p,c)$, with $c$ given in \eqref{wolff L^sloc usual}. To estimate $I_2$, we first notice that $(2B)^c\cap B(y,t)=\emptyset$ for $y\in B$ and $0<t< R$. Hence, for $y\in B$, we have
\begin{align*}
    \mathbf{W}_{\alpha,p}\sigma_{(2B)^c}(y)&=\int_0^{\infty}\left(\frac{\sigma((2B)^c\cap B(y,t))}{t^{n-\alpha p}}\right)^{\frac{1}{p-1}}\frac{\mathrm{d} t}{t}\\
    &= \int_R^{\infty}\left(\frac{\sigma((2B)^c\cap B(y,t))}{t^{n-\alpha p}}\right)^{\frac{1}{p-1}}\frac{\mathrm{d} t}{t}.
\end{align*}
If $t\geq R$ and $y\in B$, $B(y,t)\subset B(0,2t)$, and consequently
\begin{align*}
    \mathbf{W}_{\alpha,p}\sigma_{(2B)^c}(y)&\leq \int_R^{\infty}\left(\frac{\sigma((2B)^c\cap B(0,2t))}{t^{n-\alpha p}}\right)^{\frac{1}{p-1}}\frac{\mathrm{d} t}{t}\\
    & \leq \int_R^{\infty}\left(\frac{\sigma(B(0,2t))}{t^{n-\alpha p}}\right)^{\frac{1}{p-1}}\frac{\mathrm{d} t}{t} \leq 2^{\frac{n-\alpha p}{p-1}}\int_R^{\infty}\left(\frac{\sigma(B(0,t))}{t^{n-\alpha p}}\right)^{\frac{1}{p-1}}\frac{\mathrm{d} t}{t}. 
\end{align*}
From the previous estimate, we obtain,
\begin{align*}
    \sup_{y\in B}\left(W_{\alpha,p}\sigma_{(2B)^c}(y)\right)^s&\leq 2^{\frac{s(n-\alpha p)}{p-1}}\left(\int_R^{\infty}\left(\frac{\sigma(B(0,t))}{t^{n-\alpha p}}\right)^{\frac{1}{p-1}}\frac{\mathrm{d} t}{t}\right)^s\\
     &=: A(R).
\end{align*}
Thus 
\begin{align*}
   I_2= \int_B \left(W_{\alpha,p}\sigma_{(2B)^c}\right)^s\,\mathrm{d}\sigma \leq A(R)\sigma(B)< \infty.
\end{align*}
This proves that both $I_1$ and $I_2$ are finite, and consequently $\mathbf{W}_{\alpha,p}\sigma\in L^s(B,\mathrm{d}\sigma)$ for every ball $B$ in $\mathds{R}^n$.
\end{proof}

\subsection{Proof of Theorem~\ref{existencewollfsystem}} 
We will proceed by the method of sub- and super-solutions. 
We begin recalling $\gamma_1, \; \gamma_2$  given in \eqref{gamma1} and \eqref{gamma2}, that is
\begin{equation*}
     \gamma_1=\frac{(p-1)(p-1+q_1)}{(p-1)^2-q_1q_2} \quad \mbox{and} \quad
       \gamma_2=\frac{(p-1)(p-1+q_2)}{(p-1)^2-q_1q_2},
\end{equation*}
which can be rewritten as follows 
\begin{equation*}
    \gamma_1=\frac{q_1}{p-1}{\gamma_2}+1 \quad \mbox{and}\quad {\gamma_2}=\frac{q_2}{p-1}\gamma_1+1.
\end{equation*}
\begin{Claim}\label{claim2} There exists $ \lambda_1>0 $ sufficiently small such that
    \begin{equation*}
    (\underline{u},\underline{v})= \big(\lambda_1 (\mathbf{W}_{\alpha,p}\sigma)^{\gamma_1}, \lambda_1 (\mathbf{W}_{\alpha,p}\sigma)^{{\gamma_2}}\big)
\end{equation*}
is a subsolution to \eqref{sistemawolff}.
\end{Claim}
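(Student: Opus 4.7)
The plan is to verify directly that $(\underline{u},\underline{v})$ satisfies the subsolution inequalities
$$\underline{u}\leq \mathbf{W}_{\alpha,p}(\underline{v}^{q_1}\mathrm{d}\sigma),\qquad \underline{v}\leq \mathbf{W}_{\alpha,p}(\underline{u}^{q_2}\mathrm{d}\sigma),$$
using only the homogeneity of the Wolff potential and the lower pointwise estimate provided by Lemma~\ref{estimativekappa}. The key arithmetic input is the pair of identities
$$\gamma_1=\tfrac{q_1}{p-1}\gamma_2+1,\qquad \gamma_2=\tfrac{q_2}{p-1}\gamma_1+1,$$
which is exactly what makes the self-similar ansatz $(\lambda_1(\mathbf{W}_{\alpha,p}\sigma)^{\gamma_1},\lambda_1(\mathbf{W}_{\alpha,p}\sigma)^{\gamma_2})$ compatible with the system.

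I would first plug in the ansatz into the right-hand side of the first subsolution inequality. Using the scaling $\mathbf{W}_{\alpha,p}(c\,f\,\mathrm{d}\sigma)=c^{1/(p-1)}\mathbf{W}_{\alpha,p}(f\,\mathrm{d}\sigma)$ for constants $c\geq 0$, we get
$$\mathbf{W}_{\alpha,p}\bigl(\underline{v}^{q_1}\mathrm{d}\sigma\bigr)=\lambda_1^{q_1/(p-1)}\,\mathbf{W}_{\alpha,p}\!\left((\mathbf{W}_{\alpha,p}\sigma)^{q_1\gamma_2}\mathrm{d}\sigma\right).$$
Now apply Lemma~\ref{estimativekappa} with $\omega=\sigma$ and $r=q_1\gamma_2$ to obtain
$$\mathbf{W}_{\alpha,p}\!\left((\mathbf{W}_{\alpha,p}\sigma)^{q_1\gamma_2}\mathrm{d}\sigma\right)(x)\geq \kappa^{\frac{q_1\gamma_2}{p-1}}\bigl(\mathbf{W}_{\alpha,p}\sigma(x)\bigr)^{\frac{q_1\gamma_2}{p-1}+1}=\kappa^{\frac{q_1\gamma_2}{p-1}}\bigl(\mathbf{W}_{\alpha,p}\sigma(x)\bigr)^{\gamma_1},$$
where the last equality uses the first identity above. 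Combining, the first subsolution inequality reduces to the numerical condition
$$\lambda_1\leq \lambda_1^{q_1/(p-1)}\,\kappa^{q_1\gamma_2/(p-1)},\quad\text{i.e.}\quad \lambda_1^{(p-1-q_1)/(p-1)}\leq \kappa^{q_1\gamma_2/(p-1)}.$$
Since $0<q_1<p-1$, the exponent on the left is strictly positive, so the inequality holds for all $\lambda_1>0$ sufficiently small.

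An entirely analogous computation handles the second inequality, with $q_1$, $\gamma_2$, and $u$ replaced by $q_2$, $\gamma_1$, and $v$, yielding a second smallness condition $\lambda_1^{(p-1-q_2)/(p-1)}\leq \kappa^{q_2\gamma_1/(p-1)}$. Choosing $\lambda_1>0$ small enough to satisfy both conditions simultaneously completes the proof. There is no real obstacle here; the only thing to be careful about is matching the exponents correctly through the relations defining $\gamma_1,\gamma_2$, and noting that all quantities are finite $\mathrm{d}\sigma$-a.e.\ because of assumption \eqref{general wolff finite}, so the inequalities make pointwise sense $\mathrm{d}\sigma$-a.e.\ in $\mathds{R}^n$.
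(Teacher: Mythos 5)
Your proposal is correct and follows essentially the same route as the paper: homogeneity of the Wolff potential, Lemma~\ref{estimativekappa} applied with $r=q_1\gamma_2$ and $r=q_2\gamma_1$, the identities $\gamma_1=\tfrac{q_1}{p-1}\gamma_2+1$, $\gamma_2=\tfrac{q_2}{p-1}\gamma_1+1$, and then a smallness condition on $\lambda_1$ using $0<q_i<p-1$. Your explicit conditions $\lambda_1^{(p-1-q_1)/(p-1)}\leq \kappa^{q_1\gamma_2/(p-1)}$ and $\lambda_1^{(p-1-q_2)/(p-1)}\leq \kappa^{q_2\gamma_1/(p-1)}$ are exactly what is needed (the paper's stated choice $\lambda_1=\min\{\kappa^{q_1\gamma_2/(p-1-q_1)},\kappa^{q_2\gamma_2/(p-1-q_2)}\}$ appears to carry a typo, $\gamma_2$ in place of $\gamma_1$ in the second exponent, which your derivation implicitly corrects).
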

\noindent Indeed, using Lemma \ref{estimativekappa},
\begin{align*}
     \mathbf{W}_{\alpha,p}(\underline{v}^{q_1}\mathrm{d}\sigma)&= {\lambda_1}^{\frac{q_1}{p-1}}\mathbf{W}_{\alpha,p}( (\mathbf{W}_{\alpha,p}\sigma)^{q_1{\gamma_2}}\mathrm{d}\sigma)\geq {\lambda_1}^{\frac{q_1}{p-1}}\kappa^{\frac{q_1}{p-1}{\gamma_2}}(\mathbf{W}_{\alpha,p}\sigma)^{\frac{q_1}{p-1}{\gamma_2}+1} \\
    & = {\lambda_1}^{\frac{q_1}{p-1}}\kappa^{\frac{q_1}{p-1}{\gamma_2}}(\mathbf{W}_{\alpha,p}\sigma)^{\gamma_1},\\    
    \mathbf{W}_{\alpha,p}(\underline{u}^{q_2}\mathrm{d}\sigma)&= {\lambda_1}^{\frac{q_2}{p-1}}\mathbf{W}_{\alpha,p}( (\mathbf{W}_{\alpha,p}\sigma)^{q_2{\gamma_1}}\mathrm{d}\sigma)\geq {\lambda_1}^{\frac{q_2}{p-1}}\kappa^{\frac{q_2}{p-1}{\gamma_1}}(\mathbf{W}_{\alpha,p}\sigma)^{\frac{q_2}{p-1}{\gamma_1}+1} \\
    & = {\lambda_1}^{\frac{q_2}{p-1}}\kappa^{\frac{q_2}{p-1}{\gamma_1}}(\mathbf{W}_{\alpha,p}\sigma)^{\gamma_2}.
\end{align*}
Now, choosing ${\lambda_1}=\min\{\kappa^{(q_1\gamma_2)/(p-1-q_1)},\kappa^{(q_2\gamma_2)/(p-1-q_2)}\}$, we deduce
\begin{equation*}
\begin{aligned}
    & \underline{u}\leq \mathbf{W}_{\alpha,p}(\underline{v}^{q_1}\mathrm{d}\sigma),\\
    & \underline{v}\leq \mathbf{W}_{\alpha,p}(\underline{u}^{q_2}\mathrm{d}\sigma),
\end{aligned}
\end{equation*}
which completes the proof of Claim~\ref{claim2}.

\begin{Claim}\label{claim3}
    There exists $\lambda_2>0$ sufficiently large such that
\begin{equation*}
(\overline{u},\overline{v})=\big(\lambda_2\left(\mathbf{W}_{\alpha,p}\sigma + \left(\mathbf{W}_{\alpha,p}\sigma\right)^{\gamma_1} \right), \lambda_2\left(\mathbf{W}_{\alpha,p}\sigma + \left(\mathbf{W}_{\alpha,p}\sigma\right)^{{\gamma_2}} \right)\big)
\end{equation*}
is a supersolution to \eqref{sistemawolff}, such that $\overline{u}\geq \underline{u}$ and $\overline{v}\geq \underline{v}$.
\end{Claim}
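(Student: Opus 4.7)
\textbf{Proof plan for Claim~\ref{claim3}.} The ordering $\overline{u}\geq \underline{u}$ and $\overline{v}\geq \underline{v}$ will follow at once from choosing $\lambda_2\geq \lambda_1$, since $\mathbf{W}_{\alpha,p}\sigma+(\mathbf{W}_{\alpha,p}\sigma)^{\gamma_i}\geq (\mathbf{W}_{\alpha,p}\sigma)^{\gamma_i}$ pointwise. The real content of the claim is therefore the two supersolution inequalities $\mathbf{W}_{\alpha,p}(\overline{v}^{q_1}\,\mathrm{d}\sigma)\leq \overline{u}$ and $\mathbf{W}_{\alpha,p}(\overline{u}^{q_2}\,\mathrm{d}\sigma)\leq \overline{v}$, whose proofs are symmetric under the exchange $(u,q_1,\gamma_1)\leftrightarrow (v,q_2,\gamma_2)$. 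I will carry out the first and indicate that the second is obtained verbatim.

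The plan is to expand $\overline{v}^{q_1}$ by the elementary inequality \eqref{inequality elementary} and then distribute $\mathbf{W}_{\alpha,p}$ over the resulting sum by the analogous subadditivity in its measure argument (from $(a+b)^{1/(p-1)}\leq c_p(a^{1/(p-1)}+b^{1/(p-1)})$), yielding
\begin{equation*}
\mathbf{W}_{\alpha,p}(\overline{v}^{q_1}\,\mathrm{d}\sigma)\leq c\,\lambda_2^{q_1/(p-1)}\Big[\mathbf{W}_{\alpha,p}\big((\mathbf{W}_{\alpha,p}\sigma)^{q_1}\,\mathrm{d}\sigma\big)+\mathbf{W}_{\alpha,p}\big((\mathbf{W}_{\alpha,p}\sigma)^{q_1\gamma_2}\,\mathrm{d}\sigma\big)\Big].
\end{equation*}
What remains is to control each iterated Wolff potential by $C\,(\mathbf{W}_{\alpha,p}\sigma+(\mathbf{W}_{\alpha,p}\sigma)^{\gamma_1})$. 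I expect the main technical obstacle to be the pointwise bound
\begin{equation*}
\mathbf{W}_{\alpha,p}\big((\mathbf{W}_{\alpha,p}\sigma)^{s}\,\mathrm{d}\sigma\big)(x)\leq C_s\big(\mathbf{W}_{\alpha,p}\sigma(x)+(\mathbf{W}_{\alpha,p}\sigma(x))^{\frac{s}{p-1}+1}\big),\qquad x\in\mathds{R}^n,
\end{equation*}
valid for every $s>0$ under the capacity condition \eqref{sigma abscont alfa p capacidade}. I would establish it by localization: at each scale $t$ split $\sigma=\sigma_{B(x,t)}+\sigma_{B(x,t)^{c}}$, note that $\mathbf{W}_{\alpha,p}\sigma_{B(x,t)^{c}}$ is essentially constant on $B(x,t)$ and controlled by $\mathbf{W}_{\alpha,p}\sigma(x)$, and then use \eqref{wolff L^sloc usual} from Lemma~\ref{regularidade wolff} to bound $\int_{B(x,t)}(\mathbf{W}_{\alpha,p}\sigma_{B(x,t)})^{s}\,\mathrm{d}\sigma$ by $c\,\sigma(B(x,t))$ on each scale, recovering an unweighted Wolff factor.

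Once this estimate is available, the identities $\gamma_1=q_1\gamma_2/(p-1)+1$ and $\gamma_2=q_2\gamma_1/(p-1)+1$ (recorded just before Claim~\ref{claim2}) match the exponents perfectly: the $s=q_1\gamma_2$ term is exactly $C(\mathbf{W}_{\alpha,p}\sigma+(\mathbf{W}_{\alpha,p}\sigma)^{\gamma_1})$, while for the $s=q_1$ term I use $q_1/(p-1)+1\leq \gamma_1$ (which holds because $\gamma_2\geq 1$) and split into the cases $\mathbf{W}_{\alpha,p}\sigma\leq 1$ and $\mathbf{W}_{\alpha,p}\sigma\geq 1$ to absorb $(\mathbf{W}_{\alpha,p}\sigma)^{q_1/(p-1)+1}$ by $\mathbf{W}_{\alpha,p}\sigma+(\mathbf{W}_{\alpha,p}\sigma)^{\gamma_1}$. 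Combining everything yields $\mathbf{W}_{\alpha,p}(\overline{v}^{q_1}\,\mathrm{d}\sigma)\leq C\,\lambda_2^{q_1/(p-1)-1}\,\overline{u}$; since $q_1<p-1$ the exponent is strictly negative, so enlarging $\lambda_2$ makes the coefficient $\leq 1$ and gives the first supersolution inequality. The same argument with indices swapped yields the $\overline{v}$-inequality (possibly enlarging $\lambda_2$ again), and taking $\lambda_2\geq \lambda_1$ secures the ordering with the subsolution of Claim~\ref{claim2}, completing the proof.
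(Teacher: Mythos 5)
Your proposal is correct and is essentially the same localization argument the paper uses in its proof of this claim: split the inner Wolff integral at scale $t$, control the local piece via \eqref{wolff L^sloc usual} by $c\,\sigma(B(x,2t))$, control the tail by $(\mathbf{W}_{\alpha,p}\sigma(x))^{s}\sigma(B(x,t))$, feed the result into the outer Wolff integral, and match exponents via $\gamma_1=\tfrac{q_1}{p-1}\gamma_2+1$ before enlarging $\lambda_2$. The only organizational difference is that you prove a single uniform bound $\mathbf{W}_{\alpha,p}\big((\mathbf{W}_{\alpha,p}\sigma)^{s}\,\mathrm{d}\sigma\big)\leq C_s\big(\mathbf{W}_{\alpha,p}\sigma+(\mathbf{W}_{\alpha,p}\sigma)^{s/(p-1)+1}\big)$ and absorb the $s=q_1$ case using $q_1/(p-1)+1\leq\gamma_1$ (from $\gamma_2\geq 1$), whereas the paper runs two parallel computations for $s=q_1$ and $s=\gamma_2 q_1$; in fact the paper's \eqref{inequalityq1} as printed omits the $(\mathbf{W}_{\alpha,p}\sigma(x))^{q_1}$ weight on the $\sigma(B(x,t))$ tail term, and your formulation, which keeps that weight and absorbs the resulting $(\mathbf{W}_{\alpha,p}\sigma)^{q_1/(p-1)+1}$ afterwards, is the cleaner and fully justified version.
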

 \noindent Indeed, we begin by construction upper bounds to  $\mathbf{W}_{\alpha,p}(\overline{v}^{q_1}\mathrm{d}\sigma)$ and $\mathbf{W}_{\alpha,p}(\overline{u}^{q_2}\mathrm{d}\sigma)$. For the first one, we need to obtain two estimates, which will be done in two steps. First, arguing as in the proof of Lemma~\ref{regularidade solution wolff}, one can check that there exists $c_1=c_1(p,q_1,\sigma)>0$ such that, for every $x\in\mathds{R}^n$,
 \begin{align}\label{inequalityq1}
     \int_0^{\infty}\Big(\frac{\int_{B(x,t)}(\mathbf{W}_{\alpha,p}\sigma)^{q_1}\,\mathrm{d}\sigma}{t^{n-\alpha p}}\Big)^{\frac{1}{p-1}}\frac{\mathrm{d} t}{t}&\leq \int_0^{\infty}\Big(\frac{c_1\sigma(B(x,2t))+c_1\sigma(B(x,t))}{t^{n-\alpha p}}\Big)^{\frac{1}{p-1}}\frac{\mathrm{d} t}{t} \nonumber\\
     & \leq c_2\int_0^{\infty}\Big(\frac{\sigma(B(x,2t))}{t^{n-\alpha p}}\Big)^{\frac{1}{p-1}}\frac{\mathrm{d} t}{t} +c_2 \, \mathbf{W}_{\alpha,p}\sigma(x) \nonumber\\
     & \leq c_2\, 2^{\frac{n-\alpha p}{p-1}}\int_0^{\infty}\Big(\frac{\sigma(B(x,t))}{t^{n-\alpha p}}\Big)^{\frac{1}{p-1}}\frac{\mathrm{d} t}{t} +c_2 \, \mathbf{W}_{\alpha,p}\sigma(x)\nonumber\\
 & \leq c_3 \mathbf{W}_{\alpha,p}\sigma(x),
 \end{align}
where $c_3=c_3(n,p,q_1,\sigma)>0$.  The second step consists of the following estimate
\begin{align*}
    \int_{B(x,t)}(\mathbf{W}_{\alpha,p}\sigma)^{{\gamma_2} q_1}\,\mathrm{d}\sigma&= \int_{B(x,t)}\Big[\int_0^\infty\Big(\frac{\sigma(B(y,r))}{r^{n-\alpha p}}\Big)^{\frac{1}{p-1}}\,\frac{\mathrm{d} r}{r}\Big]^{{\gamma_2} q_1}\,\mathrm{d}\sigma(y)\\
    & \leq c_4 \int_{B(x,t)}\Big[\int_0^t\Big(\frac{\sigma(B(y,r))}{r^{n-\alpha p}}\Big)^{\frac{1}{p-1}}\,\frac{\mathrm{d} r}{r}\Big]^{{\gamma_2} q_1}\,\mathrm{d}\sigma(y)\\
    &\quad + c_4\int_{B(x,t)}\Big[\int_t^\infty\Big(\frac{\sigma(B(y,r))}{r^{n-\alpha p}}\Big)^{\frac{1}{p-1}}\,\frac{\mathrm{d} r}{r}\Big]^{{\gamma_2} q_1}\,\mathrm{d}\sigma(y) =: c_4\left(I_1+I_2\right),
\end{align*}
where $c_4=c_4(p,q_1,q_2)>0$. For $y\in B(x,t)$ and $r\leq t$, we have $B(y,r)\subset B(x,2t)$, whence
\begin{align*}
    I_1&= \int_{B(x,t)}\Big[\int_0^t\Big(\frac{\sigma(B(y,r))}{r^{n-\alpha p}}\Big)^{\frac{1}{p-1}}\,\frac{\mathrm{d} r}{r}\Big]^{{\gamma_2} q_1}\,\mathrm{d}\sigma(y)\\
    &\leq\int_{B(x,2t)}\Big[\int_0^t\Big(\frac{\sigma(B(y,r)\cap B(x,2t))}{r^{n-\alpha p}}\Big)^{\frac{1}{p-1}}\,\frac{\mathrm{d} r}{r}\Big]^{{\gamma_2} q_1}\,\mathrm{d}\sigma(y) \\
    & \leq\int_{B(x,2t)}\Big[\mathbf{W}_{\alpha,p}\sigma_{B(x,2t)}\Big]^{{\gamma_2} q_1}\,\mathrm{d}\sigma(y).
\end{align*}
By \eqref{wolff L^sloc usual}, we deduce
\begin{equation*}
    I_1\leq \int_{B(x,2t)}\left[\mathbf{W}_{\alpha,p}\sigma_{B(x,2t)}\right]^{{\gamma_2} q_1}\,\mathrm{d}\sigma(y)\leq c_5\,\sigma(B(x,2t)),
\end{equation*}
 where $c_5=c_5(n,p,q_1,q_2,C_{\sigma})$. Now, for $r\geq t$, we have $B(y,r)\subset B(x,2r)$, and consequently
\begin{align*}
    I_2&\leq \int_{B(x,t)}\Big[\int_t^\infty\Big(\frac{\sigma(B(x,2r))}{r^{n-\alpha p}}\Big)^{\frac{1}{p-1}}\,\frac{\mathrm{d} r}{r}\Big]^{{\gamma_2} q_1}\,\mathrm{d}\sigma(y) \\
    & = \sigma(B(x,t))\Big[\int_t^\infty\Big(\frac{\sigma(B(x,2r))}{r^{n-\alpha p}}\Big)^{\frac{1}{p-1}}\,\frac{\mathrm{d} r}{r}\Big]^{{\gamma_2} q_1}\\
    &\leq \sigma(B(x,t))\Big[\int_0^\infty\Big(\frac{\sigma(B(x,2r))}{r^{n-\alpha p}}\Big)^{\frac{1}{p-1}}\,\frac{\mathrm{d} r}{r}\Big]^{{\gamma_2} q_1}\leq c_6\,\sigma(B(x,t))\left[\mathbf{W}_{\alpha,p}\sigma (x)\right]^{{\gamma_2} q_1},
\end{align*}
where $c_6=c_6(n,p,\alpha,q_1,q_2)=2^{((n-\alpha p)\gamma_2q_1)/(p-1)}$. From this, we obtain
\begin{equation*}
   \int_{B(x,t)}(\mathbf{W}_{\alpha,p}\sigma)^{{\gamma_2} q_1}\,\mathrm{d}\sigma\leq c_7\left[\sigma(B(x,2t))+\left(\mathbf{W}_{\alpha,p}\sigma (x)\right)^{{\gamma_2} q_1}\sigma(B(x,t))\right], 
\end{equation*}
where $c_7=c_7(n,p,q_1,q_2,\alpha, C_{\sigma})$. Thus, a combination of \eqref{inequality elementary}, \eqref{wolff L^sloc usual}, \eqref{inequalityq1} with the previous inequality yield
\begin{align}
    \mathbf{W}_{\alpha,p}(\overline{v}^{q_1}\mathrm{d}\sigma)(x)&\leq {\lambda_2}^{\frac{q_1}{p-1}}c_8\int_0^{\infty}\Big(\frac{\int_{B(x,t)}(\mathbf{W}_{\alpha,p}\sigma)^{q_1}+(\mathbf{W}_{\alpha,p}\sigma)^{{\gamma_2}q_1}\mathrm{d}\sigma}{t^{n-\alpha p}}\Big)^{\frac{1}{p-1}}\frac{\mathrm{d} t}{t} \nonumber\\
    &\leq {\lambda_2}^{\frac{q_1}{p-1}}c_9\Big[\mathbf{W}_{\alpha,p}\sigma(x)+ \int_0^{\infty}\Big(\frac{\sigma(B(x,2t))}{t^{n-\alpha p}}\Big)^{\frac{1}{p-1}}\frac{\mathrm{d} t}{t}\Big.\nonumber\\
    &\qquad\qquad \Big.+(\mathbf{W}_{\alpha,p}\sigma(x))^{\frac{q_1}{p-1}{\gamma_2}}\int_0^{\infty}\Big(\frac{\sigma(B(x,t))}{t^{n-\alpha p}}\Big)^{\frac{1}{p-1}}\frac{\mathrm{d} t}{t} \Big]\nonumber\\
    &\leq {\lambda_2}^{\frac{q_1}{p-1}}c_{10}\Big[\mathbf{W}_{\alpha,p}\sigma(x)+(\mathbf{W}_{\alpha,p}\sigma(x))^{\frac{q_1}{p-1}{\gamma_2}+1}\Big], \label{upper bound overline v 1.1}
\end{align}
where $c_{10}=c_{10}(n,p,q_1,q_2,\alpha,{\sigma})$. 
Now, to estimate $\mathbf{W}_{\alpha,p}(\overline{u}^{q_2}\mathrm{d}\sigma)$, we use a similar argument. By replacing $q_1$ by $q_2$ and $\gamma_2$ by $\gamma_1$, we deduce
\begin{equation}\label{similarcalculus}
   \begin{aligned}
       &\int_0^{\infty}\Big(\frac{\int_{B(x,t)}(\mathbf{W}_{\alpha,p}\sigma)^{q_2}\,\mathrm{d}\sigma}{t^{n-\alpha p}}\Big)^{\frac{1}{p-1}}\frac{\mathrm{d} t}{t}\leq \tilde{c}_1\mathbf{W}_{\alpha,p}\sigma(x), \\
       & \int_{B(x,t)}(\mathbf{W}_{\alpha,p}\sigma)^{{\gamma_1} q_2}\,\mathrm{d}\sigma \leq \tilde{c}_2\, (\tilde{I}_1+\tilde{I}_2),
   \end{aligned}
\end{equation}
where
\begin{align*}
    \tilde{I}_1 &= \int_{B(x,t)}\Big[\int_0^t\Big(\frac{\sigma(B(y,r))}{r^{n-\alpha p}}\Big)^{\frac{1}{p-1}}\,\frac{\mathrm{d} r}{r}\Big]^{{\gamma_1} q_2}\,\mathrm{d}\sigma(y)\\
    &= \int_{B(x,t)}\Big[\int_0^t\Big(\frac{\sigma(B(y,r)\cap B(x,2t))}{r^{n-\alpha p}}\Big)^{\frac{1}{p-1}}\,\frac{\mathrm{d} r}{r}\Big]^{{\gamma_1} q_2}\,\mathrm{d}\sigma(y)\\
    & \leq\int_{B(x,2t)}\Big[\mathbf{W}_{\alpha,p}\sigma_{B(x,2t)}\Big]^{{\gamma_1} q_2}\,\mathrm{d}\sigma(y)\leq \tilde{c}_3\,\sigma(B(x,2t))
\end{align*}
and 
\begin{align*}
    \tilde{I}_2& = \int_{B(x,t)}\Big[\int_t^\infty\Big(\frac{\sigma(B(y,r))}{r^{n-\alpha p}}\Big)^{\frac{1}{p-1}}\,\frac{\mathrm{d} r}{r}\Big]^{{\gamma_1} q_2}\,\mathrm{d}\sigma(y) \\
    &\leq \int_{B(x,t)}\Big[\int_t^\infty\Big(\frac{\sigma(B(x,2r))}{r^{n-\alpha p}}\Big)^{\frac{1}{p-1}}\,\frac{\mathrm{d} r}{r}\Big]^{{\gamma_1} q_2}\,\mathrm{d}\sigma(y) \\
    & \leq \tilde{c}_4\,\sigma(B(x,t))\left[\mathbf{W}_{\alpha,p}\sigma (x)\right]^{{\gamma_1} q_2},
\end{align*}
which $\tilde{c}_1,\,\tilde{c}_2,\,\tilde{c}_3$ and $\tilde{c}_4$ are constants depending only on $n,\,p,\,q_1,\,q_2$ and $\sigma$. The previous estimates in combination with \eqref{similarcalculus} yield
\begin{equation*}
   \int_{B(x,t)}(\mathbf{W}_{\alpha,p}\sigma)^{{\gamma_1} q_2}\,\mathrm{d}\sigma\leq \tilde{c}_5\left[\sigma(B(x,2t))+\left(\mathbf{W}_{\alpha,p}\sigma (x)\right)^{{\gamma_1} q_2}\sigma(B(x,t))\right], 
\end{equation*}
where $\tilde{c}_5=\tilde{c}_5(n,p,q_1,q_2,\alpha, {\sigma})$.
Hence, using again \eqref{inequality elementary}, \eqref{wolff L^sloc usual} and the previous inequality, we deduce 
\begin{equation}
    \mathbf{W}_{\alpha,p}(\overline{u}^{q_2}\mathrm{d}\sigma)(x)\leq {\lambda_2}^{\frac{q_2}{p-1}}\tilde{c}_6\Big[\mathbf{W}_{\alpha,p}\sigma(x)+(\mathbf{W}_{\alpha,p}\sigma(x))^{\frac{q_1}{p-1}{\gamma_2}+1}\Big], \label{upper bound overline u 1.1}
\end{equation}
where $\tilde{c}_6=\tilde{c}_6(n,p,q_1,q_2,\alpha,{\sigma})$. We recall that $(q_1\gamma_2)/(p-1)+1=\gamma_1$ and $(q_2\gamma_1)/(p-1)+1=\gamma_2$. Therefore, picking $\lambda_2$ such that 
\begin{equation*}
    \lambda_2=\max\{{c_{10}}^{\frac{p-1}{p-1-q_1}},(\tilde{c}_6)^{\frac{p-1}{p-1-q_2}}, \lambda_1\},
\end{equation*}
we conclude from \eqref{upper bound overline v 1.1} and \eqref{upper bound overline u 1.1} that 
\begin{equation*}
\begin{aligned}
    & \overline{u}\geq \mathbf{W}_{\alpha,p}(\overline{v}^{q_1}\mathrm{d}\sigma), &  \overline{u}\geq \underline{u},\\
    &\overline{v}\geq \mathbf{W}_{\alpha,p}(\overline{u}^{q_2}\mathrm{d}\sigma), &\overline{v}\geq \underline{v},
\end{aligned}
\end{equation*}
which completes the proof of Claim~\ref{claim3}.

Now, in order to obtain solutions to \eqref{sistemawolff}, we use a standard iteration argument and Monotone Convergence Theorem. For convenience, we repeat the main idea. Let $u_0=\underline{u}= \lambda_1(\mathbf{W}_{\alpha,p}\sigma)^{\gamma_1}$ and $v_0=\underline{v}=\lambda_1(\mathbf{W}_{\alpha,p}\sigma)^{{\gamma_2}}$, where $\lambda_1$ is the constant obtained in Claim~\ref{claim2}. Clearly, $u_0\leq \overline{u}$ and $v_0\leq \overline{v}$. We set $u_1=\mathbf{W}_{\alpha,p}(v_0^{q_1}\mathrm{d}\sigma)$ and $v_1=\mathbf{W}_{\alpha,p}(u_0^{q_2}\mathrm{d}\sigma)$. By Claim~\ref{claim2}, we have $u_1\geq u_0$ and $v_1\geq v_0$. Let us construct the sequence of pair of functions $(u_j,v_j)$  in $\mathds{R}^n$, with $(u_j,v_j)\in L_{\mathrm{loc}}^{q_2}(\mathds{R}^n,\mathrm{d}\sigma)\times L_{\mathrm{loc}}^{q_1}(\mathds{R}^n,\mathrm{d}\sigma)$ such that
\begin{equation}\label{constructsolutionwollfsistems}
\left\{\begin{aligned}
&u_j=\mathbf{W}_{\alpha,p}(v_{j-1}^{q_1}\mathrm{d} \sigma) \quad \mbox{in}\quad \mathds{R}^n,\\
& v_j=\mathbf{W}_{\alpha,p}(u_{j-1}^{q_1}\mathrm{d} \sigma)\quad \mbox{in}\quad \mathds{R}^n, 
\end{aligned}\right.    
\end{equation}
Indeed, by induction, we can show that the sequences $\{u_j\}$ and $\{v_j\}$ are nondecreasing, with $\underline{u}\leq u_j\leq\overline{u}$ and $\underline{v}\leq v_j\leq\overline{v}$ (for $j=0,1,\ldots$). Due to Lemma~\ref{regularidade solution wolff}, both $(\underline{u}, \underline{v})$ and $(\overline{u},\overline{v})$ belong to $L_{\mathrm{loc}}^{q_2}(\mathds{R}^n,\mathrm{d}\sigma)\times L_{\mathrm{loc}}^{q_1}(\mathds{R}^n,\mathrm{d}\sigma)$, whence $(u_j,v_j)\in L_{\mathrm{loc}}^{q_2}(\mathds{R}^n,\mathrm{d}\sigma)\times L_{\mathrm{loc}}^{q_1}(\mathds{R}^n,\mathrm{d}\sigma)$ for all $j=1, 2, 3, \ldots$. 

Using the  Monotone Convergence Theorem and passing to the limit as $j\to \infty$ in \eqref{constructsolutionwollfsistems}, we see that there exist nonnegative functions $u=\lim u_j$ and $v=\lim v_j$ such that $(u,v)\in L_{\mathrm{loc}}^{q_2}(\mathds{R}^n,\mathrm{d}\sigma)\times L_{\mathrm{loc}}^{q_1}(\mathds{R}^n,\mathrm{d}\sigma)$, which the pair $(u,v)$ satisfies \eqref{sistemawolff} with $\underline{u}\leq u\leq\overline{u}$ and $\underline{v}\leq v\leq\overline{v}$. Thus for a appropriate constant $c=c(n,p,\alpha,q_1,q_2,C_{\sigma})$ we obtain that $u$ and $v$ satisfies \eqref{estimate upper and lower}. This completes the proof of Theorem \ref{existencewollfsystem}.

\subsection{Proof of Theorem~\ref{existencewolffsystemlocweaker}}

As in the proof of the previous Theorem,  
we will proceed by the method of sub- and super-solutions. Let $\underline{u}=\lambda_1 (\mathbf{W}_{\alpha,p}\sigma)^{\gamma_1}$ and $\underline{v}=\lambda_1 (\mathbf{W}_{\alpha,p}\sigma)^{{\gamma_2}}$, where $\gamma_1$ and $\gamma_2$ are given in \eqref{gamma1} and \eqref{gamma2}, respectively. By Claim~\ref{claim2}, we already know that $(\underline{u},\underline{v})$ is a subsolution to \eqref{sistemawolff} if $\lambda_1>0$ is picked to be sufficiently small. It remains now exhibits a supersolution under assumption \eqref{weakercontidion}.

\begin{Claim}\label{claim4}
   There exists $\lambda_2>0$ sufficiently large such that
\begin{equation*}
(\overline{u},\overline{v})=\big(\lambda_2\left(\mathbf{W}_{\alpha,p}\sigma + \left(\mathbf{W}_{\alpha,p}\sigma\right)^{\gamma_1} \right), \lambda_2\left(\mathbf{W}_{\alpha,p}\sigma + \left(\mathbf{W}_{\alpha,p}\sigma\right)^{{\gamma_2}} \right)\big)
\end{equation*}
is a supersolution to \eqref{sistemawolff}, which $\overline{u}\geq \underline{u}$ and $\overline{v}\geq \underline{v}$.
\end{Claim}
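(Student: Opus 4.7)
The plan is to follow exactly the sub-/supersolution scheme of Claim~\ref{claim3}, but to feed every step through the integral hypothesis \eqref{weakercontidion} in place of the capacity-based Lemma~\ref{regularidade wolff}. Taking $\lambda_2\geq\lambda_1$ makes $\overline{u}\geq\underline{u}$ and $\overline{v}\geq\underline{v}$ automatic, since $\overline{u}\geq \lambda_2(\mathbf{W}_{\alpha,p}\sigma)^{\gamma_1}\geq \underline{u}$ and similarly for the second coordinate. So the only real task is to verify the two supersolution inequalities $\overline{u}\geq \mathbf{W}_{\alpha,p}(\overline{v}^{q_1}\mathrm{d}\sigma)$ and $\overline{v}\geq \mathbf{W}_{\alpha,p}(\overline{u}^{q_2}\mathrm{d}\sigma)$.

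I will detail the first; the second is symmetric after swapping $q_1\leftrightarrow q_2$ and $\gamma_1\leftrightarrow\gamma_2$. Using \eqref{inequality elementary} on $\overline{v}^{q_1}$ gives the pointwise bound $\overline{v}^{q_1}\leq 2^{q_1}\lambda_2^{q_1}\bigl((\mathbf{W}_{\alpha,p}\sigma)^{q_1}+(\mathbf{W}_{\alpha,p}\sigma)^{\gamma_2 q_1}\bigr)$, and applying \eqref{inequality elementary} once more inside the defining integrand of $\mathbf{W}_{\alpha,p}$, now with exponent $1/(p-1)$, yields the sub-additivity
\[
\mathbf{W}_{\alpha,p}((f+g)\mathrm{d}\sigma)\leq 2^{\frac{1}{p-1}}\bigl(\mathbf{W}_{\alpha,p}(f\mathrm{d}\sigma)+\mathbf{W}_{\alpha,p}(g\mathrm{d}\sigma)\bigr).
\]
Combining these two facts, $\mathbf{W}_{\alpha,p}(\overline{v}^{q_1}\mathrm{d}\sigma)$ is dominated by a constant times $\lambda_2^{q_1/(p-1)}$ times the sum of a \emph{main} piece $\mathbf{W}_{\alpha,p}((\mathbf{W}_{\alpha,p}\sigma)^{\gamma_2 q_1}\mathrm{d}\sigma)$ and a \emph{lower-order} piece $\mathbf{W}_{\alpha,p}((\mathbf{W}_{\alpha,p}\sigma)^{q_1}\mathrm{d}\sigma)$. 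The main piece is exactly of the form appearing on the left-hand side of \eqref{weakercontidion}, hence is controlled by $\lambda\bigl(\mathbf{W}_{\alpha,p}\sigma+(\mathbf{W}_{\alpha,p}\sigma)^{\gamma_1}\bigr)$.

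The lower-order piece is where the main obstacle lies: in Claim~\ref{claim3} it was precisely such a term that was handled by Lemma~\ref{regularidade wolff}, which is no longer at our disposal. My workaround is the elementary pointwise inequality $(\mathbf{W}_{\alpha,p}\sigma)^{q_1}\leq 1+(\mathbf{W}_{\alpha,p}\sigma)^{\gamma_2 q_1}$ (valid since $\gamma_2\geq 1$); then sub-additivity together with $\mathbf{W}_{\alpha,p}(1\cdot\mathrm{d}\sigma)=\mathbf{W}_{\alpha,p}\sigma$ dominates this piece by $\mathbf{W}_{\alpha,p}\sigma$ plus the already-estimated main piece, which by a second invocation of \eqref{weakercontidion} is again bounded by $C\bigl(\mathbf{W}_{\alpha,p}\sigma+(\mathbf{W}_{\alpha,p}\sigma)^{\gamma_1}\bigr)$.

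Collecting everything yields $\mathbf{W}_{\alpha,p}(\overline{v}^{q_1}\mathrm{d}\sigma)\leq C\lambda_2^{q_1/(p-1)}\bigl(\mathbf{W}_{\alpha,p}\sigma+(\mathbf{W}_{\alpha,p}\sigma)^{\gamma_1}\bigr)$ for some $C=C(n,p,\alpha,q_1,q_2,\lambda)$, and the symmetric argument produces $\mathbf{W}_{\alpha,p}(\overline{u}^{q_2}\mathrm{d}\sigma)\leq \widetilde{C}\lambda_2^{q_2/(p-1)}\bigl(\mathbf{W}_{\alpha,p}\sigma+(\mathbf{W}_{\alpha,p}\sigma)^{\gamma_2}\bigr)$. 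Since $q_i<p-1$ the self-improvement exponent $q_i/(p-1)<1$, so choosing
\[
\lambda_2:=\max\Bigl\{\lambda_1,\,C^{\frac{p-1}{p-1-q_1}},\,\widetilde{C}^{\frac{p-1}{p-1-q_2}}\Bigr\}
\]
absorbs both constants and simultaneously delivers the two supersolution inequalities, finishing the claim. The delicate point throughout is that we can no longer localize through $\sigma_B$ and invoke $L^s_{\loc}(\mathrm{d}\sigma)$ estimates for $\mathbf{W}_{\alpha,p}\sigma$; the entire argument must run purely through the global integral hypothesis \eqref{weakercontidion}, which is why the elementary comparison $(\mathbf{W}_{\alpha,p}\sigma)^{q_1}\leq 1+(\mathbf{W}_{\alpha,p}\sigma)^{\gamma_2 q_1}$ is the key technical device.
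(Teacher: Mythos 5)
Your proposal is correct, and the overall scheme is the one used in the paper: decompose $\overline v^{q_1}$ via \eqref{inequality elementary}, apply hypothesis \eqref{weakercontidion} directly to $\mathbf{W}_{\alpha,p}\big((\mathbf W_{\alpha,p}\sigma)^{\gamma_2 q_1}\mathrm d\sigma\big)$, and isolate the remaining lower-order term $\mathbf W_{\alpha,p}\big((\mathbf W_{\alpha,p}\sigma)^{q_1}\mathrm d\sigma\big)$. Where you genuinely diverge is in the treatment of that last term. The paper applies H\"older's inequality with exponent $\gamma_2$ and its conjugate $\gamma_2'=\tfrac{(p-1)(p-1+q_2)}{q_2(p-1+q_1)}$ inside each $\int_{B(x,t)}$, followed by Young's inequality, to obtain
\[
\int_{B(x,t)}(\mathbf W_{\alpha,p}\sigma)^{q_1}\,\mathrm d\sigma \;\leq\; c_2\Big(\int_{B(x,t)}(\mathbf W_{\alpha,p}\sigma)^{\gamma_2 q_1}\,\mathrm d\sigma + \sigma(B(x,t))\Big),
\]
and then integrates in $t$. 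Your replacement---the pointwise inequality $(\mathbf W_{\alpha,p}\sigma)^{q_1}\le 1+(\mathbf W_{\alpha,p}\sigma)^{\gamma_2 q_1}$, valid because $\gamma_2\ge 1$---produces the same decomposition in one line, without computing the conjugate exponent or invoking Young. Both routes then feed the $\gamma_2 q_1$-power piece through \eqref{weakercontidion} a second time, generate the term $\mathbf W_{\alpha,p}\sigma$ from $\sigma(B(x,t))$, and close with the identical self-improvement step using $q_i/(p-1)<1$ to choose $\lambda_2=\max\{\lambda_1, C^{(p-1)/(p-1-q_1)},\widetilde C^{(p-1)/(p-1-q_2)}\}$. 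So what you gain is a mild simplification of one intermediate estimate; the structure, constants, and conclusion are otherwise the same. Your one-line observation that $\lambda_2\ge\lambda_1$ gives $\overline u\ge\underline u$, $\overline v\ge\underline v$ matches the paper as well, since $\overline u\ge\lambda_2(\mathbf W_{\alpha,p}\sigma)^{\gamma_1}\ge\lambda_1(\mathbf W_{\alpha,p}\sigma)^{\gamma_1}=\underline u$.
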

\noindent Indeed, using  \eqref{inequality elementary} and assumption \eqref{weakercontidion}, we have the following estimate for $\mathbf{W}_{\alpha,p}(\overline{v}^{q_1}\mathrm{d} \sigma)$:
\begin{align}
  \mathbf{W}_{\alpha,p}{(\overline{v}^{q_1}\mathrm{d}\sigma)}  & \leq c_1 {\lambda_2}^{\frac{q_1}{p-1}} \mathbf{W}_{\alpha,p}\left(\left(\mathbf{W}_{\alpha,p}\sigma\right)^{q_1}\mathrm{d} \sigma\right)+ c_1 {\lambda_2}^{\frac{q_1}{p-1}}\mathbf{W}_{\alpha,p}\left(\left(\mathbf{W}_{\alpha,p}\sigma\right)^{\gamma_2q_1}\mathrm{d} \sigma\right)\nonumber \\
    & \leq c_1 {\lambda_2}^{\frac{q_1}{p-1}}\mathbf{W}_{\alpha,p}\left(\left(\mathbf{W}_{\alpha,p}\sigma\right)^{q_1}\mathrm{d} \sigma\right)+ c_1 {\lambda_2}^{\frac{q_1}{p-1}}\lambda\left(\mathbf{W}_{\alpha,p}\sigma+(\mathbf{W}_{\alpha,p}\sigma)^{\gamma_1} \right) \label{estimateweakerv},
\end{align}
where $c_1=c_1(p,q_1)>0$. In order to establish a convenient upper bound to $\mathbf{W}_{\alpha,p}{(\overline{v}^{q_1}\mathrm{d}\sigma)}$, we need to estimate the first term in the previous inequality.
By H\"{o}lder's inequality and Young's inequality with the exponent $\gamma_2$ and its conjugate
\begin{equation*}
    \gamma_2'=\frac{(p-1)(p-1+q_2)}{q_2(p-1+q_1)},
\end{equation*}
we obtain
\begin{align}
   \int_{B(x,t)}(\mathbf{W}_{\alpha,p}\sigma)^{q_1}\,\mathrm{d} \sigma & \leq \Big(\int_{B(x,t)}(\mathbf{W}_{\alpha,p}\sigma)^{\gamma_2q_1}\,\mathrm{d} \sigma\Big)^{\frac{1}{\gamma_2}} \left[\sigma(B(x,t))\right]^{\frac{1}{\gamma_2'}} \nonumber \\
    & \leq c_2\Big(\int_{B(x,t)}(\mathbf{W}_{\alpha,p}\sigma)^{\gamma_2q_1}\,\mathrm{d} \sigma+\sigma(B(x,t))\Big), \label{estimate1}
\end{align}
where $c_2=c_2(p,q_1,q_2)>0$. 

From \eqref{wolff potential} we can write for $x\in\mathds{R}^n$
\begin{equation*}
    \mathbf{W}_{\alpha,p}\big(\big(\mathbf{W}_{\alpha,p}\sigma\big)^{q_1}\mathrm{d} \sigma\big)(x)=\int_{0}^{\infty}\Big(\frac{\int_{B(x,t)}(\mathbf{W}_{\alpha,p}\sigma)^{q_1}\,\mathrm{d} \sigma}{t^{n-\alpha p}}\Big)^{\frac{1}{p-1}}\frac{\mathrm{d} t}{t},
\end{equation*}
which together with  \eqref{inequality elementary} and \eqref{estimate1} implies 
\begin{multline*}
    \mathbf{W}_{\alpha,p}\big(\big(\mathbf{W}_{\alpha,p}\sigma\big)^{q_1}\mathrm{d} \sigma\big)(x) \leq \\ c_3 \Big[\int_{0}^\infty\Big(\frac{\int_{B(x,t)}(\mathbf{W}_{\alpha,p}\sigma)^{\gamma_2q_1}\,\mathrm{d} \sigma}{t^{n-\alpha p}}\Big)^{\frac{1}{p-1}}\frac{\mathrm{d} t}{t}+\int_{0}^{\infty}\Big(\frac{\sigma(B(x,t))}{t^{n-\alpha p}}\Big)^{\frac{1}{p-1}}\frac{\mathrm{d} t}{t}\Big].
\end{multline*}
Using condition \eqref{weakercontidion} in the previous inequality, where $c_3=c_3(p,q_1,q_2)>0$, we deduce 
\begin{align*}
    \mathbf{W}_{\alpha,p}\big(\big(\mathbf{W}_{\alpha,p}\sigma\big)^{q_1}\mathrm{d} \sigma\big)(x)& \leq c_3 \left(\mathbf{W}_{\alpha,p}\left((\mathbf{W}_{\alpha,p}\sigma)^{\gamma_2q_1}\,\mathrm{d} \sigma\right)(x) +\mathbf{W}_{\alpha,p}\sigma(x)\right) \\
    & \leq c_3\mathbf{W}_{\alpha,p}\sigma(x) +   c_3\lambda \left(\mathbf{W}_{\alpha,p}\sigma(x) + (\mathbf{W}_{\alpha,p}\sigma(x))^{\gamma_1}\right).
\end{align*}
 The last
estimate in combination with  \eqref{estimateweakerv} yield
\begin{multline*}
    \mathbf{W}_{\alpha,p}{(\overline{v}^{q_1}\mathrm{d}\sigma)} \leq \\
    {\lambda_2}^{\frac{q_1}{p-1}}c_1\left[c_3 \mathbf{W}_{\alpha,p}\sigma +   c_3\lambda \left(\mathbf{W}_{\alpha,p}\sigma + (\mathbf{W}_{\alpha,p}\sigma)^{\gamma_1}\right)\right] +c_1 {\lambda_2}^{\frac{q_1}{p-1}}\lambda\left(\mathbf{W}_{\alpha,p}\sigma+(\mathbf{W}_{\alpha,p}\sigma)^{\gamma_1} \right). 
\end{multline*}
Thus, choosing $c_4=c_1(c_3+c_3\lambda+\lambda)$, we concluded that 
\begin{equation}\label{upper bound overline v 1.3}
    \mathbf{W}_{\alpha,p}{(\overline{v}^{q_1}\mathrm{d}\sigma)}\leq {\lambda_2}^{\frac{q_1}{p-1}}c_4\left(\mathbf{W}_{\alpha,p}\sigma+(\mathbf{W}_{\alpha,p}\sigma)^{\gamma_1}\right),
\end{equation}
Now, by a similar argument, we establish an upper bound to $\mathbf{W}_{\alpha,p}(\overline{u}^{q_2}\mathrm{d}\sigma)$. Again, we can replace $q_1$ by $q_2$ and $\gamma_2$ by $\gamma_1$, to obtain from \eqref{inequality elementary} and \eqref{weakercontidion} that

\begin{align}
  \mathbf{W}_{\alpha,p}{(\overline{u}^{q_2}\mathrm{d}\sigma)}  & \leq \tilde{c}_1 {\lambda_2}^{\frac{q_2}{p-1}} \mathbf{W}_{\alpha,p}\left(\left(\mathbf{W}_{\alpha,p}\sigma\right)^{q_2}\mathrm{d} \sigma\right)+ \tilde{c}_1 {\lambda_2}^{\frac{q_2}{p-1}}\mathbf{W}_{\alpha,p}\left(\left(\mathbf{W}_{\alpha,p}\sigma\right)^{\gamma_1q_2}\mathrm{d} \sigma\right)\nonumber \\
    & \leq \tilde{c}_1 {\lambda_2}^{\frac{q_2}{p-1}}\mathbf{W}_{\alpha,p}\left(\left(\mathbf{W}_{\alpha,p}\sigma\right)^{q_2}\mathrm{d} \sigma\right)+ \tilde{c}_1 {\lambda_2}^{\frac{q_2}{p-1}}\lambda\left(\mathbf{W}_{\alpha,p}\sigma+(\mathbf{W}_{\alpha,p}\sigma)^{\gamma_2} \right) \label{estimateweakeru},
\end{align}
where $\tilde{c}_1=\tilde{c}_1(p,q_2)>0$. We also have 
\begin{equation}\label{estimate2}
\mathbf{W}_{\alpha,p}\left(\Big(\mathbf{W}_{\alpha,p}\sigma\right)^{q_2}\mathrm{d} \sigma\Big) \leq \tilde{c}_2 \mathbf{W}_{\alpha,p}\sigma +   \tilde{c}_2\lambda \left(\mathbf{W}_{\alpha,p}\sigma + (\mathbf{W}_{\alpha,p}\sigma)^{\gamma_2}\right),
\end{equation}
for some constant $\tilde{c}_2=\tilde{c}_2(p,q_1,q_2)>0$. From \eqref{weakercontidion}, \eqref{estimateweakeru} and \eqref{estimate2}, it follows
\begin{multline*}
    \mathbf{W}_{\alpha,p}{(\overline{u}^{q_2}\mathrm{d}\sigma)} \leq\\ {\lambda_2}^{\frac{q_2}{p-1}}\tilde{c}_1\left[\tilde{c}_2\mathbf{W}_{\alpha,p}\sigma +   \tilde{c}_2\lambda \left(\mathbf{W}_{\alpha,p}\sigma + (\mathbf{W}_{\alpha,p}\sigma)^{\gamma_2}\right)\right] +\tilde{c}_1 {\lambda_2}^{\frac{q_2}{p-1}}\lambda\left(\mathbf{W}_{\alpha,p}\sigma+(\mathbf{W}_{\alpha,p}\sigma)^{\gamma_2} \right). 
\end{multline*}
Thus, choosing $\tilde{c}_3=\tilde{c}_1(\tilde{c}_2+\tilde{c}_2\lambda+\lambda)$, we concluded that
\begin{equation}\label{upper bound overline u 1.3}
    \mathbf{W}_{\alpha,p}{(\overline{u}^{q_2}\mathrm{d}\sigma)} \leq {\lambda_2}^{\frac{q_2}{p-1}}\tilde{c}_3\left(\mathbf{W}_{\alpha,p}\sigma+(\mathbf{W}_{\alpha,p}\sigma)^{\gamma_2}\right).
\end{equation}
Therefore, picking $\lambda_2$ such that
\begin{equation*}
    \lambda_2=\max\{{c_4}^{\frac{p-1}{p-1-q_1}},(\tilde{c}_3)^{\frac{p-1}{p-1-q_2}},\lambda_1\},
\end{equation*}
we finally see from \eqref{upper bound overline v 1.3} and \eqref{upper bound overline u 1.3} that
\begin{equation*}
    \begin{aligned}
    & \overline{u}\geq \mathbf{W}_{\alpha,p}(\overline{v}^{q_1}\mathrm{d}\sigma), &  \overline{u}\geq \underline{u},\\
    &\overline{v}\geq \mathbf{W}_{\alpha,p}(\overline{u}^{q_2}\mathrm{d}\sigma), &\overline{v}\geq \underline{v},
\end{aligned}
\end{equation*}
which completes the proof of Claim~\ref{claim4}. 
\medskip

Using iterations as in \eqref{constructsolutionwollfsistems}, and the Monotone Convergence Theorem, we ensure that there exists a solution $(u,v)$ to \eqref{sistemawolff} which satisfies \eqref{estimate upper and lower}, with $c=c(n,p,q_1,q_2,\alpha,\lambda)$.
\medskip

Conversely, suppose that there exists a nontrivial solution $(u,v)$ to \eqref{sistemawolff} such that \eqref{estimate upper and lower} holds. By the lower bounds in \eqref{estimate upper and lower}, we have 
\begin{align*}
    & u=\mathbf{W}_{\alpha,p}(v^{q_1}\mathrm{d}\sigma)\geq (c^{-1})^\frac{q_1}{p-1}\mathbf{W}_{\alpha,p}\Big((\mathbf{W}_{\alpha,p}\sigma)^{\gamma_2 q_1}\mathrm{d}\sigma\Big), \\
    &  v=\mathbf{W}_{\alpha,p}(u^{q_2}\mathrm{d}\sigma)\geq (c^{-1})^\frac{q_2}{p-1}\mathbf{W}_{\alpha,p}\Big((\mathbf{W}_{\alpha,p}\sigma)^{\gamma_1 q_2}\mathrm{d}\sigma\Big),
\end{align*}
where $c>0$ is a constant. The previous estimates, in combination with the upper bounds in \eqref{estimate upper and lower}, yield
\begin{align*}
   & \mathbf{W}_{\alpha,p}\left((\mathbf{W}_{\alpha,p}\sigma)^{\gamma_2q_1} \mathrm{d}\sigma\right)\leq \lambda\left(\mathbf{W}_{\alpha,p}\sigma + \left(\mathbf{W}_{\alpha,p}\sigma\right)^{\gamma_1} \right) <\infty \mbox{ - a.e.},\\
   & \mathbf{W}_{\alpha,p}\left((\mathbf{W}_{\alpha,p}\sigma)^{\gamma_1q_2}\mathrm{d}\sigma\right)\leq \lambda\left(\mathbf{W}_{\alpha,p}\sigma + \left(\mathbf{W}_{\alpha,p}\sigma\right)^{\gamma_2} \right) <\infty \mbox{ - a.e.},
\end{align*}
where $\lambda$ can be defined as $\lambda=\max\{{c}^{(p-1+q_1)/(p-1)}, {c}^{(p-1+q_2)/(p-1)}\}$. This completes the proof of Theorem~\ref{existencewolffsystemlocweaker}.

\section{Applications}\label{section4}

In this section, we prove Theorem~\ref{solutionplaplacian general}, Theorem~\ref{solutionplaplacian general weaker}, and Theorem~\ref{thm frac system}. Let us recall some definitions and basic results. First, let us state the local version of Wolff’s inequality in the case $\Omega=\mathds{R}^n$, 
   \begin{equation}\label{wolff's inequality local}
       \mu\in  {M}^+(\mathds{R}^n)\cap W_{\mathrm{loc}}^{-1,p'}(\mathds{R}^n)\Longleftrightarrow \int_B \mathbf{W}_{1,p}\mu_B\,\mathrm{d}\mu < \infty \quad \mbox{for all balls }B,
   \end{equation}
   where $B=B(x,R)$, $\mu_B=\chi_B\mu$ and $W_{\mathrm{loc}}^{-1,p'}(\mathds{R}^n)=W_{\mathrm{loc}}^{1,p}(\mathds{R}^n)^*$ is the dual Sobolev space  (see \cite[Theorem~4.55]{MR1411441} for more details).

In view of Theorem~\ref{constante Maly} and \eqref{wolff's inequality local}, one can check that if $w\in W_{\mathrm{loc}}^{1,p}(\mathds{R}^n)$ solves $-\Delta_p w=\omega$ in the distributional sense with $\omega\in M^+(\mathds{R}^n)$, then $\omega\in W_{\mathrm{loc}}^{-1,p'}(\mathds{R}^n)$. 
We state now a type of converse result proved in \cite[Lemma~3.3]{MR3567503}.
\begin{lemma}\label{soution in W_loc}
Let $1<p<n$ and $\omega\in M^+(\mathds{R}^n) \cap W_{\mathrm{loc}}^{-1,p'}(\mathds{R}^n)$. Suppose that $w$ is a nonnegative $p$-superharmonic solution to 
\begin{equation*}
\left\{\begin{aligned}
       & -\Delta_p w =\omega \quad \mbox{in}\quad \mathds{R}^n,\\
        & \varliminf_{|x|\to \infty}w(x)=0.
    \end{aligned}
     \right.
    \end{equation*}
    Then    $w\in W_{\mathrm{loc}}^{1,p}(\mathds{R}^n)\cap L_{\mathrm{loc}}^{1}(\mathds{R}^n, \mathrm{d}\omega)$.
\end{lemma}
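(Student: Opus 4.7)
The plan is to prove the two inclusions separately. The $d\omega$-integrability will follow from combining the Kilpel\"ainen--Mal\'y upper bound (Theorem~\ref{constante Maly}) with the local Wolff inequality~\eqref{wolff's inequality local}; the Sobolev regularity is more delicate and I would handle it through an approximation argument yielding a uniform Caccioppoli estimate.

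\textbf{Step 1: $w \in L^1_{\mathrm{loc}}(\mathds{R}^n,d\omega)$.} By Theorem~\ref{constante Maly}, $w \le K\,\mathbf{W}_{1,p}\omega$ pointwise, so it suffices to bound $\mathbf{W}_{1,p}\omega$ in $L^1_{\mathrm{loc}}(d\omega)$. Fix a ball $B = B(x_0,R)$ and decompose $\omega = \omega_{2B}+\omega_{(2B)^c}$. The elementary quasi-subadditivity $(a+b)^{1/(p-1)} \le c(a^{1/(p-1)}+b^{1/(p-1)})$ transfers to $\mathbf{W}_{1,p}$, so it is enough to bound each piece separately. The near-field piece is controlled by the hypothesis $\omega \in W_{\mathrm{loc}}^{-1,p'}$ together with \eqref{wolff's inequality local}:
\[
\int_B \mathbf{W}_{1,p}\omega_{2B}\,d\omega \;\le\; \int_{2B}\mathbf{W}_{1,p}\omega_{2B}\,d\omega_{2B} \;<\; \infty.
\]
For the far-field piece, arguing geometrically as in the proof of Lemma~\ref{regularidade solution wolff} (for $y\in B$ only scales $t\ge R$ contribute, since $B(y,t)\subset 2B$ when $t<R$; and for such $t$ one has $B(y,t)\subset B(x_0,2t)$), I obtain that $\sup_{y\in B}\mathbf{W}_{1,p}\omega_{(2B)^c}(y)$ is dominated by a tail of $\mathbf{W}_{1,p}\omega$ at $x_0$. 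This tail is finite because the lower bound in Theorem~\ref{constante Maly} together with $\varliminf_{|x|\to\infty}w(x)=0$ forces $\mathbf{W}_{1,p}\omega\not\equiv\infty$. Multiplying by $\omega(B)<\infty$ closes the step.

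\textbf{Step 2: $w \in W_{\mathrm{loc}}^{1,p}(\mathds{R}^n)$.} The natural attempt --- testing $-\Delta_p w=\omega$ in the generalized-gradient sense against $\varphi = \eta^p T_k(w)$ --- runs into the difficulty that $|Dw|^{p-2}Dw$ is only known to lie in $L^r_{\mathrm{loc}}$ for $r < n/(n-1) < p'$ (by Kilpel\"ainen--Mal\'y), so the cross-term $p\int \eta^{p-1}T_k(w)|Dw|^{p-2}Dw\cdot\nabla\eta\,dx$ cannot be directly absorbed. My plan is therefore to approximate: choose smooth $\omega_j \to \omega$ weakly and in $W_{\mathrm{loc}}^{-1,p'}$ with $\mathbf{W}_{1,p}\omega_j \le c\,\mathbf{W}_{1,p}\omega$ uniformly (e.g.\ mollifying $\omega$ against a standard kernel), let $w_j \in W_{\mathrm{loc}}^{1,p}(\mathds{R}^n)$ be the corresponding $p$-superharmonic solution with $\varliminf w_j = 0$ (fully Sobolev-regular since the data is smooth), and derive the Caccioppoli bound
\[
\int \eta^p|\nabla w_j|^p\,dx \;\le\; C\int \eta^p w_j\,d\omega_j \;+\; C\int w_j^p|\nabla\eta|^p\,dx
\]
by testing against $\varphi = \eta^p w_j$ and using Young's inequality. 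Step~1 applied to $\omega_j$, combined with the uniform bound $w_j \le cK\,\mathbf{W}_{1,p}\omega$, makes the right-hand side uniform in $j$. The weak continuity of the $p$-Laplacian (Theorem~\ref{weak continuity p-laplacian}) identifies $w_j \to w$ up to a subsequence, and Fatou's lemma transfers the uniform bound to $\int \eta^p|Dw|^p\,dx < \infty$.

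\textbf{The hard part.} The crux is choosing the approximation $\omega_j$ so that three conditions hold simultaneously: $\omega_j\to\omega$ weakly and in $W_{\mathrm{loc}}^{-1,p'}$, the Wolff potentials $\mathbf{W}_{1,p}\omega_j$ stay uniformly dominated by a fixed multiple of $\mathbf{W}_{1,p}\omega$, and $\varliminf_{|x|\to\infty}w_j = 0$ (so that Theorem~\ref{constante Maly} applies to each $w_j$ with a uniform constant). A standard mollification by a radially decreasing kernel, combined with a cutoff matched to the support, should furnish this; once such an approximation is in place, the rest of the argument (Caccioppoli for smooth data, weak limit, Fatou) is routine.
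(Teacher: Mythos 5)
Your proof of the statement cannot be compared against the paper's own, because the paper does not supply one: Lemma~\ref{soution in W_loc} is imported verbatim from Cao--Verbitsky \cite[Lemma~3.3]{MR3567503} and used as a black box. So the only question is whether your argument is self-contained and correct.

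Step~1 is sound. The splitting $\omega=\omega_{2B}+\omega_{(2B)^c}$, quasi-subadditivity of $\mathbf{W}_{1,p}$, the near-field bound from \eqref{wolff's inequality local}, and the geometric tail estimate (exactly the computation in Lemma~\ref{regularidade solution wolff}, with finiteness of the tail coming from $\mathbf{W}_{1,p}\omega\not\equiv\infty$, which Theorem~\ref{constante Maly} gives you since $w$ is finite) close the step cleanly.

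Step~2 has genuine gaps, and the honest ``hard part'' caveat you attach to it does not account for the most serious one. Your Caccioppoli estimate
\[
\int \eta^p|\nabla w_j|^p\,\mathrm{d}x \;\le\; C\int \eta^p w_j\,\mathrm{d}\omega_j \;+\; C\int w_j^p|\nabla\eta|^p\,\mathrm{d}x
\]
requires the second term to be finite and uniform in $j$. You control it by $w_j\le cK\,\mathbf{W}_{1,p}\omega$, but that only helps if $\mathbf{W}_{1,p}\omega\in L^p_{\mathrm{loc}}(\mathds{R}^n,\mathrm{d}x)$, which is \emph{not} automatic: a general nonnegative $p$-superharmonic function belongs a priori only to $L^s_{\mathrm{loc}}(\mathrm{d}x)$ for $s<n(p-1)/(n-p)$, and this exponent is below $p$ whenever $p^2\le n$. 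The Lebesgue-$L^p$ membership has to be \emph{deduced} from the hypothesis $\omega\in W^{-1,p'}_{\mathrm{loc}}$ (for instance via the Sobolev embedding of the Dirichlet solutions $W^{1,p}_0(B)\hookrightarrow L^{p^*}(B)$ and a local version of the lower Kilpel\"ainen--Mal\'y bound), and you never establish it. Without it the Caccioppoli bound is circular: its right-hand side is finite precisely when its left-hand side is. Separately, your appeal to Theorem~\ref{weak continuity p-laplacian} is in the wrong direction: that theorem says $w_j\to w$ pointwise (with $w$ $p$-superharmonic) forces $\mu[w_j]\rightharpoonup\mu[w]$; it does not say that $\mu[w_j]\rightharpoonup\omega$ forces $w_j\to w$. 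Identifying the limit of your approximants with the \emph{given} $w$ requires a compactness or uniqueness argument that is absent. Finally, the mollification you sketch is not known (and seems unlikely in general) to satisfy the uniform domination $\mathbf{W}_{1,p}\omega_j\le c\,\mathbf{W}_{1,p}\omega$ that your chain of estimates relies on; this is the very point you flag as delicate, and it is left entirely open. The standard route (the one the cited Cao--Verbitsky lemma uses) avoids mollification altogether by exhausting $\mathds{R}^n$ with balls $B_j$ and using the variational Dirichlet solutions $u_j\in W^{1,p}_0(B_j)$ --- which are genuinely in the energy class, making the Caccioppoli estimate and the energy identity legitimate --- together with comparison to show $u_j\nearrow w$; you may want to redirect Step~2 along those lines.
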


Lemma~\ref{soution in W_loc} in combination with  \eqref{wolff's inequality local}  are be crucial to prove that the solutions to Syst.~\eqref{plaplaciansystem} belong to $W_{\mathrm{loc}}^{1,p}(\mathds{R}^n)\times W_{\mathrm{loc}}^{1,p}(\mathds{R}^n)$.

Now, we recall a basic fact on Wolff's potential (see \cite[Corollary~3.2 (iii)]{MR3567503}): 
If $1<p<\infty$ and $0<\alpha<n/p$, for any $\omega\in M^+(\mathds{R}^n)$, it holds
   \begin{equation}\label{liminf wolff zero}
       \varliminf_{|x|\to\infty} \mathbf{W}_{\alpha, p}\omega(x)=0.
   \end{equation}
Therefore, in view of \eqref{liminf wolff zero}, any nontrivial solution $(u,v)$ to \eqref{sistemawolff}  satisfies
   \begin{equation}\label{liminf wolf solution zero}
       \varliminf_{|x|\to\infty}u(x)=\varliminf_{|x|\to \infty}v(x)=0,
   \end{equation}
provided that $(u,v)$   enjoy the property in \eqref{estimate upper and lower}.
%%%%%%%%%%%%%%%%%%%%%%%%%%%%%%%%%%%%%%%%%%%%%%%%%%%%%%%%%%%%%%%%%%%%%%%%%%%%
%%%%%%%%%%%%%%%%%%%%%%%%%%%%%%%%%%%%%%%%%%%%%%%%%%%%%%%%%%%%%%%%%%%%%%%%%%%%%%
%%%%%%%%%%%%%%%%%%%%%%%%%%%%%%%%%%%%%%%%%%%%%%%%%%%%%%%%%%%%%%%%%%%%%%%%%%%%%%   
\subsection{Proof of Theorem \ref{solutionplaplacian general}}
First, we assume that $1<p<n$.
The argument is based on the method of successive approximations. 
Suppose \eqref{potencial finito} and \eqref{sigma abs cap_p}. Let $K$ be the constant given in Theorem \ref{constante Maly}.  By Theorem \ref{existencewollfsystem}, with $\alpha=1$ and $K^{p-1}\sigma$ in place of $\sigma$, there exists a nontrivial solution $(\tilde{u},\tilde{v})$ to the system
\begin{equation}\label{auxiliarysystem}
    \left\{
\begin{aligned}
& \tilde{u}= K\, \mathbf{W}_{1,p}(\tilde{v}^{q_1}\mathrm{d} \sigma) \quad \mbox{in}\quad \mathds{R}^n,\\ 
& \tilde{v}= K\, \mathbf{W}_{1,p}(\tilde{u}^{q_2}\mathrm{d} \sigma) \quad \mbox{in}\quad \mathds{R}^n.
\end{aligned}
\right.
\end{equation}
 Since $(\tilde{u},\tilde{v})$ satisfies
\begin{equation}\label{estimative proof p-laplacian}
     \begin{aligned}
    & c^{-1}\Big(K\,\mathbf{W}_{1,p}\sigma\Big)^{\gamma_1}\leq \tilde{u}\leq c\Big(K\,\mathbf{W}_{1,p}\sigma + \Big(K\,\mathbf{W}_{1,p}\sigma\Big)^{\gamma_1} \Big),\\
   & c^{-1}\Big(K\,\mathbf{W}_{1,p}\sigma\Big)^{\gamma_2}\leq \tilde{v}\leq c\Big(K\,\mathbf{W}_{1,p}\sigma + \Big(K\,\mathbf{W}_{1,p}\sigma\Big)^{\gamma_2} \Big),
    \end{aligned}
\end{equation} 
   using \eqref{liminf wolf solution zero}, it follows $\varliminf_{|x|\to\infty}\tilde{u}(x)=\varliminf_{|x|\to\infty}\tilde{v}(x)=0$; here $\gamma_1$ and $\gamma_2$ are as in \eqref{gamma1} and \eqref{gamma2}, respectively. From Lemma~\ref{regularidade solution wolff}, $\tilde{u}$ and  $\tilde{v}$ belong to $L_{\mathrm{loc}}^{s}(\mathds{R}^n,\mathrm{d}\sigma)$ for all $s>0$, hence by H\"{o}lder's inequality 
\begin{align*}
    \int_B\mathbf{W}_{1,p}(\tilde{v}^{q_1}\mathrm{d}\sigma)\tilde{v}^{q_1}\,\mathrm{d}\sigma&= K^{-1}\int_B\tilde{u}\,\tilde{v}^{q_1}\,\mathrm{d}\sigma\\
    & \leq K^{-1}\|\tilde{u}\|_{L^s(B,\,\mathrm{d}\sigma)}\|\tilde{v}^{q_1}\|_{L^{s'}(B,\,\mathrm{d}\sigma)}<\infty,
\end{align*}
for all ball $B\subset\mathds{R}^n$ and, similarly,
\begin{equation*}
    \int_B\mathbf{W}_{1,p}(\tilde{u}^{q_2}\mathrm{d}\sigma)\tilde{u}^{q_2}\,\mathrm{d}\sigma= K^{-1}\|\tilde{u}^{q_2}\|_{L^s(B,\, \mathrm{d}\sigma)}\|\tilde{v}\|_{L^{s'}(B,\, \mathrm{d}\sigma)}<\infty.
\end{equation*}
 Using \eqref{wolff's inequality local}, $\tilde{u}^{q_2}\mathrm{d}\sigma$, $\tilde{v}^{q_1}\mathrm{d}\sigma\in W_{\mathrm{loc}}^{-1,p'}(\mathds{R}^n)$.
By Lemma \ref{estimativainferior}, with $K^{p-1}\sigma$ in place of $\sigma$ again, there exist a constant $c_0=c_0(n,p,\alpha,q_1,q_2)>0$ such that
\begin{equation*}
\begin{aligned}
    &  \tilde{u}\geq c_0\,K^{\gamma_1}(\mathbf{W}_{1,p}\sigma)^{\gamma_1},\\
    & \tilde{v}\geq c_0\,K^{\gamma_2}(\mathbf{W}_{1,p}\sigma)^{\gamma_2}.
\end{aligned}
\end{equation*}

Set $u_0=\varepsilon\,(\mathbf{W}_{1,p}\sigma)^{\gamma_1}$ and $v_0=\varepsilon\,(\mathbf{W}_{1,p}\sigma)^{\gamma_2}$, where $\varepsilon>0$ is a small constant. Using that 
\begin{equation*}
    \gamma_2\frac{q_1}{p-1}+1=\gamma_1 \quad \mbox{and}\quad \gamma_1\frac{q_2}{p-1}+1=\gamma_2.
\end{equation*}
and taking $\varepsilon$ sufficiently small, we obtain $u_0\leq \mathbf{W}_{1,p}(v_0^{q_1}\mathrm{d}\sigma)$ and $v_0\leq\mathbf{W}_{1,p}(u_0^{q_2}\mathrm{d}\sigma)$. Moreover, choosing $\varepsilon < \min\{c\,K^{-\gamma_1},c\,K^{-\gamma_2}\}$, we also have $u_0\leq \tilde{u}$ and $v_0\leq \tilde{v}$, respectively. Setting $B_i=B(0,2^i)$, where $i=1,2,\ldots$, we deduce that $v_0^{q_1}\mathrm{d}\sigma$, $u_0^{q_2}\mathrm{d}\sigma\in W^{-1,p'}(B_i)$, since $v_0\leq\tilde{v}$, $u_0\leq \tilde{u}$ and $\tilde{u}^{q_2}\mathrm{d}\sigma, \tilde{v}^{q_1}\mathrm{d}\sigma\in W_{\mathrm{loc}}^{-1,p'}(\mathds{R}^n)$. Thus, applying \cite[Theorem~21.6]{MR2305115}, there exist unique $p$-superharmonic solutions 
$u_1^i,\, v_1^i\in W_{0}^{1,p}(B_i)$ to equations
\begin{equation*}
    \Delta_p u_1^i=\sigma\, v_0^{q_1}\quad \mbox{in}\quad B_i,\quad \Delta_p v_1^i=\sigma\, u_0^{q_2}\quad \mbox{in}\quad B_i.
\end{equation*}
Using a comparison principle, \cite[Lemma~5.1]{MR3567503}, the sequences $\{u_1^i\}_i$ and $\{v_1^i\}_i$ are increasing. 
We set $u_1=\lim_{i\to \infty}u_1^i$ and $v_1=\lim_{i\to \infty} v_1^i$. A combination of Lemma~\ref{Lemma limit}, of  weak continuity of the $p$-Laplace operator (Theorem~\ref{weak continuity p-laplacian}) and of Monotone Convergence Theorem, ensure that $u_1$ and $v_1$ are $p$-superharmonic solutions to the equations 
\begin{equation*}
 -\Delta_p u_1=\sigma\, v_0^{q_1}\quad \mbox{in}\quad \mathds{R}^n,\quad -\Delta_p v_1=\sigma\, u_0^{q_2}\quad \mbox{in}\quad \mathds{R}^n.
\end{equation*}
Applying Theorem \ref{constante Maly},
\begin{equation*}
    \begin{aligned}
        & u_1^i\leq K\,\mathbf{W}_{1,p}(v_0^{q_1}\mathrm{d}\sigma)\leq K\,\mathbf{W}_{1,p}(\tilde{v}^{q_1}\mathrm{d}\sigma)=\tilde{u},\\
        & v_1^i\leq K\,\mathbf{W}_{1,p}(u_0^{q_2}\mathrm{d}\sigma)\leq K\,\mathbf{W}_{1,p}(\tilde{u}^{q_2}\mathrm{d}\sigma)=\tilde{v},
    \end{aligned}
\end{equation*}
which implies $u_1\leq \tilde{u}$, $v_1\leq \tilde{v}$, and hence by \eqref{liminf wolf solution zero},
\begin{equation*}
    \varliminf_{|x|\to\infty} u_1(x)=\varliminf_{|x|\to \infty}v_1(x)=0.
\end{equation*}
Using the lower bound in Theorem~\ref{constante Maly} and Lemma~\ref{estimativekappa}, we obtain
\begin{align*}
u_1&\geq K^{-1}\mathbf{W}_{1,p}(v_0^{q_1}\mathrm{d}\sigma)=K^{-1}\varepsilon^{\frac{q_1}{p-1}}\mathbf{W}_{1,p}\left((\mathbf{W}_{1,p}\sigma)^{\gamma_2q_1}\mathrm{d}\sigma\right)\\
&\geq K^{-1}\varepsilon^{\frac{q_1}{p-1}} \kappa^{\frac{q_1}{p-1}\gamma_2}(\mathbf{W}_{1,p}\sigma)^{\frac{q_1}{p-1}\gamma_2+1}= K^{-1}\varepsilon^{\frac{q_1}{p-1}} \kappa^{\frac{q_1}{p-1}\gamma_2}(\mathbf{W}_{1,p}\sigma)^{\gamma_1},\\
v_1& \geq K^{-1}\mathbf{W}_{1,p}(u_0^{q_2}\mathrm{d}\sigma)=K^{-1}\varepsilon^{\frac{q_2}{p-1}}\mathbf{W}_{1,p}\left((\mathbf{W}_{1,p}\sigma)^{\gamma_1q_2}\mathrm{d}\sigma\right)\\
&\geq K^{-1}\varepsilon^{\frac{q_2}{p-1}} \kappa^{\frac{q_2}{p-1}\gamma_1}(\mathbf{W}_{1,p}\sigma)^{\frac{q_2}{p-1}\gamma_1+1}= K^{-1}\varepsilon^{\frac{q_2}{p-1}} \kappa^{\frac{q_2}{p-1}\gamma_1}(\mathbf{W}_{1,p}\sigma)^{\gamma_2},
\end{align*}
where $K$ is the constant given in Theorem~\ref{constante Maly} and  $\kappa$ is the constant given in Lemma~\ref{estimativekappa}. Hence, $c_1(\mathbf{W}_{1,p}\sigma)^{\gamma_1}\leq u_1\leq \tilde{u}$ and $\tilde{c}_1(\mathbf{W}_{1,p}\sigma)^{\gamma_2}\leq v_1\leq \tilde{v}$, where
\begin{equation*}
    c_1= K^{-1}\varepsilon^{\frac{q_1}{p-1}} \kappa^{\frac{q_1}{p-1}\gamma_2},\quad \tilde{c}_1=K^{-1}\varepsilon^{\frac{q_2}{p-1}} \kappa^{\frac{q_2}{p-1}\gamma_1}.
\end{equation*}
We notice that $v_0^{q_1}\mathrm{d}\sigma$,  $u_0^{q_2}\mathrm{d}\sigma \in W_{\mathrm{loc}}^{-1,p'}(\mathds{R}^n)$, since  $v_0\leq\tilde{v}$, $u_0\leq \tilde{u}$ and $\tilde{v}^{q_1}\mathrm{d}\sigma$, $\tilde{u}^{q_2}\mathrm{d}\sigma\in W_{\mathrm{loc}}^{-1,p'}(\mathds{R}^n)$. Thus, applying Lemma~\ref{soution in W_loc}, $u_1$, $v_1\in W_{\mathrm{loc}}^{1,p}(\mathds{R}^n)$.

By induction argument, as above, we can construct a sequence $\{(u_j,v_j)\}$ of $p$-superharmonic functions in $\mathds{R}^n$ with $u_j\in L_{\mathrm{loc}}^{q_2}(\mathds{R}^n,\mathrm{d}\sigma)$ and $v_j\in L_{\mathrm{loc}}^{q_1}(\mathds{R}^n,\mathrm{d}\sigma)$ for $j=2,3,\ldots,$ satisfying
\begin{equation}\label{u_j and v_j p-laplacian}
     \left\{
\begin{aligned}
& -\Delta_p u_j=\sigma\, v_{j-1}^{q_1}\quad \mbox{in}\quad \mathds{R}^n,\\ 
&  -\Delta_p v_j=\sigma\, u_{j-1}^{q_2}\quad \mbox{in}\quad \mathds{R}^n,\\
& c_j(\mathbf{W}_{1,p}\sigma)^{\gamma_1}\leq u_j\leq \tilde{u},\ \ \tilde{c}_j(\mathbf{W}_{1,p}\sigma)^{\gamma_2}\leq v_j\leq \tilde{v} \quad \mbox{in}\quad \mathds{R}^n,\\
& 0\leq u_{j-1}\leq u_j, \ \ 0\leq v_{j-1}\leq v_j, \quad u_j, v_j\in W_{\mathrm{loc}}^{1,p}(\mathds{R}^n), \\
& \varliminf_{|x|\to\infty} u_j(x)=\varliminf_{|x|\to \infty}v_j(x)=0,
\end{aligned}
\right.
\end{equation}
where
\begin{equation}\label{c_j and d_j}
    \begin{aligned}
    c_1 &= K^{-1}\varepsilon^{\frac{q_1}{p-1}} \kappa^{\frac{q_1}{p-1}\gamma_2}, &  \tilde{c}_1 &=K^{-1}\varepsilon^{\frac{q_2}{p-1}} \kappa^{\frac{q_2}{p-1}\gamma_1}, &\\
        c_j&=K^{-1}(\tilde{c}_{j-1}\kappa^{\gamma_2})^{\frac{q_1}{p-1}},  &\tilde{c}_j& =K^{-1}(c_{j-1}\kappa^{\gamma_1})^{\frac{q_2}{p-1}},  &\mbox{for }j=2, 3,\ldots.
    \end{aligned}
\end{equation}

\noindent Indeed, suppose that $(u_1,v_1),\ldots,(u_{j-1},v_{j-1})$ have been constructed. Since  $u_{j-1}\leq \tilde{u}$, $v_{j-1}\leq \tilde{v}$ and $\tilde{v}^{q_1}\mathrm{d}\sigma,\,\tilde{u}^{q_2}\mathrm{d}\sigma \in  {M}^{+}(\mathds{R}^n)\cap W_{\mathrm{loc}}^{-1,p'}(\mathds{R}^n)$, it follows $v_{j-1}^{q_1}\mathrm{d}\sigma,\,u_{j-1}^{q_2}\mathrm{d}\sigma \in  {M}^{+}(\mathds{R}^n)\cap W_{\mathrm{loc}}^{-1,p'}(\mathds{R}^n)$. Clearly, $v_{j-1}^{q_1}\mathrm{d}\sigma,\,u_{j-1}^{q_2}\mathrm{d}\sigma \in  W^{-1,p'}(B_i)$, consequently there exist unique $p$-superharmonic solutions $u_j^{i}$ and $v_j^i$ to the equations
\begin{equation*}
     \left\{
\begin{aligned}
& -\Delta_p u_j^i=\sigma\, v_{j-1}^{q_1}\quad \mbox{in}\quad B_i,\qquad u_j^i\in W_{0}^{1,p}(B_i),\\ 
&  -\Delta_p v_j^i=\sigma\, u_{j-1}^{q_2}\quad \mbox{in}\quad B_i,\qquad v_j^i\in W_{0}^{1,p}(B_i).
\end{aligned}
\right.
\end{equation*}
Arguing by induction, let $u_{j-1}^i$ and $v_{j-1}^i$ be the unique solutions of the equations  
\begin{equation*}
     \left\{
\begin{aligned}
& -\Delta_p u_{j-1}^i=\sigma\, v_{j-2}^{q_1}\quad \mbox{in}\quad B_i,\qquad u_{j-1}^i\in W_{0}^{1,p}(B_i),\\ 
&  -\Delta_p v_{j-1}^i=\sigma\, u_{j-2}^{q_2}\quad \mbox{in}\quad B_i,\qquad v_{j-1}^i\in W_{0}^{1,p}(B_i).
\end{aligned}
\right.
\end{equation*}
Since $v_{j-2}\leq v_{j-1}$ and $u_{j-2}\leq u_{j-1}$, it follows from the comparison principle, \cite[Lemma~5.1]{MR3567503}, that $u_{j-1}^i\leq u_{j}^i$ and $v_{j-1}^i\leq v_{j}^i$ for all $i\geq 1$. By Theorem~\ref{constante Maly}, we have
\begin{align*}
    & 0\leq u_{j}^i\leq K\,\mathbf{W}_{1,p}(v_{j-1}^{q_1}\mathrm{d}\sigma)\leq K\,\mathbf{W}_{1,p}(\tilde{v}^{q_1}\mathrm{d}\sigma)=\tilde{u} \quad \mbox{and}\\
    &  0\leq v_{j}^i\leq K\,\mathbf{W}_{1,p}(u_{j-1}^{q_1}\mathrm{d}\sigma)\leq K\,\mathbf{W}_{1,p}(\tilde{u}^{q_1}\mathrm{d}\sigma)=\tilde{v},
\end{align*}
since $v_{j-1}\leq \tilde{v}$ and $u_{j-1}\leq \tilde{u}$. Using again the comparison principle, we deduce that the sequences $\{u_j^i\}_i$ and $\{v_j^i\}_i$ are increasing. 
Thus, letting $u_j=\lim_{i\to \infty} u_j^i$ and $v_j=\lim_{i\to \infty} v_j^i$, we obtain that $u_j$ and $v_j$ are $p$-superharmonic solutions to the equations
\begin{equation*}
     \left\{
\begin{aligned}
& -\Delta_p u_j=\sigma\, v_{j-1}^{q_1}\quad \mbox{in}\quad \mathds{R}^n,\\ 
&  -\Delta_p v_j=\sigma\, u_{j-1}^{q_2}\quad \mbox{in}\quad\mathds{R}^n,
\end{aligned}
\right.
\end{equation*}
since, as before, we apply Theorem~\ref{weak continuity p-laplacian} and the Monotone Convergence Theorem. We also have $u_{j-1}\leq u_j$ and $v_{j-1}\leq v_j$, since $u_{j-1}^i\leq u_j^i$ and $v_{j-1}^i\leq v_j^i$ for all $i\geq 1$. Furthermore, $u_j\leq \tilde{u}$ and $v_j\leq \tilde{v}$, since $u_{j}^i\leq \tilde{u}$ and $v_{j}^i\leq \tilde{v}$ for all $i\geq 1$. By \eqref{liminf wolf solution zero}, we obtain
\begin{equation}\label{liminfujvj}
    \varliminf_{|x|\to\infty} u_j(x)=\varliminf_{|x|\to \infty}v_j(x)=0.
\end{equation}
Since $v_{j-1}^{q_1}\mathrm{d}\sigma$ and $u_{j-1}^{q_2}\mathrm{d}\sigma\in W_{\mathrm{loc}}^{-1,p'}(\mathds{R}^n)$, it follows from Lemma~\ref{soution in W_loc} and \eqref{liminfujvj} that $u_j$, $v_j\in W_{\mathrm{loc}}^{1,p}(\mathds{R}^n)$. Also, applying Theorem~\ref{constante Maly} and Lemma~\ref{estimativekappa}, and arguing by induction, we concluded that
\begin{align*}
    u_j&\geq K^{-1}\mathbf{W}_{1,p}(v_{j-1}^{q_1}\mathrm{d}\sigma)=K^{-1}\mathbf{W}_{1,p}\left({\tilde{c}_{j-1}}^{q_1}(\mathbf{W}_{1,p}\sigma)^{\gamma_2q_1}\mathrm{d}\sigma\right)\\
&\geq K^{-1}{\tilde{c}_{j-1}}^{\frac{q_1}{p-1}} \kappa^{\frac{q_1}{p-1}\gamma_2}(\mathbf{W}_{1,p}\sigma)^{\frac{q_1}{p-1}\gamma_2+1}= c_j(\mathbf{W}_{1,p}\sigma)^{\gamma_1}
\end{align*}
and, similarly,
\begin{align*}
    v_j&\geq K^{-1}\mathbf{W}_{1,p}(u_{j-1}^{q_2}\mathrm{d}\sigma)=K^{-1}\mathbf{W}_{1,p}\left(c_{j-1}^{q_2}(\mathbf{W}_{1,p}\sigma)^{\gamma_1q_2}\mathrm{d}\sigma\right)\\
&\geq K^{-1}c_{j-1}^{\frac{q_2}{p-1}} \kappa^{\frac{q_2}{p-1}\gamma_1}(\mathbf{W}_{1,p}\sigma)^{\frac{q_2}{p-1}\gamma_1+1}= \tilde{c}_j(\mathbf{W}_{1,p}\sigma)^{\gamma_2}.
\end{align*}

Now, we set $u=\lim_{j\to \infty}u_j$ and $v=\lim_{j\to \infty}v_j$. By Lemma~\ref{Lemma limit}, $u$ and $v$ are $p$-superharmonic functions in $\mathds{R}^n$. From Theorem~\ref{weak continuity p-laplacian} and Monotone Convergence Theorem, we deduce that $(u,v)$ is a solution to the system
\begin{equation*}
     \left\{
\begin{aligned}
& -\Delta_p u=\sigma\, v^{q_1}\quad \mbox{in}\quad \mathds{R}^n,\\ 
&  -\Delta_p v=\sigma\, u^{q_2}\quad \mbox{in}\quad\mathds{R}^n,
\end{aligned}
\right.
\end{equation*}
in the sense of \eqref{solutionsystemsense}. Moreover, $u\leq \tilde{u}$ and $v\leq \tilde{v}$, and hence $\varliminf_{|x|\to\infty} u(x)=\varliminf_{|x|\to \infty}v(x)=0$. 
Using Lemma~\ref{soution in W_loc} again, we deduce that $u$, $v\in W_{\mathrm{loc}}^{1,p}(\mathds{R}^n)$ since $v^{q_1}\mathrm{d} \sigma$, $u^{q_2}\mathrm{d}\sigma\in W_{\mathrm{loc}}^{-1,p'}(\mathds{R}^n)$. By \eqref{estimative proof p-laplacian}, there exists a constant $c=c(n,p,q_1,q_2,\alpha,C_{\sigma})>0$ such that
\begin{equation*}
    u\leq c\left(\mathbf{W}_{1,p}\sigma+(\mathbf{W}_{1,p}\sigma)^{\gamma_1}\right), \ v\leq c\left(\mathbf{W}_{1,p}\sigma+(\mathbf{W}_{1,p}\sigma)^{\gamma_2}\right),
\end{equation*}
and this shows an upper bound in \eqref{estimative upper and lower p-laplacian}. From \eqref{u_j and v_j p-laplacian}, $(u,v)$ satisfies for all $j=1,2,\ldots$ the lower bound
    \begin{equation}\label{lowerbound}
    \begin{aligned}
        &  u\geq c_j\left(\mathbf{W}_{1,p}\sigma\right)^{\gamma_1},\\
        & v\geq \tilde{c}_j\left(\mathbf{W}_{1,p}\sigma\right)^{\gamma_2},
    \end{aligned}
    \end{equation}
    Passing to limit $j\to\infty$ in \eqref{lowerbound}, with $c_j$ and $\tilde{c}_j$ given in \eqref{c_j and d_j}, we obtain
   \begin{equation*}
       \begin{aligned}
        &  u\geq C\left(\mathbf{W}_{1,p}\sigma\right)^{\gamma_1},\\
        & v\geq \tilde{C}\left(\mathbf{W}_{1,p}\sigma\right)^{\gamma_2},
    \end{aligned}
   \end{equation*}
   where, by direct computation,
    \begin{equation*}
    \begin{aligned}
        &  C=\lim_{j\to\infty}c_j=K^{-\gamma_1}\kappa^{\frac{\gamma_1q_1(\gamma_1q_2+\gamma_2(p-1))}{(p-1)^2}}, \\
        & \tilde{C}=\lim_{j\to\infty} \tilde{c}_j=K^{-\gamma_2}\kappa^{\frac{\gamma_2q_2(\gamma_2q_1+\gamma_1(p-1))}{(p-1)^2}}.
    \end{aligned}
    \end{equation*}
 This shows a lower bound in \eqref{estimate upper and lower}.

Now we prove that $(u,v)$ is a minimal solution; that is, if $(f,g)$ is any nontrivial solution to Syst.~\eqref{plaplaciansystem}, then $f\geq u$ and $g\geq v$ a.e. Let $(f,g)$ be a solution to Syst.~\eqref{plaplaciansystem}. From Theorem \ref{constante Maly}, it follows
\begin{equation*}
   \left\{ \begin{aligned}
      & f\geq K^{-1}\mathbf{W}_{1,p}(g^{q_1}\mathrm{d}\sigma)  \\
      &g\geq K^{-1}\mathbf{W}_{1,p}(f^{q_2}\mathrm{d}\sigma)
    \end{aligned}
    \right.
\end{equation*} 
Using Lemma \ref{estimativainferior} with $K^{-(p-1)}\sigma$ in place of $\sigma$, we obtain
\begin{equation*}
\begin{aligned}
    & f\geq c\,K^{-\gamma_1}(\mathbf{W}_{1,p}\sigma)^{\gamma_1}, \\
    & g\geq c\,K^{-\gamma_2}(\mathbf{W}_{1,p}\sigma)^{\gamma_2}.
\end{aligned} 
\end{equation*}
Let $\mathrm{d}\omega_1=g^{q_1}\mathrm{d}\sigma$ and $\mathrm{d}\omega_2=f^{q_2}\mathrm{d}\sigma$. Notice that by choice of $\varepsilon$ as above, we have $\mathrm{d}\omega_1\geq v_0^{q_1}\mathrm{d}\sigma$ and $\mathrm{d}\omega_2\geq u_0^{q_2}\mathrm{d}\sigma$. Applying Lemma 5.2 in \cite{MR3567503}, we deduce that the functions $u_1^i$ and $v_1^i$ defined as before satisfy in $B_i$ the inequality $u_1^i\leq f$ and $v_1^i\leq g$ for all $i\geq 1$, and consequently $u_1=\lim_{i\to \infty}u_1^i\leq f$ and $v_1=\lim_{i\to \infty}v_1^i\leq g$. Repeating this argument by induction, we conclude $u_j\leq f$ and $v_j\leq g$ for every $j\geq 1$. Therefore, 
\begin{equation*}
    u=\lim_{j\to \infty}u_j\leq f, \quad v=\lim_{j\to \infty}v_j\leq g \quad  \mbox{a.e. in }\mathds{R}^n.
\end{equation*}

%%%%%%%%%%%%%%%%%%%%%%%%%%%%%%%%%%%%%

It remains to prove that if $p\geq n$, there are no nontrivial solutions to Syst.~\eqref{plaplaciansystem} in $\mathds{R}^n$.
Let $w\in\mathcal{S}_p(\mathds{R}^n)$ be a $p$-superharmonic function in $\mathds{R}^n$ with $\varliminf_{|x|\to \infty}w(x)=0$.
By \cite[Theorem~7.48]{MR2305115}, there exists $c=c(n,p)>0$ such that
\begin{equation}\label{n less p}
    \int_{B_r}\frac{|\nabla w|^p}{w^p}\,\mathrm{d}x\leq c \,\mathrm{cap}_p(B_r), 
\end{equation}
 for all balls $B_r:=B(0,r)$. With aid of \cite[Theorem~2.2 (ii)]{MR2305115}, we infer from \cite[Theorem~2.19]{MR2305115} that $\mathrm{cap}_p(B_r)=0$ for all balls $B_r$, provided $p\geq n$. Thus, letting $r\to\infty$ in \eqref{n less p}, we obtain for $p\geq n$ that $\nabla w \equiv 0$ almost everywhere on $\mathds{R}^n$. Since $\varliminf_{|x|\to \infty}w(x)=0$, it follows $w\equiv 0$. 
This completes the proof of Theorem~\ref{solutionplaplacian general}.

%%%%%%%%%%%%%%%%%%%%%%%%%%%%%%%%%%

\subsection{Proof of Theorem \ref{solutionplaplacian general weaker}}
Suppose that there exist $p$-superharmonic functions $u$ and $v$ satisfying Syst.~\eqref{plaplaciansystem} and \eqref{estimative upper and lower p-laplacian}. By Theorem \ref{constante Maly}, 
\begin{equation*}
\left\{\begin{aligned}
    & u\geq K^{-1}\mathbf{W}_{1,p}(v^{q_1}\mathrm{d}\sigma), \\
    & v\geq K^{-1}\mathbf{W}_{1,p}(u^{q_2}\mathrm{d}\sigma),
\end{aligned}
\right.
\end{equation*}
which together with Lemma \ref{estimativainferior} implies 
\begin{equation}\label{lower bound weaker}
    \begin{aligned}
        &  u\geq c_0 \left(\mathbf{W}_{1,p}\sigma\right)^{\gamma_1}, \\
        & v\geq c_0 \left(\mathbf{W}_{1,p}\sigma\right)^{\gamma_2},
    \end{aligned}
\end{equation}
where $\gamma_1$ and $\gamma_2$ are given by \eqref{gamma1} and \eqref{gamma2} respectively.
Consequently,
\begin{equation*}
\begin{aligned}
    & u\geq c_1 \,\mathbf{W}_{1,p}\left((\mathbf{W}_{1,p}\sigma)^{\gamma_2q_1}\mathrm{d}\sigma\right),\\
    & v\geq c_2 \,\mathbf{W}_{1,p}\left((\mathbf{W}_{1,p}\sigma)^{\gamma_1q_2}\mathrm{d}\sigma\right),
\end{aligned}
\end{equation*}
where $c_1=K^{-1}{c_0}^{q_1}$ and $c_2=K^{-1}{c_0}^{q_2}$. 
Thus, in view of \eqref{estimative upper and lower p-laplacian}, it holds 
\begin{equation*}
\begin{aligned}
    & u\leq c_3\left(\mathbf{W}_{1,p}\sigma+(\mathbf{W}_{1,p}\sigma)^{\gamma_1}\right), \\
    & v\leq c_3\left(\mathbf{W}_{1,p}\sigma+(\mathbf{W}_{1,p}\sigma)^{\gamma_2}\right),
\end{aligned}
\end{equation*}
for some constant $c_3>0$. 
The previous estimates in combination with \eqref{lower bound weaker} and choosing $\lambda=c_3\max\{{c_1}^{-1}, {c_2}^{-1}\}$ yield
\begin{align*}
   & \mathbf{W}_{1,p}\left((\mathbf{W}_{1,p}\sigma)^{\gamma_2q_1} \mathrm{d}\sigma\right)\leq \lambda\left(\mathbf{W}_{1,p}\sigma + \left(\mathbf{W}_{1,p}\sigma\right)^{\gamma_1} \right) <\infty \mbox{ - a.e.},\\
   & \mathbf{W}_{1,p}\left((\mathbf{W}_{1,p}\sigma)^{\gamma_1q_2}\mathrm{d}\sigma\right)\leq \lambda\left(\mathbf{W}_{1,p}\sigma + \left(\mathbf{W}_{1,p}\sigma\right)^{\gamma_2} \right) <\infty \mbox{ - a.e.},
\end{align*}
which proves \eqref{condition necessary weaker}.

Conversely, suppose that \eqref{condition necessary weaker} holds. Let $K$ be the constant given in Theorem~\ref{constante Maly}. By Theorem \ref{existencewolffsystemlocweaker} with $\alpha=1$ and $K^{p-1}\sigma$ in place of $\sigma$, there exists a nontrivial solution $(\tilde{u},\tilde{v})$ to Syst.~\eqref{auxiliarysystem}, satisfying \eqref{estimative proof p-laplacian}. 
Using the same argument in the proof of Theorem~\ref{solutionplaplacian general}, one can complete that reciprocal of Theorem~\ref{solutionplaplacian general weaker} holds.

\subsection{Proof of Theorem \ref{thm frac system}}
As was commented previously in Sect.~\ref{intro}, since $\mathbf{I}_{2\alpha}\mu = (n-2\alpha)\mathbf{W}_{\alpha,2}\mu$ for any $\mu\in M^+(\mathds{R}^n)$, Theorem \ref{thm frac system} is a special case of Theorem \ref{existencewollfsystem} with $p=2$.

\section{Final comments}\label{final comments}

\begin{remark}
    For the case $q_1=q_2$, setting $q=q_1$, we obtain
 \begin{equation}\label{case q_1=q_2}
     \gamma_1=\gamma_2=\frac{(p-1)(p-1+q)}{(p-1)^2-q^2}=\frac{p-1}{p-1-q}.
 \end{equation}
 The argument of the proof of Theorem~\ref{solutionplaplacian general} in combination with \eqref{case q_1=q_2} implies $u_0=v_0$. Thus, by induction,
 \begin{equation*}
     u_j=v_j \quad \forall j=1,2,\ldots,
 \end{equation*}
where $(u_j,v_j)$ were given in \eqref{u_j and v_j p-laplacian} for $j=1,2,\ldots$. It follows that $u=v$ and Syst.~\eqref{plaplaciansystem} become the equation
 \begin{equation*}
      \left\{
\begin{aligned}
&  -\Delta_p u=\sigma\,u^{q} \quad \mbox{in}\quad \mathds{R}^n,\\ 
&  \varliminf_{|x|\to \infty}u(x)=0.
\end{aligned}
\right.
   \end{equation*}
Therefore, \cite[Theorem~1.2]{MR3556326} is a corollary of Theorem~\ref{solutionplaplacian general} 
for the case $q=q_1=q_2$.
\end{remark}

\begin{remark}\label{important remark}
We highlight that our main results are entirely based on the Wolff potential estimates. Thus, direct analogous theorems hold for the more general quasilinear $\mathcal{A}$-operator $\mathrm{div}\mathcal{A}(x,\nabla\cdot)$ in place of $\Delta_p\cdot$ in \eqref{kilpelainenmalequation}. Precisely, let $\mathcal{A}:\mathds{R}^n\times \mathds{R}^n\to \mathds{R}^n$ be a mapping satisfying the standard structural assumptions, which ensures the growth condition $\mathcal{A}(x,\xi)\cdot\xi\approx |\xi|^p$. This assumption guarantee
that the Wolff potential estimates like \eqref{potential estimate} hold for the system
   \begin{equation*}
     \left\{
\begin{aligned}
& -\mathrm{div}\mathcal{A}(x,\nabla u)=\sigma\, v^{q_1},\quad v> 0\quad \mbox{in}\quad \mathds{R}^n,\\ 
&   -\mathrm{div}\mathcal{A}(x,\nabla v)=\sigma\, u^{q_2},\quad u>0\quad \mbox{in}\quad\mathds{R}^n,\\
&\varliminf_{|x|\to\infty}u(x)=\varliminf_{|x|\to\infty}v(x)=0,
\end{aligned}
\right.
\end{equation*}
In such wise, we are able to conclude similar theorems  (see \cite{MR1264000, MR3311903, MR1890997} for more details).
\end{remark}

Here we indicate some questions related to this class of quasilinear problems:
\begin{enumerate}
    \item   Let $\mu_1,\,\mu_2\in M^+(\mathds{R}^n)$. Consider the counterpart \textit{inhomogeneous} to \eqref{plaplaciansystem} as follows:

\begin{equation}\label{plaplaciansysteminhomogeneous}
     \left\{
\begin{aligned}
& -\Delta_p u= \sigma\, v^{q_1}+\mu_1, & u>0 \quad \mbox{in}\quad \mathds{R}^n,&\\ 
& -\Delta_p v= \sigma\, u^{q_2}+\mu_2, & v>0 \quad \mbox{in}\quad \mathds{R}^n,&\\
& \varliminf_{|x|\to \infty} u(x)=0, & \varliminf_{|x|\to \infty} v(x)=0. &
\end{aligned}
\right.
 \end{equation}
 Does \eqref{plaplaciansysteminhomogeneous} have a minimal positive solution for any nontrivial measures $\mu_1, \mu_2$? Perhaps, new phenomena involving possible interactions between $\mu_1$, $\mu_2$, and $\sigma$ will occur. 
 
    \item Suppose $\sigma\in M^+(\mathds{R}^n)$ radially symmetric. Thus, the solution to \eqref{fractlaplaciamsystem} obtained in Theorem~\ref{thm frac system} must be radially symmetric. In addition, since $\mathbf{I}_{2\alpha}\sigma=(n-2\alpha)\mathbf{W}_{\alpha, 2}\sigma$, condition \eqref{weakercontidion} can be rewritten in terms of the Riesz potential. Is it possible to characterize condition \eqref{weakercontidion}, in terms of Riesz potential, for $\sigma$ radially symmetric, as was done to the single equation in \cite[Prop.~5.2]{MR3556326}? 
    
    \item  Let us consider the quasilinear elliptic system

\begin{equation}\label{plaplaciansystem differ}
     \left\{
\begin{aligned}
& -\Delta_{p_1} u= \sigma\, v^{q_1}, & u>0 \quad \mbox{in}\quad \mathds{R}^n,&\\ 
& -\Delta_{p_2} v= \sigma\, u^{q_2}, & v>0 \quad \mbox{in}\quad \mathds{R}^n,&\\
& \varliminf_{|x|\to \infty} u(x)=0, & \varliminf_{|x|\to \infty} v(x)=0, &
\end{aligned}
\right.
 \end{equation}  
 where $p_1,\,p_2 \in (1, n]$ and $q_1, \, q_2<\max\{p_1-1, p_2-1\}$.
    Does \eqref{plaplaciansystem differ} have a minimal positive solution? In fact, the question is related to the existence of a nontrivial solution to the following system integral 
    \begin{equation*}\label{sistemawolff differ}
    \left\{
\begin{aligned}
& u= \mathbf{W}_{\alpha_1,p_1}(v^{q_1}\mathrm{d} \sigma), \quad \mathrm{d}\sigma \mbox{-a.e in } \mathds{R}^n,\\
& v= \mathbf{W}_{\alpha_2,p_2}(u^{q_2}\mathrm{d} \sigma), \quad \mathrm{d}\sigma \mbox{-a.e in } \mathds{R}^n,
\end{aligned}
\right.
\end{equation*}
    It is to be expected that the answers should be more complicated because it will be necessary to assume a possible interaction between $\mathbf{W}_{\alpha_1,p_1}\sigma$ and $\mathbf{W}_{\alpha_2,p_2}\sigma$.
\end{enumerate}

\bigskip 

%%%%%%%%%%%%%%%%%%%%%%%%%%%%%%%%%%%%%%%%%%%%%%%%%%%%%%%%%%%%%%%%%%%%%%%%%%%%%%%%%%%%%%%%%%%%%%%%%%
	%  
%%%%%%%%%%%%%%%%%%%%%%%%%%%%%%%%%%%%%%%%%%%%%%%%%%%%%%%%%%%%%%%%%%%%%%%%%%%%%%%%%%%%%%%%%%%%%%%%%%

\begin{flushleft}
 {\bf Funding:}  
 E. da Silva acknowledges partial support  from 
	CNPq through grants 140394/2019-2 and
 J. M. do \'O acknowledges partial support from CNPq through grants 312340/2021-4, 409764/2023-0, 443594/2023-6, CAPES MATH AMSUD grant 88887.878894/2023-00
and Para\'iba State Research Foundation (FAPESQ), grant no 3034/2021.  \\
 {\bf Ethical Approval:}  Not applicable.\\
 {\bf Competing interests:}  Not applicable. \\
 {\bf Authors' contributions:}    All authors contributed to the study conception and design. Material preparation, data collection, and analysis were performed by all authors. The authors read and approved the final manuscript.\\
{\bf Availability of data and material:}  Not applicable.\\
{\bf Ethical Approval:}  All data generated or analyzed during this study are included in this article.\\
{\bf Consent to participate:}  All authors consent to participate in this work.\\
{\bf Conflict of interest:} The authors declare that they have no conflict of interest. \\
{\bf Consent for publication:}  All authors consent for publication. \\
\end{flushleft}

\bigskip

%%%%%%%%%%%%%%%%%%%%%%%%%%%%%%%%%%%%%%%%%%%%%%%%%%%%%%%%%%%%%%%%%%%%%%%%%%%%%%%%%%%%%%%%%%%%%%%%%%
	%  REFERENCES
%%%%%%%%%%%%%%%%%%%%%%%%%%%%%%%%%%%%%%%%%%%%%%%%%%%%%%%%%%%%%%%%%%%%%%%%%%%%%%%%%%%%%%%%%%%%%%%%%%

%\bibliography{ref.bib}
%\bibliographystyle{abbrv}

%%%%%%%%%%%%%%%%%%%%%%%%%%%%%%%%%%%%%%%%%%%%%%%%%%%%%%%%%%%%%%%%%%%%%%%%%%%%%%%%%%%%%%%%%%%%%%%%%%

%-----------------------------------------------------------------------------------------------------------------------
%-----------------------------------------------------------------------------------------------------------------------

    \end{document}